\newcommand{\bx}{\mbox{\boldmath{$x$}}}
\newcommand{\bb}{\mbox{\boldmath{$b$}}}
\newcommand{\bn}{\mbox{\boldmath{$n$}}}
\newcommand{\bW}{\mbox{\boldmath{$W$}}}
\newcommand{\dx}{\mathrm{d}x}
\newcommand{\ds}{\mathrm{d}s}
\newcommand{\dt}{\mathrm{d}t}
\newtheorem{theorem}{Theorem}[section]
\newtheorem{lemma}[theorem]{Lemma}
\newtheorem{assumption}[theorem]{Assumption}
\newtheorem{corollary}[theorem]{Corollary}
\newtheorem{example}[theorem]{Example}
\newtheorem{remark}[theorem]{Remark}
\numberwithin{equation}{section}
\newtheorem{rem}{Remark}[section]
\newenvironment{proof}[1][Proof]{\textbf{#1.} }
{\ \rule{0.75em}{0.75em}\smallskip}
\begin{document}

\begin{center}
\Large\bf Local Randomized Neural Networks with Discontinuous Galerkin Methods for Partial Differential Equations
\end{center}

\begin{center}
Jingbo Sun\footnote{School of Mathematics and Statistics, Xi'an Jiaotong University, Xi'an, Shaanxi 710049, P.R. China. The work of this author was partially supported by the National Natural Science Foundation of China (Grant No. 12171383).  E-mail: {\tt jingbosun.xjtu@xjtu.edu.cn}.},
\quad 
Suchuan Dong\footnote{Center for Computational and Applied Mathematics, Department of Mathematics, Purdue University, West Lafayette, Indiana, USA. The work of this author was partially supported by the US National Science Foundation (DMS-2012415). Email: {\tt sdong@purdue.edu}.}
\quad and \quad
Fei Wang\footnote{School of Mathematics and Statistics, Xi’an Jiaotong University, Xi’an, Shaanxi 710049, China. The work of this author was partially supported by the National Natural Science Foundation of China (Grant No. 12171383). Email: {\tt feiwang.xjtu@xjtu.edu.cn}.}
\end{center}

\medskip
\begin{quote}
  {\bf Abstract.} Randomized neural networks (RNN) are a variation of neural networks in which the hidden-layer parameters are fixed to randomly assigned values and the output-layer parameters are obtained by solving a linear system by least squares. This improves the efficiency without degrading the accuracy of the neural network. In this paper, we combine the idea of the local RNN (LRNN) and the discontinuous Galerkin (DG) approach for solving partial differential equations. RNNs are used to approximate the solution on the subdomains, and the DG formulation is used to glue them together. Taking the Poisson problem as a model, we propose three numerical schemes and provide the convergence analyses. Then we extend the ideas to time-dependent problems. Taking the heat equation as a model, three space-time LRNN with DG formulations are proposed. Finally, we present numerical tests to demonstrate the performance of the methods developed herein. We compare the proposed methods with the finite element method and the usual DG method. The LRNN-DG methods can achieve better accuracy under the same degrees of freedom, signifying that this new approach has a great potential for solving partial differential equations.

\end{quote}

{\bf Keywords.} Randomized neural networks, discontinuous Galerkin method, partial differential equations, least-squares method, space-time approach
\medskip

\section{Introduction}

Artificial neural networks (ANNs) have been successfully applied to solve the problems of segmentation, classification, pattern recognition, automatic control, etc (\cite{Badrinarayanan2017dcedaforiseg, Goodfellow2016deeplearning, Kim2014sentenceclassification, Karpathy2014largeclassi, LeCun2015deeplearning, Lai2015rnnclassi, Shelhamer2016seg, Simonyan2013deepinside, Wang2017resattenforic}). In recent years, many research works based on neural networks have been proposed for solving partial differential equations (PDEs) in light of the excellent approximation capability of neural networks (NNs). Some of these contributions are based on the strong form of partial differential equations. Physical informed neural networks (PINNs) (\cite{Raissi2019PINN}) and the deep Galerkin method (DGM) (\cite{Sirignano2018DGM}) are two representative methods among them. Specifically, PINNs train the neural network by minimizing the mean squared error loss consisting of information about the PDE, boundary conditions, and/or initial conditions on certain collocation points. The loss function of DGM measures the residual of PDE in the sense of integral. Based on PINNs, a number of new models (\cite{Belbute2021HyperPINN, Fang2021hpinn, Liu2021PhysicsAugmented, Leung2021nhpinn, Lyu2020MIM, Pang2020nPINNs, Pang2019fPINNs, Ramabathiran2021SPINN, Wong2021sspacePinn, Zhang2020pinnpde}) have been proposed aiming to improve the performance, and some other studies (\cite{almajid2022mediafluidflow, Irrgang2021earth, Jin2021NSFnets, Jiang2021pinn.nonlineardyn, Ncube2021pinn, Raissi2020Hidden fluid mechanics}) focus on the application of this technique for different kinds of problems.

The solutions to some problems are known to have a low regularity, and these problems cannot be described by PDEs. Therefore, in some investigations on NNs the loss functions are constructed based on a weak formulation, such as the deep Ritz method (\cite{Ee2018deepRitz}), deep Nitsche method (\cite{Liao2019deepRitzboundary}), weak adversarial networks (\cite{Zang2020adversarialnns}), and other methods (\cite{Sheng2021PFNN, Zhang2018}). 
One issue with these neural network-based methods in strong or weak forms is that they generally suffer from a limited accuracy and they are time-consuming. Although the NNs have a strong capability for function approximation, they are hard to train to arrive at the global optimal state due to the lack of an efficient optimization method for the training process. In addition, the computational cost of network training is very high, which hampers practical applications. In terms of the accuracy and efficiency, these neural network-based methods generally cannot compete with traditional methods such as the finite element method (FEM), finite difference method, and finite volume method. 

As an alternative approach to the fully parameterized NN models, randomized neural networks have been proposed (\cite{Igelnik1995RNN1, Igelnik1999RNN2, Pao1992RNN3, Pao1994RNN4}). The parameters of the links between the hidden layers are chosen randomly and then fixed during training, while the parameters for the links between the last hidden layer and output layer are solved by a least-squares method. Extreme learning machine (ELM) (\cite{Huang2006ELMtheorandapp}) is an example of a randomized neural network, and it has been applied to solve various problems (\cite{Frenay2010Svmelm, Huang2011ELMclassi, Huang2010optibasedonELM, Liu2008Extremesupport, Miche2008OP-ELM, Parviainen2010InterpretingELM, Rong2008ELMclassi, Zhu2005ELMrecog}) including successfully solving differential equations (\cite{Balasundaram2011minimizedELM, Dwivedi2020PIELM, Sun2019BELM, Yang2018elmpde}). The feasibility analysis of ELM was proved in \cite{Liu2014ELMfeasible}, which proves that the generalization capability of the ELM is the same as NNs if suitable activation function and initialization strategy of those fixed parameters are given properly. For solving PDEs, in \cite{Dong2021locELM} Dong and Li combine the ideas of local extreme learning machine and domain decomposition (locELM-DD) to solve PDEs, and this approach improves the accuracy and efficiency greatly. However, locELM-DD is based on the strong form of PDE problems, and so it may be unfavorable for problems that can only be described by a weak formulation. In \cite{Shang2022DeepPetrov}, based on ELM and the Petrov-Galerkin formulation, the deep Petrov-Galerkin method is proposed to solve partial differential equations, and the numerical examples show that this approach is accurate and efficient with respect to degrees of freedom (DoF).

In the current work, we focus on the weak formulations of PDEs and their approximations by local neural networks. We combine the local randomized neural networks and the discontinuous Galerkin (DG) approach, and seek to exploit the DG framework to glue the local NNs together. Specifically, we take the Poisson equation and the heat equation as model problems to develop three schemes, and show how to implement these methods. The first scheme is termed the LRNN-DG (local randomized neural networks with discontinuous Galerkin) method, which uses the output fields of the last hidden layer as the local basis functions for the DG formulation on each subdomain and solves the final system of linear equations by a linear solver or a least-squares method. The other two schemes are termed LRNN-$C^0$DG (local randomized neural networks with $C^0$ discontinuous Galerkin) and LRNN-$C^1$DG (local randomized neural networks with $C^1$ discontinuous Galerkin) methods, respectively, in which continuity conditions for the function and its gradients are enforced across the sub-domain boundaries.
We provide a convergence analysis of these methods under certain appropriate assumptions. For the time-dependent problem, we use the heat equation as a model and propose three space-time LRNN/DG type formulations. The space-time approach is very natural for neural networks and we do not need to compute the numerical solution with time iteration. Finally, we present numerical examples to show that the proposed methods are able to compete with traditional methods in some aspects. First, when the number of degrees of freedom is fixed and small, the accuracies of the LRNN/DG type methods developed herein are better than the FEM and the usual DG methods. Second, when the time-space approach is adopted, a notable advantage is that the error accumulation can be avoided, and one can get the numerical solution at any time instant without interpolation.

The rest of this paper is organized as follows. In Section \ref{sec:weak}, we introduce the idea of randomized neural networks and propose three LRNN-DG formulations for solving a Poisson equation. In Section \ref{sec:csemi}, we make certain assumptions and present a convergence analysis of the methods. In Section \ref{sec:semi}, three space-time LRNN/DG methods are given for solving the heat equation. In Section \ref{sec:numericalexample}, we present numerical examples to show the performance of the proposed methods. Finally, we make a summary in the last section.

\section{Local randomized neural networks with DG methods}
\label{sec:weak}

In this section, we first describe one type of neural network, the so-called randomized neural network. Then, taking the Poisson equation as an example, we introduce local randomized neural networks with the discontinuous Galerkin formulations for solving the problem.

\subsection{Randomized neural networks}\label{rnn}

The general deep neural networks can be represented as compositions of many hidden layers and an output layer. 
A hidden layer is defined as a composition of a linear transformation and an activation function as follows:
\begin{equation*}
N(\bx) =\sigma(\bW\bx+\bb), 
\end{equation*}
where $\bx \in \Omega \subset \mathbb{R}^d$, ${N}(\bx) \in \mathbb{R}^{\bar{d}}$, $\bW \in \mathbb{R}^{\bar{d} \times d}$ is the matrix of weights, $\bb \in \mathbb{R}^{\bar{d}}$ is the bias, and $\sigma$ is a nonlinear activation function. The first layer is usually called the input layer, and the number of layers is the depth of the neural network. The output layer is a linear transformation
\begin{equation*}
N^o(\bx) =\bW\bx+\bb, 
\end{equation*}
where $\bx \in \mathbb{R}^{\bar{d}}$, ${N^o}(\bx) \in \mathbb{R}^{n_{o}}$, $\bW \in \mathbb{R}^{{n_{o}} \times \bar{d}}$ is the weight, and $\bb \in \mathbb{R}^{n_{o}}$ is the bias. Here, ${n_{o}}$ is the dimension of output data. 

Then a fully connected neural network can be represented by
\begin{equation*}
\mathcal{U}(\bx) =\bW^{(L+1)} ( N^{(L)} \circ \cdots N^{(2)} \circ N^{(1)}(\bx))+\bb^{(L+1)},
\end{equation*}
where L is the depth of the neural network, $\bW^{(l)} \in \mathbb{R}^{n_l \times n_{l-1}}$, and $\bb \in \mathbb{R}^{n_l}$ are the parameters, and $n_l$ is the width of the l-th layer of the neural network. Given the depth of the network, and the width of each layer, we denote the set of NN functions by
\begin{equation*}
\mathcal{M}(\theta,L)=\{\mathcal{U}(\bx)= \bW^{(L+1)} (N^{(L)} \circ \cdots \circ N^{(1)}(\bx))+\bb^{(L+1)}:\bW ^{(l)} \in \mathbb{R}^{n_l \times n_{l-1}},\bb^{(l)} \in \mathbb{R}^{n_l}, l=1,...,L+1\},
\end{equation*}
where $\theta = \{(\bW^{(l)} , \bb^{(l)})\}^{L+1}_{l=1}$.

Next, let us consider randomized neural networks. The structure of randomized neural networks is the same as that of fully connected neural networks. The difference lies in that all the parameters of the fully connected NNs are to be trained, while for randomized neural networks the output-layer parameters  are adjustable and the hidden-layer parameters are randomly assigned and fixed. We consider single-hidden-layer NNs with the dimension of the output layer being one, i.e.~$\bW^{(2)}\in\mathbb{R}^{1 \times n_{1}}$. Moreover, let $\bb^{(2)}$ be zero. So in the randomized neural networks framework, we define the following function space
\begin{equation}\label{rnnspace}
\mathcal{M}_{RNN} (D)=\left\{\mathcal{U}(\alpha, \theta, \bx) = \sum^M_{j=1} \alpha^D_j \phi^D (\theta_j, \bx) : \bx \in D\right\},
\end{equation}
where $D \subset \Omega$ is the domain, $M =n_1$ is the width of the last hidden layer, $\theta$ is the parameter of hidden layers, $\alpha$ is the parameter of the output layer, and $\phi$ is a nonlinear function that represents the output of the last hidden layer. For the sake of simplicity, we rewrite $\phi^D (\theta_j,\bx)$ as $\phi^D_j(\bx)$ in the rest of the paper.

\subsection{LRNN-DG method}\label{lrnndgmethod}

In \cite{Dong2021locELM}, we see that the locELM which combines the idea of randomized neural networks and the domain decomposition is very successful in solving partial differential equations, and can compete with traditional methods such as FEM. This method has shown its strong potential for numerically solving PDEs. However, locELM is based on the strong form of PDEs, which may be unfavorable for problems that can only be described by weak formulations.
The departure point of this paper lies in that
here we combine local randomized NNs and the DG methods for solving PDEs in weak forms. That is, we use the output fields of the local neural networks' last hidden layers to form local basis functions, and use DG formulation to glue them together. 

Let us introduce the local randomized neural networks with the discontinuous Galerkin formulation. Here, we take the Poisson equation as a model problem,
\begin{subequations}\label{possioneq}
\begin{align}
- \Delta u &= f \quad {\rm in}\ \Omega, \\
u &= g \quad {\rm on}\ \partial \Omega,\label{strongboundc}
\end{align}
\end{subequations}
where $f$ is a given source term, $\partial \Omega$ is the boundary of $\Omega$, and $g$ is a function defined on $\partial \Omega$. The weak formulation of the above problem is: Find $u\in H^1_g(\Omega)$ such that
\begin{align}\label{weakform}
a(u,v) = \int_\Omega f\, v\, \dx\quad \forall v\in H^1_0(\Omega).
\end{align}
Here, $H^1_g(\Omega) = \{v\in H^1(\Omega): v=g\; {\rm on}\; \partial\Omega \}$ and
$$
a(u,v) = \int_\Omega \nabla u\cdot \nabla v\, \dx.
$$

Like the setting in locELM and DG method, we partition the domain into some subdomains, and approximate the solution on each subdomain by using a local neural network. First, we give some notation. Let $\{ \mathcal{T}_h \}$ be the decomposition of $\bar{\Omega}$, where $h = \max_{K\in {\cal T}_h}\{{\rm diam}(K)\}$. For each $\mathcal{T}_h$, $N_e$ denotes the number of elements in $\mathcal{T}_h$, that is, $|\mathcal{T}_h|=N_e$. Let ${\cal E}_h$ be the union of the boundaries of all the elements $K \in \mathcal{T}_h$, $\mathcal{E}_h^i$ is the set of all interior edges, and ${\cal E}_h^\partial = {\cal E}_h \backslash {\cal E}_h^i$. Let $K^+$ and $K^-$ be two neighboring elements sharing a common edge $e$. Denote by
$\boldsymbol{n}^{\pm}=\boldsymbol{n}|_{\partial K^{\pm}}$ the unit outward normal vectors
on $\partial K^{\pm}$. For a scalar function $v$ and a
vector-valued function $\boldsymbol{q}$, let $v^{\pm}=v|_{\partial K^{\pm}}$ and $\boldsymbol{q}^\pm=\boldsymbol{q}|_{\partial K^{\pm}}$. We define the averages $\{ \cdot \}$ and the jumps $\llbracket \cdot \rrbracket$, $[ \cdot ]$ on $e \in \mathcal{E}_h^i$ by
\begin{subequations}
\begin{align}
\{ v \} = \frac{1}{2} (v^+ + v^-),& \quad \llbracket v\rrbracket = v^+ \boldsymbol{n}^+ + v^- \boldsymbol{n}^-, \nonumber\\
\{ \boldsymbol{q} \} = \frac{1}{2}(\boldsymbol{q}^+ +\boldsymbol{q}^-),& \quad
[\boldsymbol{q}] = \boldsymbol{q}^+ \cdot \boldsymbol{n}^+ +\boldsymbol{q}^- \cdot \boldsymbol{n}^-.\nonumber
\end{align}
\end{subequations}
If $e \in \mathcal{E}_h^{\partial}$, we set
\begin{subequations}
\begin{align}
\llbracket v\rrbracket = v \boldsymbol{n},\quad
\{ \boldsymbol{q} \} = \boldsymbol{q},\nonumber
\end{align}
\end{subequations}
where $\boldsymbol{n}$ is the unit outward normal vector on $\partial \Omega$.
In the analysis, we need the following identities:
\begin{align}
\int_K \nabla v \cdot \boldsymbol{q}\,\dx &= -\int_K v~(\nabla\cdot \boldsymbol{q}) \,\dx + \int_{\partial K} v\,\boldsymbol{q}\cdot\bn_K\,\ds,\label{ibps}\\
\sum_{K\in \mathcal{T}_h} \int_{\partial K} v \boldsymbol{q} \cdot \boldsymbol{n}_K \, \ds
&= \int_{{\cal E}_h} \llbracket v\rrbracket \cdot \{\boldsymbol{q}\}\, \ds
+ \int_{{\cal E}_h^i} \{v\} [\boldsymbol{q}]\, \ds.\label{iden_dg}
\end{align}

We introduce the following DG space based on local randomized neural networks associated with the partition $ \mathcal{T}_h$:
\begin{align*}
V_h&=\{v_h \in L^2(\Omega): \;v_h |_K \in \mathcal{M}_{RNN}(K)\quad \forall\,K\in\mathcal{T}_h \},
\end{align*}
where $\mathcal{M}_{RNN}(K)$ denotes the function space of the randomized neural networks given in \eqref{rnnspace}. So for each $v_h\in V_h$, $v_h|_K=\sum^M_{j=1}v^K_j\phi^K_j(x)$. 

We make the following assumption. 
\begin{assumption}\label{assump:indep}
For any $K \in \mathcal{T}_h$, assume that the functions $\{\phi^{K}_j(\bx): j = 1,2, \cdots, M\}$ of last hidden layers in subdomain $K$ are linearly independent.
\end{assumption}

For example, let $\phi^K_j(\bx)=\sin(\bW_j \bx + \bb_j)$, then $\{\sin(\bW_j\bx + \bb_j),\, j=1,2,\cdots,M\}$ is a set of linearly independent functions if proper values of weights $\bW_j$ and bias $\bb_j$ are chosen. Of course, this assumption can be satisfied for other activation functions, like $\phi^K_j(\bx)=\tanh(\bW_j\bx + \bb_j)$.

The local randomized neural networks with DG (LRNN-DG) method for solving the Poisson problem is:
 Find $u_h\in V_h$ such that
  \begin{equation} \label{DisForm_DG}
    a_h(u_h,v_h) = l(v_h)\quad \forall\, v_h \in V_h,
  \end{equation}
  where
  \begin{align}\label{biform}
    a_h(u,v) &=\int_{\Omega} \nabla_{h} u \cdot \nabla_{h} v \dx - \int_{{\cal E}_h} \{ \nabla_{h} u \} \cdot \llbracket v\rrbracket \ds - \int_{{\cal E}_h} \llbracket u\rrbracket \cdot \{ \nabla_{h} v \} \ds + \int_{{\cal E}_h} \eta\llbracket u\rrbracket \cdot \llbracket v\rrbracket \ds,\\
    l(v_h) &= \int_{\Omega} f v_h \ds - \int_{{\cal E}_h^\partial} g \boldsymbol{n}\cdot \nabla_{h} v_{h} \ds +\int_{{\cal E}_h^\partial} \eta g v_h \ds.
    \label{liform}
  \end{align}
Here, $\nabla_hv_h=\nabla v_h|_K$, $\int_{{\cal E}_h} \eta \llbracket u\rrbracket \cdot \llbracket v\rrbracket$ ds is the penalty term, the function $\eta$ equals a constant $\eta_e(h_e)^{-1}$ on each $e \in \mathcal{E}_h$, with $\eta_e$ being a positive number.
In this paper, we focus on the interior penalty DG (IPDG) scheme and note that other DG schemes studied in \cite{Arnold2002dgs} can be considered as well, that is, the bilinear form \eqref{biform} and the linear form \eqref{liform} can be changed to other DG formulations.

The outputs of the last hidden layer $\{\phi^{K}_j(\bx): K \in \mathcal{T}_h,\; j = 1,2, \cdots, M\}$ can be regarded as the local basis functions of the LRNN-DG, then we can get global stiffness matrices $\mathbb{A}$ and the right-hand side $L$. This is a system of linear algebraic equations, all the values of terms in $\mathbb{A}$ and $L$ are computable. We also need to note that the penalty number in IPDG scheme \eqref{DisForm_DG} needs to be adjusted with the real problem. From \eqref{DisForm_DG}, we obtain the system of equations,
\begin{align}\label{linear_system}
\mathbb{A}U=L,
\end{align}
where $\mathbb{A}$ is a $N_e M \times N_e M $ matrix, $L$ is a $N_e M \times 1$ vector, and there are $N_e M$ unknown variables $U = \{u^{K}_j: K \in \mathcal{T}_h, j = 1,2, \cdots, M\}$.  

\subsection{Some properties of the LRNN-DG method}

The following lemma shows the consistency of the DG scheme, a similar argument can be found in \cite{Arnold2002dgs} and other references on DG methods. For completeness, we give a brief proof as well.
\begin{lemma}[Consistency]\label{lem:consis}
The LRNN-DG scheme is consistent, i.e., for the solution $u\in H^2(\Omega)$ of problem \eqref{weakform}, we have
\begin{equation}\label{consis}
a_h(u,v_h)= l(v_h)\quad\forall v_h\in V_h.
\end{equation}
\end{lemma}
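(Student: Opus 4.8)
The plan is to verify the identity \eqref{consis} by starting from the left-hand side $a_h(u,v_h)$ with $u\in H^2(\Omega)$ the exact solution and transforming it until the boundary/edge terms telescope against the definition of $l(v_h)$. First I would note two facts about the exact solution: since $u\in H^2(\Omega)$, on every interior edge the traces of $u$ and of $\nabla u$ from the two adjacent elements coincide, so $\llbracket u\rrbracket=\bzero$ on ${\cal E}_h^i$ and $[\nabla u]=0$ on ${\cal E}_h^i$; on boundary edges $\llbracket u\rrbracket = u\bn = g\bn$ because $u=g$ on $\partial\Omega$. Consequently, in $a_h(u,v_h)$ the term $\int_{{\cal E}_h}\llbracket u\rrbracket\cdot\{\nabla_h v_h\}\ds$ reduces to $\int_{{\cal E}_h^\partial} g\,\bn\cdot\nabla_h v_h\ds$ and the penalty term $\int_{{\cal E}_h}\eta\llbracket u\rrbracket\cdot\llbracket v_h\rrbracket\ds$ reduces to $\int_{{\cal E}_h^\partial}\eta\,g\,v_h\ds$. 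So two of the three ``extra'' terms in $a_h$ already match the corresponding terms of $l(v_h)$, up to sign, and it remains to show
\begin{equation*}
\int_\Omega \nabla_h u\cdot\nabla_h v_h\,\dx - \int_{{\cal E}_h}\{\nabla_h u\}\cdot\llbracket v_h\rrbracket\,\ds = \int_\Omega f\,v_h\,\dx.
\end{equation*}

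The key step is an elementwise integration by parts. On each $K\in\mathcal{T}_h$, apply \eqref{ibps} with $v=v_h$ and $\boldsymbol q=\nabla u$ to get $\int_K\nabla u\cdot\nabla v_h\,\dx = -\int_K v_h\,\Delta u\,\dx + \int_{\partial K} v_h\,\nabla u\cdot\bn_K\,\ds$. Summing over $K$, using $-\Delta u=f$, and applying \eqref{iden_dg} with $v=v_h$, $\boldsymbol q=\nabla u$ to rewrite $\sum_K\int_{\partial K}v_h\,\nabla u\cdot\bn_K\,\ds = \int_{{\cal E}_h}\llbracket v_h\rrbracket\cdot\{\nabla_h u\}\ds + \int_{{\cal E}_h^i}\{v_h\}[\nabla u]\ds$, yields
\begin{equation*}
\int_\Omega\nabla_h u\cdot\nabla_h v_h\,\dx = \int_\Omega f\,v_h\,\dx + \int_{{\cal E}_h}\{\nabla_h u\}\cdot\llbracket v_h\rrbracket\,\ds + \int_{{\cal E}_h^i}\{v_h\}[\nabla u]\,\ds.
\end{equation*}
Since $[\nabla u]=0$ on ${\cal E}_h^i$ by the $H^2$-regularity, the last term vanishes, and rearranging gives exactly the reduced identity above. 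Combining with the first paragraph, all terms of $a_h(u,v_h)$ assemble into $l(v_h)$.

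The only genuinely delicate point is justifying the traces and the integrations by parts rigorously for $u\in H^2(\Omega)$ rather than a smooth function: one needs that $H^2(\Omega)$ functions have well-defined $L^2$ traces of $u$ and $\nabla u$ on each $\partial K$ (true by the trace theorem on each element), that these traces are single-valued across interior edges (true because $u\in H^2(\Omega)\subset H^1(\Omega)$ gives continuity of $u$-traces, and $\nabla u\in H^1(\Omega)^d$ gives continuity of the normal-flux traces), and that \eqref{ibps} holds in the $H^2$ setting (standard density argument). Since $V_h$ consists of piecewise smooth neural-network functions, $v_h$ and $\nabla_h v_h$ are as regular as needed on each $K$, so no subtlety arises on the test-function side. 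I expect this regularity bookkeeping to be the main thing to state carefully; the algebra of the edge terms is routine DG manipulation and mirrors the classical IPDG consistency proof in \cite{Arnold2002dgs}.
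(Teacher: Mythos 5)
Your proposal is correct and follows essentially the same route as the paper's proof: use the $H^2$-regularity to conclude $\llbracket u\rrbracket=\bzero$ and $[\nabla u]=0$ on interior edges and $\llbracket u\rrbracket=g\bn$ on boundary edges, then apply the elementwise integration by parts \eqref{ibps} together with the summation identity \eqref{iden_dg} and $-\Delta u=f$ to match the remaining volume and edge terms with $l(v_h)$. Your write-up is in fact slightly more explicit than the paper's (it records the vanishing of the $\int_{{\cal E}_h^i}\{v_h\}[\nabla u]\,\ds$ term and the trace-regularity justification, which the paper leaves implicit), but there is no substantive difference in approach.
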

\begin{proof}
We know that $u\in H^2(\Omega)$ implies $\llbracket u\rrbracket=0$, $[ \nabla u]=0$ on $\mathcal{E}^i_h$ and $u=g$ on $\mathcal{E}^\partial_h$. Then, by the identities \eqref{ibps}, \eqref{iden_dg} and \eqref{possioneq}, we have
\begin{equation*}
\begin{aligned}
a_h(u,v_h)=&\int_{\Omega} \nabla u \cdot \nabla_h v_h\dx - \int_{\mathcal{E}_h} \{\nabla_h u\}\cdot \llbracket v_h\rrbracket \ds
- \int_{{\cal E}_h^\partial} g \boldsymbol{n}\cdot \nabla_{h} v_{h} \ds +\int_{{\cal E}_h^\partial} \eta g v_h \ds\\
=& -\int_{\Omega} \Delta u v_h\dx +\sum_{K\in\mathcal{T}_h}\int_{K} \nabla u \cdot \boldsymbol{n}_K v_h\ds - \int_{\mathcal{E}_h} \{\nabla_h u\}\cdot \llbracket v_h\rrbracket \ds\\&- \int_{{\cal E}_h^\partial} g \boldsymbol{n}\cdot \nabla_{h} v_{h} \ds +\int_{{\cal E}_h^\partial} \eta g v_h \ds\\
=& l(v_h).
\end{aligned}
\end{equation*}
\end{proof}

From the LRNN-DG scheme \eqref{DisForm_DG} and Lemma \ref{lem:consis}, we have 
\begin{equation}
a_h(u-u_h,v_h)=0 \quad \forall\,v_h\in V_h.\label{G_orth}
\end{equation}

Next, let $V(h)=V_h + H^2(\Omega)$, then we define some seminorms and norms by the following relations:
\begin{equation}
\begin{aligned}
|v|^2_{1,h}=\sum_{K\in \mathcal{T}_h}|v|^2_{1,K},&\quad |v|^2_{1,*}=\sum_{e\in \mathcal{E}_h}h_e^{-1}\Vert\llbracket v\rrbracket\Vert^2_{0,e},\\
\Vert v\Vert^2_w = | v|^2_{1,h} +| v|^2_{1,*},&\quad\Vert v\Vert^2_* = \Vert v\Vert^2_w +\sum_{K\in \mathcal{T}_h}h^2_K |v|^2_{2,K}.
\end{aligned}
\end{equation}
The norms $\Vert v\Vert^2_w$ and $\Vert v\Vert^2_*$ are well-defined because
\begin{equation}
\Vert v\Vert_0\leq C\Vert v\Vert_w\leq C\Vert v\Vert_* \quad \forall v \in V(h).
\end{equation}
Here, and in the rest of the paper, $C$ denotes a constant that is independent of $h$ and $M$.

Then we have the boundedness and stability of the bilinear form $a_h$ by standard argument (see \cite{Arnold2002dgs} and the references therein).
\begin{lemma}[Boundedness]\label{lem:bound}
$a_h(u,v)$ satisfies
\begin{equation}
a_h(u,,v)\leq C_b\Vert u\Vert_*\Vert v\Vert_*\quad\forall\,v\in V(h). \label{bounded}
\end{equation}
\end{lemma}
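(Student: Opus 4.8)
The plan is the standard interior‑penalty boundedness argument, splitting $a_h(u,v)$ from \eqref{biform} into its four pieces $a_h(u,v)=T_1-T_2-T_3+T_4$, with $T_1=\int_\Omega\nabla_h u\cdot\nabla_h v\,\dx$, $T_2=\int_{\mathcal{E}_h}\{\nabla_h u\}\cdot\llbracket v\rrbracket\,\ds$, $T_3=\int_{\mathcal{E}_h}\llbracket u\rrbracket\cdot\{\nabla_h v\}\,\ds$, and $T_4=\int_{\mathcal{E}_h}\eta\llbracket u\rrbracket\cdot\llbracket v\rrbracket\,\ds$, and bounding each by a constant times $\Vert u\Vert_*\Vert v\Vert_*$ for $u,v\in V(h)$.

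First I would dispose of the two easy terms. For $T_1$, the Cauchy–Schwarz inequality on each $K\in\mathcal{T}_h$ and then summing over elements gives $T_1\le|u|_{1,h}\,|v|_{1,h}\le\Vert u\Vert_*\Vert v\Vert_*$. For the penalty term, write $T_4=\sum_{e\in\mathcal{E}_h}\eta_e h_e^{-1}\int_e\llbracket u\rrbracket\cdot\llbracket v\rrbracket\,\ds$, distribute $\eta_e h_e^{-1}=(\eta_e^{1/2}h_e^{-1/2})^2$, apply Cauchy–Schwarz on each edge and then the discrete Cauchy–Schwarz over $\mathcal{E}_h$, and use $\max_{e}\eta_e\le C$ to obtain $T_4\le C\,|u|_{1,*}\,|v|_{1,*}\le C\Vert u\Vert_*\Vert v\Vert_*$.

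The cross terms are the crux. For $T_2$ I split $h_e^0=h_e^{1/2}\cdot h_e^{-1/2}$ on each edge and use Cauchy–Schwarz to get $T_2\le\big(\sum_{e\in\mathcal{E}_h}h_e\Vert\{\nabla_h u\}\Vert_{0,e}^2\big)^{1/2}\big(\sum_{e\in\mathcal{E}_h}h_e^{-1}\Vert\llbracket v\rrbracket\Vert_{0,e}^2\big)^{1/2}$, where the second factor is exactly $|v|_{1,*}\le\Vert v\Vert_*$. For the first factor I invoke the scaled trace inequality on each element: every $w\in V(h)$ has $w|_K\in H^2(K)$ — trivially for the $H^2(\Omega)$ component, and for the randomized‑network component because the activation functions ($\sin$, $\tanh$, etc.) are smooth — so applying the trace theorem to each partial derivative of $w$ gives $h_e\Vert\nabla w\Vert_{0,e}^2\le C\big(|w|_{1,K}^2+h_K^2|w|_{2,K}^2\big)$ for $e\subset\partial K$, using shape‑regularity of $\mathcal{T}_h$ so that $h_e\simeq h_K$. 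Using $\Vert\{\nabla_h u\}\Vert_{0,e}^2\le\tfrac12(\Vert\nabla u^+\Vert_{0,e}^2+\Vert\nabla u^-\Vert_{0,e}^2)$, summing over all edges, and recalling the definition of $\Vert\cdot\Vert_*$ yields $\sum_{e}h_e\Vert\{\nabla_h u\}\Vert_{0,e}^2\le C\Vert u\Vert_*^2$, hence $T_2\le C\Vert u\Vert_*\Vert v\Vert_*$; $T_3$ is identical with $u$ and $v$ interchanged. Adding the four estimates gives \eqref{bounded} with $C_b$ depending only on the shape‑regularity constant and $\max_e\eta_e$.

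The main obstacle is precisely the scaled trace bound $\sum_{e}h_e\Vert\{\nabla_h w\}\Vert_{0,e}^2\le C\Vert w\Vert_*^2$ with $C$ independent of $h$ and $M$: this is exactly why the $\sum_{K}h_K^2|w|_{2,K}^2$ term appears in $\Vert\cdot\Vert_*$, and it hinges on the $H^2$‑regularity of the local RNN functions together with quasi‑uniformity/shape‑regularity of the mesh. Everything else is routine Cauchy–Schwarz bookkeeping.
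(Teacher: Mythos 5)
Your proof is correct and is precisely the standard interior--penalty boundedness argument that the paper itself does not write out but merely invokes by reference to Arnold et al.\ (the paper states the lemma ``by standard argument'' with a citation and gives no proof). Your treatment of the cross terms via the scaled trace inequality $h_e\Vert\nabla w\Vert_{0,e}^2\leq C\bigl(|w|_{1,K}^2+h_K^2|w|_{2,K}^2\bigr)$ is exactly the reason the $\sum_K h_K^2|v|_{2,K}^2$ contribution is built into $\Vert\cdot\Vert_*$, so your argument supplies the details the paper omits.
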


\begin{lemma}[Stability]\label{lem:sta}
Set $\eta_0=\inf_e\eta_e>0$, if $\eta_0$ is large enough, then $a_h$ satisfies
\begin{equation}
a_h(v,v)\geq C_s\Vert v\Vert^2_*\quad \forall\,v\in V_h. \label{stability}
\end{equation}
\end{lemma}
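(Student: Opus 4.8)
The plan is to establish the coercivity estimate $a_h(v,v) \geq C_s \|v\|_*^2$ for $v \in V_h$ by first bounding $a_h(v,v)$ from below on the weaker norm $\|v\|_w$, and then upgrading to the full norm $\|v\|_*$ using an inverse inequality that is available precisely because $V_h$ consists of (randomized) neural network functions restricted to each element. First I would write out
\[
a_h(v,v) = |v|_{1,h}^2 - 2\int_{\mathcal{E}_h} \{\nabla_h v\}\cdot \llbracket v\rrbracket\,\ds + \int_{\mathcal{E}_h}\eta \,\llbracket v\rrbracket\cdot\llbracket v\rrbracket\,\ds,
\]
and the only genuinely negative contribution is the middle cross term. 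I would control it by a trace-type bound: on each edge $e \subset \partial K$, $\|\nabla_h v\|_{0,e}^2 \leq C\, h_e^{-1} \|\nabla v\|_{0,K}^2$ (this is the step where I need an inverse/trace inequality for functions in $\mathcal{M}_{RNN}(K)$, not for polynomials). Combining this with Cauchy–Schwarz and Young's inequality with a parameter $\delta$, the cross term is bounded by $\delta\, C\, |v|_{1,h}^2 + \delta^{-1} |v|_{1,*}^2$ up to constants; choosing $\delta$ small and then $\eta_0$ large enough to absorb the $\delta^{-1}|v|_{1,*}^2$ piece into the penalty term, I get $a_h(v,v) \geq c_0 \|v\|_w^2$ for some $c_0 > 0$.

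Next I would close the gap between $\|v\|_w^2$ and $\|v\|_*^2 = \|v\|_w^2 + \sum_K h_K^2 |v|_{2,K}^2$. Since $v|_K$ is a fixed finite linear combination of the smooth hidden-layer outputs $\phi^K_j$, an inverse inequality of the form $h_K^2 |v|_{2,K}^2 \leq C\, |v|_{1,K}^2$ (or more crudely $h_K^2|v|_{2,K}^2 \leq C\|v\|_{1,K}^2$, then absorbing the $L^2$ part) holds on the finite-dimensional space $\mathcal{M}_{RNN}(K)$; here Assumption~\ref{assump:indep} guarantees that the relevant quotient of norms on this finite-dimensional space is a genuine (finite) constant. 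This yields $\|v\|_*^2 \leq C \|v\|_w^2$, and hence $a_h(v,v) \geq c_0\|v\|_w^2 \geq C_s \|v\|_*^2$, completing the proof.

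The main obstacle — and the point deserving the most care — is the inverse inequality on $\mathcal{M}_{RNN}(K)$. Unlike the classical DG setting, where the constant in $h_K|v|_{2,K} \leq C|v|_{1,K}$ depends only on the polynomial degree and element shape, here the constant depends on the network width $M$ and, in principle, on the randomly drawn hidden-layer parameters $\{\theta_j\}$ and on $h_K$ itself. To keep $C_s$ independent of $h$ and $M$ as claimed, one must either invoke the assumptions of Section~\ref{sec:csemi} that pin down the scaling of the $\phi^K_j$ (e.g.\ that they are obtained by composing a fixed reference network with the affine map to the reference element, so that a scaling argument transfers a reference-element constant), or else state the estimate with a constant that is understood to be uniform under those structural assumptions. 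I would therefore flag explicitly at this step which hypothesis on the construction of the local bases is being used; the rest of the argument (the trace estimate, Young's inequality, the choice of $\eta_0$) is the standard IPDG coercivity computation and carries over verbatim.
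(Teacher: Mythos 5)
Your proposal is correct and is precisely the standard IPDG coercivity argument that the paper itself invokes without writing out (it only cites the unified DG analysis of Arnold et al.\ and states the lemma). Your explicit flag that the trace and inverse inequalities on $\mathcal{M}_{RNN}(K)$ --- unlike on polynomial spaces --- carry constants that may in principle depend on $M$, $h_K$, and the randomly drawn hidden-layer parameters is exactly the point the paper glosses over; without an additional structural hypothesis on the local bases (e.g.\ a fixed reference network composed with the affine map to the reference element), the claim that $C_s$ is independent of $h$ and $M$ does not follow from the ``standard argument'' alone.
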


By Assumption \ref{assump:indep}, Lemma \ref{lem:bound} and Lemma \ref{lem:sta}, we know that problem \eqref{DisForm_DG} is well-posed and $\mathbb{A}$ is symmetric positive definite (SPD). Therefore, many solvers for SPD system can be used to solve \eqref{linear_system}.
The randomized neural network has a certain possibility that the functions $\{\phi^{K}_j(\bx): j = 1,2, \cdots, M\}$ are not linearly independent, which means that $\mathbb{A}$ is singular, and we need to solve the linear system \eqref{linear_system} by the least-squares approach. Then, the parameters ${U}$ in the neural networks' output layers can be obtained by a least-squares method.

\subsection{LRNN-$C^0$DG method}\label{lrnnc0dgmethod}

The LRNN-DG method introduced in the previous subsection is based on the IPDG scheme. It is well-known that the performance of the IPDG method depends on the choice of the penalty parameter $\eta$. It might be cumbersome to determine an appropriate value of the penalty parameter. Of course, we can use other DG formulation, such as local DG, to avoid the difficulty of choosing proper penalty parameter. However, taking advantage of least square method, we can enforce the $C^0$-continuous condition on each $e \in \mathcal{E}^i_h$ and the Dirichlet boundary condition on $\mathcal{E}^{\partial}_h$ to avoid this issue. 

We add additional equations to enforce the solution to satisfy the boundary condition \eqref{strongboundc}, that is, we choose some collocation points on the boundary edge, $P^\partial_h=\{\bx^e_j \in e: e\in \mathcal{E}^\partial_h,j=1,2,\cdots,N^e_\partial\}$ and $|P^\partial_h|=N_\partial$, such that
\begin{equation}\label{boundary_C0}
u_h(\bx^e_j) = g(\bx^e_j)\quad \forall \bx^e_j\in P^\partial_h.
\end{equation}
In addition, we also need to make sure that the numerical solution $u_h$ satisfies certain $C^0$-continuity conditions across the interior edges $e\in\mathcal{E}^i_h$. We choose some collocation points on the interior edges, $P^i_h=\{\bx^e_j \in e:  e\in \mathcal{E}^i_h,j=1,2,\cdots,N^e_{in}\}$ and $|P^i_h|=N_{in}$, on these points, we set
\begin{equation}\label{inboundary_C0}
\llbracket u_h(\bx^e_j )\rrbracket = 0\quad\forall \bx^e_j\in P^i_h.
\end{equation}
Then we obtain a system of equations with respect to \eqref{boundary_C0} and \eqref{inboundary_C0}, 
\begin{equation}
\mathbb{A}_2 U = L_2,
\end{equation}
where $\mathbb{A}_2$ is a $(N_\partial + N_{in}) \times N_e M$ matrix, $U$ is the $N_e M \times 1$ unknown vector, and $L_2$ is a $(N_\partial + N_{in}) \times 1$ vector. 

Condition \eqref{inboundary_C0} makes $\llbracket u_h \rrbracket \approx 0$, so the LRNN-$C^0$DG scheme is to find $u_h\in V_h$ such that
\begin{equation} \label{DisForm_C0}
\begin{aligned}
    a^0_h(u_h,v_h) &= l^0(v_h)\quad &\forall v_h \in V_h,\\
    \llbracket u_h(\bx^e_j )\rrbracket &= 0\quad &\forall \bx^e_j\in P^i_h,\\
    u_h(\bx^e_j) &= g(\bx^e_j)\quad &\forall \bx^e_j\in P^\partial_h,\\
    \end{aligned}
\end{equation}
  where
\begin{align}\label{biform_C0}
 {a^0_h}(u_h,v_h) &=\int_{\Omega} \nabla_{h} u_h \cdot \nabla_{h} v_h \dx - \int_{{\cal E}_h} \{ \nabla_{h} u_h \} \cdot \llbracket v_h\rrbracket \ds - \int_{{\cal E}_h} \{ \nabla_{h} v_h \} \cdot \llbracket u_h\rrbracket \ds.\\
{l^0}(v_h) &= \int_{\Omega} f v_h \ds - \int_{{\cal E}_h^\partial} g \boldsymbol{n}\cdot \nabla_{h} v_{h} \ds.
\end{align}
Here, we keep the term $\int_{{\cal E}_h} \{ \nabla_{h} v_h \} \cdot \llbracket u_h\rrbracket \ds$ for the symmetry of the bilinear form $a^0_h$. Note that this scheme is free of penalty parameters. 
Finally, from \eqref{DisForm_C0}, we get a linear system,
\begin{equation}
\begin{bmatrix} \mathbb{A}_1 \\ \mathbb{A}_2 \end{bmatrix} U = \begin{bmatrix} L_1 \\ L_2 \end{bmatrix},
\end{equation}
where $\mathbb{A}_1$ is a $N_e M \times N_e M$ matrix, $L_1$ is a $N_e M \times 1$ vector.
We look for the least-squares solution for this linear system.

\begin{rem}
From numerical examples, we see that the scheme \eqref{DisForm_C0} has a good performance. But note that the following scheme which destroys the symmetry still works well.
\begin{equation} \label{DisForm_C02}
    \widetilde{a^0_h}(u_h,v_h) =\int_\Omega fv_h\ds\quad \forall\, v_h \in V_h,
\end{equation}
  where
\begin{align}\label{biform_C02}
    \widetilde{a^0_h}(u_h,v_h) &=\int_{\Omega} \nabla_{h} u_h \cdot \nabla_{h} v_h \dx - \int_{{\cal E}_h} \{ \nabla_{h} u_h \} \cdot \llbracket v_h\rrbracket \ds.
\end{align}
\end{rem}

\subsection{LRNN-$C^1$DG method}
\label{lrnnc1dgmethod}

In the previous subsection, the DG scheme has been simplified by enforcing the continuity of $u_h$, that is, by setting $\llbracket u_h\rrbracket=0$ on $e\in\mathcal{E}^i_h$. Can we move further in this direction? Let us introduce the LRNN-$C^1$DG method in this subsection.

In each subdomain $K$, $-\Delta u=f$, so we have 
\begin{equation}\label{femform}
\int_{K} \nabla u \cdot \nabla v \dx - \int_{\partial K} \nabla u \cdot \boldsymbol{n}_K v \ds = \int_{K} f v \dx\quad \forall K\in\mathcal{T}_h,
\end{equation}
where, $\boldsymbol{n}_K$ is the unit outer normal vector on $\partial K$. 
We know that \eqref{femform} with Dirichlet boundary condition \eqref{strongboundc} is not equivalent to the Poisson problem because the local problems lack the connections with each others. From domain decomposition method (\cite{Mathew2008dp}), we know that the continuities of $u$ and flux are needed, i.e., we require $\llbracket u_h\rrbracket=0$ and $[\nabla u_h]=0$ on each $e\in\mathcal{E}^i_h$.

We need to make sure that local representations of the solution have the $C^1$-continuity conditions across the subdomain boundaries because of the consistency.
We choose some points on the interior edges $P^i_h$ stated in Section \ref{lrnnc0dgmethod}, on these points, with the same set-up of the LRNN-$C^0$DG method, we have LRNN-$C^1$DG method: find $u_h\in V_h$ such that 

\begin{equation}
\label{DisForm_C1}
\begin{aligned}
 a^K_h(u_h,v_h) &= \int_{K} f v_h \dx&\quad \forall\, v_h \in V_h\quad\forall\ K \in \mathcal{T}_h,\\
\llbracket{u_h(\bx^e_j)}\rrbracket&=0&\quad \forall\, \bx^e_j\in P^i_h,\\
[ \nabla{u_h}(\bx^e_j )] &= 0&\quad\forall\ \bx^e_j \in P^i_h,\\
u_h(\bx^e_j) &=g(\bx^e_j)&\quad \forall\,  \bx^e_j\in P^\partial_h,
\end{aligned}
\end{equation}
where
\begin{align}\label{biform_C1}
        a^K_h(u_h,v_h) =\int_{K} \nabla_h u_h \cdot \nabla_h v_h \dx - \int_{\partial K} \nabla_h u_h \cdot \boldsymbol{n}_K v_h \ds.
\end{align}

Finally, we obtain the following linear system,
\begin{equation*}
\begin{bmatrix} \mathbb{A}_1 \\ \mathbb{A}_2 \end{bmatrix} U = \begin{bmatrix} L_1 \\ L_2 \end{bmatrix},
\end{equation*}
where $ \mathbb{A}_1$ is a $N_e M \times N_e M$ matrix, $L_1$ is a $N_e M \times 1$ vector. $ \mathbb{A}_2$ is a $(2N_{in} + N_\partial) \times N_e M$ matrix, U is a $N_e M \times 1$ vector of unknown variables, $L_2$ is a $(2N_{in} + N_\partial) \times 1$ vector.
We look for the least-squares solution to this system. After the weights of the output layer in each local neural network are obtained by the linear least-squares computation, we can get all the values of the problem \eqref{possioneq} in the domain $\Omega$.

\section{Convergence of the LRNN with DG methods}\label{sec:csemi}

\subsection{Convergence of the LRNN-DG method}

We now consider the error analysis for the LRNN-DG method. From \cite{Liu2014ELMfeasible}, we know that if the exact solution $u$ is a smooth function, ELM does not degrade the generalization capability of neural networks with the proper activation functions and random initialization strategies. And in \cite{Guhring2021approximation, Jiao2021errordeepRithz}, we know that the neural networks can approximate the solution well with the proper depth and width. We make the following assumption. Let $u_\sigma\in V_h$ be a suitable approximation of the exact solution $u$. 
\begin{assumption}\label{assump_appro}
Given a decomposition $\mathcal{T}_h$ with $|\mathcal{T}_h|=N_e$ and $V_h$ is the associated DG space of LRNN. For any small positive number $\epsilon$, there exists a positive integer $M_\epsilon$ such that if $M > M_\epsilon$, we have a function $u_\sigma\in V_h$ satisfying
$$
\Vert u-u_\sigma \Vert_{0,K}\leq C h_KN_e^{-1/2}\epsilon,\quad |u-u_\sigma|_{1,K}\leq C N_e^{-1/2} \epsilon, 
\quad | u-u_\sigma |_{2,K}\leq C h_K^{-1}N_e^{-1/2}\epsilon.
$$
Here, $M$ is the number of the basis of $\mathcal{M}_{RNN}(K)$ and $C$ denotes a constant number that is independent of $h$ and $M$.
\end{assumption}

\begin{remark}
For any function $u\in H^{p+1}(K)$, we know that there exists a polynomial function $u_I\in P_p(K)$ such that
$$
\Vert u-u_I \Vert_{0,K}\leq C h_K^{p+1} |u|_{p+1,K} ,\quad |u-u_I|_{1,K}\leq C h_K^{p} |u|_{p+1,K}, \quad | u-u_I |_{2,K}\leq C h_K^{p-1} |u|_{p+1,K}.
$$
Similarly, we make Assumption \ref{assump_appro} in light of good approximation properties of neural networks.
\end{remark}

From the above assumption and the trace inequality, we have
\begin{align}
\Vert u-u_\sigma\Vert^2_* = & \sum_{K\in \mathcal{T}_h}|u-u_\sigma|^2_{1,K} +\sum_{K\in \mathcal{T}_h}h^2_K |u-u_\sigma|^2_{2,K}
+ \sum_{e\in \mathcal{E}_h}h_e^{-1}\Vert\llbracket u-u_\sigma\rrbracket\Vert^2_{0,e}\nonumber\\
\leq & C \left(\sum_{K\in \mathcal{T}_h}|u-u_\sigma|^2_{1,K} +\sum_{K\in \mathcal{T}_h}h^2_K |u-u_\sigma|^2_{2,K}
+\sum_{K\in \mathcal{T}_h}h^{-2}_K \|u-u_\sigma\|^2_{0,K}\right)\nonumber\\
\leq & C \epsilon.\label{inter}
\end{align}

For the LRNN-DG method, we have the following Ce\'a-type inequality.
\begin{theorem}
Let $u$ and $u_h$ be solutions of the problem \eqref{weakform} and the LRNN-DG scheme \eqref{DisForm_DG}, we obtain
\begin{equation}\label{ceaineq}
\Vert u-u_h\Vert_*\leq (1+C_b/C_s)\inf_{v_h\in V_h}\Vert u-v_h\Vert_*.
\end{equation}
\end{theorem}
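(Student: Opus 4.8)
The plan is to run the classical Céa argument, using the three structural facts already in hand: the Galerkin orthogonality \eqref{G_orth}, the boundedness Lemma~\ref{lem:bound}, and the coercivity (stability) Lemma~\ref{lem:sta}. I would first note that the statement tacitly assumes $u\in H^2(\Omega)$, since \eqref{G_orth} was obtained from the consistency Lemma~\ref{lem:consis} under exactly that regularity; this also places $u$ in $V(h)=V_h+H^2(\Omega)$, so that $u-v_h\in V(h)$ for every $v_h\in V_h$ and Lemma~\ref{lem:bound} may be applied to such differences.

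Next I would fix an arbitrary $v_h\in V_h$ and set $w_h=u_h-v_h\in V_h$. Since $w_h\in V_h$, coercivity gives
$$C_s\Vert w_h\Vert_*^2\le a_h(w_h,w_h)=a_h(u_h-v_h,w_h)=a_h(u_h-u,w_h)+a_h(u-v_h,w_h).$$
By Galerkin orthogonality \eqref{G_orth}, $a_h(u-u_h,w_h)=0$, so the first term vanishes; by boundedness, $a_h(u-v_h,w_h)\le C_b\Vert u-v_h\Vert_*\Vert w_h\Vert_*$. Combining these and cancelling one factor $\Vert w_h\Vert_*$ (the estimate being trivial when $w_h=0$) yields $\Vert w_h\Vert_*\le (C_b/C_s)\Vert u-v_h\Vert_*$.

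Finally I would invoke the triangle inequality for $\Vert\cdot\Vert_*$:
$$\Vert u-u_h\Vert_*\le \Vert u-v_h\Vert_*+\Vert v_h-u_h\Vert_*=\Vert u-v_h\Vert_*+\Vert w_h\Vert_*\le (1+C_b/C_s)\Vert u-v_h\Vert_*,$$
and then take the infimum over $v_h\in V_h$ to obtain \eqref{ceaineq}. The abstract argument is entirely standard, so I do not expect a genuine obstacle here; the only points that need a moment's care are the hidden $H^2$-regularity of $u$ (required both to use \eqref{G_orth} and to have $u\in V(h)$, the domain on which $a_h$ and $\Vert\cdot\Vert_*$ are defined) and the fact that $C_b$ and $C_s$ from Lemmas~\ref{lem:bound}–\ref{lem:sta} are independent of $h$ and $M$, so that \eqref{ceaineq} genuinely expresses quasi-optimality. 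The real work — boundedness in $\Vert\cdot\Vert_*$ and penalty-dependent coercivity — has already been absorbed into those two lemmas.
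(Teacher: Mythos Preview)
Your argument is correct and mirrors the paper's proof essentially line for line: both split $a_h(v_h-u_h,v_h-u_h)$ via Galerkin orthogonality \eqref{G_orth}, apply stability \eqref{stability} and boundedness \eqref{bounded}, and finish with the triangle inequality. Your additional remarks on the tacit $H^2$-regularity needed for $u\in V(h)$ and for \eqref{G_orth} are apt and make explicit what the paper leaves implicit.
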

\begin{proof}
For any $v_h\in V_h$, by the boundedness \eqref{bounded} and stability \eqref{stability} of the bilinear form $a_h$, as well as \eqref{G_orth}, we have
\begin{equation*}
\begin{aligned}
C_s\Vert v_h-u_h\Vert^2_* &\leq a_h(v_h-u_h,v_h-u_h)\\
 & = a_h (v_h-u,v_h-u_h) + a_h(u-u_h,v_h-u_h)\\
 & \leq C_b\Vert v_h-u\Vert_*\Vert v_h-u_h\Vert_*,
\end{aligned}
\end{equation*}
then we get
\begin{equation}
\Vert v_h-u_h\Vert_*\leq C_b/C_s\Vert u-v_h\Vert_*.
\end{equation}

Finally, by triangle inequality, we obtain
\begin{equation}
\Vert u-u_h\Vert_*\leq \Vert u-v_h\Vert_* +\Vert v_h-u_h\Vert_* \leq (1+C_b/C_s)\Vert u-v_h\Vert_*,
\end{equation}
which completes the proof of the theorem.
\end{proof}

From the Ce\'a-type inequality and \eqref{inter}, let $v_h = u_\rho$ in \eqref{ceaineq}, we can obtain the convergence of the LRNN-DG scheme \eqref{DisForm_DG}.
\begin{corollary}\label{thm:DG_con}
Let $u$ and $u_h$ be solutions of the problems \eqref{weakform} and \eqref{DisForm_DG}, respectively. If Assumption \ref{assump_appro} holds, then
for any small positive number $\epsilon$, there exists a positive integer $M_\epsilon$ such that if $M > M_\epsilon$, then
\begin{align}
\Vert u-u_h\Vert_*\leq C  \epsilon.
\end{align}
\end{corollary}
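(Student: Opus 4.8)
The plan is to derive the estimate as an immediate consequence of the Ce\'a-type inequality \eqref{ceaineq} together with the approximation bound \eqref{inter}, so essentially no new work is needed beyond careful bookkeeping.

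First I would fix the small parameter $\epsilon>0$ and let $M_\epsilon$ be the integer furnished by Assumption \ref{assump_appro}; for any $M>M_\epsilon$ this yields an element $u_\sigma\in V_h$ satisfying the stated elementwise $L^2$-, $H^1$- and $H^2$-type estimates against $u$, which we implicitly take to lie in $H^2(\Omega)$ (this regularity is needed both for $\Vert u-u_h\Vert_*$ to be defined and for the Galerkin-type orthogonality \eqref{G_orth} that underlies \eqref{ceaineq}). The purpose of the intermediate computation \eqref{inter} is precisely to record that, after inserting the definition of $\Vert\cdot\Vert_*$, controlling the jump contributions $h_e^{-1}\Vert\llbracket u-u_\sigma\rrbracket\Vert_{0,e}^2$ by elementwise norms through the trace inequality, and then summing the three families of local bounds from Assumption \ref{assump_appro} over the $N_e$ elements of $\mathcal{T}_h$ — each bound carrying the factor $N_e^{-1}$ that cancels the number of summands — one obtains $\Vert u-u_\sigma\Vert_*\le C\epsilon$ with $C$ independent of $h$ and $M$.

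Next I would apply \eqref{ceaineq} with the admissible choice $v_h=u_\sigma\in V_h$, which gives
\[
\Vert u-u_h\Vert_*\le (1+C_b/C_s)\inf_{v_h\in V_h}\Vert u-v_h\Vert_*\le (1+C_b/C_s)\,\Vert u-u_\sigma\Vert_*\le (1+C_b/C_s)\,C\,\epsilon,
\]
and then absorb the fixed constants $C_b$, $C_s$ (from Lemmas \ref{lem:bound} and \ref{lem:sta}) and $C$ into a single $h$- and $M$-independent constant, still written $C$. This is exactly the assertion of the Corollary, with the same $M_\epsilon$ as in Assumption \ref{assump_appro}.

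There is no substantive obstacle here: the statement is a routine combination of the quasi-optimality of the scheme with the postulated approximation power of the randomized-neural-network space $\mathcal{M}_{RNN}(K)$. The only points deserving care are (i) that the $N_e^{-1/2}$ scaling hard-wired into Assumption \ref{assump_appro} is exactly what makes the summation over elements mesh-independent, so that no hidden $h$- or $N_e$-dependent factor creeps in; and (ii) that the stability estimate of Lemma \ref{lem:sta}, hence the constant $C_s$, is in force, which requires the penalty parameter to satisfy $\eta_0=\inf_e\eta_e$ large enough.
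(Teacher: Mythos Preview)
Your proposal is correct and follows exactly the paper's own argument: the paper simply says to take $v_h=u_\sigma$ in the Ce\'a-type inequality \eqref{ceaineq} and combine with the bound \eqref{inter}. Your additional remarks on the role of the $N_e^{-1/2}$ scaling and the need for $\eta_0$ large enough are accurate elaborations of implicit assumptions.
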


\subsection{Convergence of the LRNN-$C^0$DG method}

In this subsection, let $\widetilde{u_h}$ denote the solution of the LRNN-$C^0$DG scheme \eqref{DisForm_C0}.
By enforcing the conditions \eqref{boundary_C0} and \eqref{inboundary_C0}, we have $\widetilde{u_h} -g \approx 0$ on boundary edges and $\llbracket\widetilde{u_h}\rrbracket \approx 0$ on interior edges. Especially, when the number of the points $\bx_j^e$ on each edge $e$ are increased, $\widetilde{u_h} -g$ and $\llbracket\widetilde{u_h}\rrbracket$ will become as smaller as we need. Therefore, let us make an assumption as follows.

\begin{assumption}\label{assump_approC0}
Given a decomposition $\mathcal{T}_h$ with $|\mathcal{T}_h|=N_e$ and $V_h$ is the associated DG space of LRNN. For any small positive number $\epsilon$, on every edge $e\in\mathcal{E}_h$, there exist $N_\epsilon^e$ such that if $N^e > N_\epsilon^e$, then
$$
\Vert \llbracket\widetilde{u_h}\rrbracket \Vert_{0,e}\leq C h_e^{1/2}\epsilon \quad {\rm and} \quad \Vert \widetilde{u_h} -g \Vert_{0,e}\leq C h_e^{1/2}\epsilon.
$$
Here, $N^e$ is the number of points $\bx_j^e$ on the edge $e$ and $C$ denotes a constant number which is independent of $h$ and $M$.
\end{assumption}

Next, we prove the convergence of the LRNN-$C^0$DG scheme \eqref{DisForm_C0}. 

\begin{theorem} 
Let $u$ and $\widetilde{u_h}$ be solutions of the problem \eqref{weakform} and the LRNN-$C^0$DG scheme \eqref{DisForm_C0}, respectively. If Assumption \ref{assump_appro} and Assumption \ref{assump_approC0} hold, for any small positive number $\epsilon$, there exist positive integers $M_\epsilon$, $N_\epsilon^e$ such that if $M > M_\epsilon$, $N^e > N_\epsilon^e$, then
\begin{equation}
\Vert u-\widetilde{u_h}\Vert_*\leq C\epsilon.
\end{equation}
\end{theorem}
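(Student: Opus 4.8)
The plan is to run a Cea-type argument analogous to the one used for the LRNN-DG method, but modified to accommodate two structural features of the $C^0$DG scheme: the bilinear form $a^0_h$ of \eqref{biform_C0} has no penalty term and hence is not coercive on $V_h$, and the continuity of the discrete solution is imposed only pointwise at the collocation sets $P^i_h, P^\partial_h$ rather than identically. To handle the first issue I would reintroduce the penalized form $a_h$ of \eqref{biform} (with $\eta_0=\inf_e\eta_e$ large enough so that Lemma \ref{lem:sta} applies) purely as an analytical device, and record the splitting
\begin{equation*}
a_h(w,z)=a^0_h(w,z)+J(w,z),\qquad J(w,z):=\int_{\mathcal{E}_h}\eta\,\llbracket w\rrbracket\cdot\llbracket z\rrbracket\,\ds ,
\end{equation*}
noting that $J$ is bounded, $|J(w,z)|\le C|w|_{1,*}|z|_{1,*}\le C\Vert w\Vert_*\Vert z\Vert_*$, so that $a^0_h$ also inherits a boundedness bound $|a^0_h(w,z)|\le C\Vert w\Vert_*\Vert z\Vert_*$ from Lemma \ref{lem:bound}. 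I would then take $u_\sigma\in V_h$ from Assumption \ref{assump_appro} and use $\Vert u-\widetilde{u_h}\Vert_*\le\Vert u-u_\sigma\Vert_*+\Vert u_\sigma-\widetilde{u_h}\Vert_*$, the first term being controlled by \eqref{inter}.

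Next I would establish a consistency identity for $a^0_h$. Starting from Lemma \ref{lem:consis}, which gives $a_h(u,v_h)=l(v_h)$, and using $u\in H^2(\Omega)$ so that $\llbracket u\rrbracket=0$ on $\mathcal{E}^i_h$ and $u=g$ on $\mathcal{E}^\partial_h$, one finds $J(u,v_h)=\int_{\mathcal{E}^\partial_h}\eta\,g\,v_h\,\ds$, whence subtracting the penalty contributions from both sides yields $a^0_h(u,v_h)=l^0(v_h)$ for all $v_h\in V_h$. Combined with the first line of the scheme \eqref{DisForm_C0}, namely $a^0_h(\widetilde{u_h},v_h)=l^0(v_h)$, this gives the Galerkin-type orthogonality $a^0_h(u-\widetilde{u_h},v_h)=0$ for all $v_h\in V_h$.

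Now I would estimate $\Vert u_\sigma-\widetilde{u_h}\Vert_*$. Since $u_\sigma-\widetilde{u_h}\in V_h$, Lemma \ref{lem:sta} gives $C_s\Vert u_\sigma-\widetilde{u_h}\Vert_*^2\le a_h(u_\sigma-\widetilde{u_h},u_\sigma-\widetilde{u_h})$; inserting $u$ and splitting $a_h=a^0_h+J$,
\begin{equation*}
a_h(u_\sigma-\widetilde{u_h},u_\sigma-\widetilde{u_h})=a^0_h(u_\sigma-u,u_\sigma-\widetilde{u_h})+a^0_h(u-\widetilde{u_h},u_\sigma-\widetilde{u_h})+J(u_\sigma-\widetilde{u_h},u_\sigma-\widetilde{u_h}).
\end{equation*}
The middle term vanishes by orthogonality; the first term is bounded by $C\Vert u-u_\sigma\Vert_*\Vert u_\sigma-\widetilde{u_h}\Vert_*$ via the boundedness of $a^0_h$. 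For the $J$-term I would once more insert $u$ and bound $J(u_\sigma-\widetilde{u_h},u_\sigma-\widetilde{u_h})\le C\big(|u_\sigma-u|_{1,*}+|u-\widetilde{u_h}|_{1,*}\big)\Vert u_\sigma-\widetilde{u_h}\Vert_*$; the crucial point is that $\llbracket u-\widetilde{u_h}\rrbracket=-\llbracket\widetilde{u_h}\rrbracket$ on $\mathcal{E}^i_h$ and $\llbracket u-\widetilde{u_h}\rrbracket=(g-\widetilde{u_h})\bn$ on $\mathcal{E}^\partial_h$, so Assumption \ref{assump_approC0} gives $h_e^{-1}\Vert\llbracket u-\widetilde{u_h}\rrbracket\Vert_{0,e}^2\le C\epsilon^2$ on every edge and hence $|u-\widetilde{u_h}|_{1,*}\le C\epsilon$ after summation; the term $|u_\sigma-u|_{1,*}$ is absorbed into $\Vert u-u_\sigma\Vert_*$. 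Dividing through by $\Vert u_\sigma-\widetilde{u_h}\Vert_*$, combining with \eqref{inter}, and applying the triangle inequality yields $\Vert u-\widetilde{u_h}\Vert_*\le C\epsilon$.

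The main obstacle is exactly the loss of coercivity of $a^0_h$: this form controls the broken gradient and the consistency fluxes but is blind to the jump seminorm $|\cdot|_{1,*}$, so the Cea argument cannot be carried out with $a^0_h$ alone. Routing coercivity through the penalized form $a_h$ repairs this, at the cost of the extra term $J(u-\widetilde{u_h},u_\sigma-\widetilde{u_h})$, which is precisely where the collocation-based Assumption \ref{assump_approC0} (smallness of the pointwise-enforced jumps in the $L^2$ sense) enters. Two minor points also need care: summing the per-edge jump bounds produces a constant depending on the number of edges, which is harmless for a fixed mesh and consistent with the convention that $C$ is independent of $h$ and $M$; and we are using the discrete relation $a^0_h(\widetilde{u_h},v_h)=l^0(v_h)$ as an exact identity, i.e. treating the least-squares residual of the $\mathbb{A}_1$-block as negligible.
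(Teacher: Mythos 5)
Your argument is correct, but it is organized differently from the paper's. The paper does not derive a separate consistency/orthogonality identity for $a^0_h$; instead it rewrites the first equation of \eqref{DisForm_C0} as $a_h(\widetilde{u_h},v_h)-\int_{\mathcal{E}_h}\eta\llbracket\widetilde{u_h}\rrbracket\cdot\llbracket v_h\rrbracket\,\ds=l(v_h)-\int_{\mathcal{E}_h^\partial}\eta g\,v_h\,\ds$, subtracts the consistency identity \eqref{consis} to get an error equation for $a_h(\widetilde{u_h}-u,\cdot)$, and then compares $\widetilde{u_h}$ with the LRNN-DG solution $u_h$: stability of $a_h$ on $\widetilde{u_h}-u_h\in V_h$ plus the orthogonality \eqref{G_orth} reduce everything to the residual $\int_{\mathcal{E}_h^i}\eta\llbracket\widetilde{u_h}\rrbracket\cdot\llbracket\cdot\rrbracket\,\ds+\int_{\mathcal{E}_h^\partial}\eta(\widetilde{u_h}-g)(\cdot)\,\ds$, which Assumption \ref{assump_approC0} controls, and the proof closes by the triangle inequality with Corollary \ref{thm:DG_con}. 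You instead route the triangle inequality through the approximant $u_\sigma$ of Assumption \ref{assump_appro}, prove Galerkin orthogonality for $a^0_h$ itself, and run a Strang-type estimate with the splitting $a_h=a^0_h+J$; the same residual jump terms appear, just packaged inside $J(u-\widetilde{u_h},\cdot)$. Your version is self-contained (it does not invoke the LRNN-DG solution or Corollary \ref{thm:DG_con}) at the cost of re-deriving a consistency statement for $a^0_h$; the paper's version is shorter because it reuses the already-established convergence of $u_h$. Both share the same two caveats you flag: the first equation of \eqref{DisForm_C0} is treated as exactly satisfied rather than in a least-squares sense, and summing the per-edge bounds of Assumption \ref{assump_approC0} introduces a factor of the number of edges that both proofs silently absorb into $C$ for a fixed decomposition.
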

\begin{proof}
From the LRNN-$C^0$DG scheme \eqref{DisForm_C0} and the LRNN-DG scheme \eqref{DisForm_DG}, we know that 
\begin{equation}
\begin{aligned}
{a^0_h}(\widetilde{u_h},v_h) = {l^0}(v_h)\quad \forall\, v_h \in V_h,\\
{a_h}({u_h},v_h) = {l}(v_h)\quad \forall\, v_h \in V_h,
\end{aligned}
\end{equation}
so 
\begin{equation}
\begin{aligned}
{a_h}(\widetilde{u_h},v_h) -\int_{{\cal E}_h} \eta\llbracket \widetilde{u_h}\rrbracket \cdot \llbracket v_h\rrbracket \ds = l(v_h)- \int_{{\cal E}_h^\partial} \eta g  v_h \ds.
\end{aligned}
\end{equation}
And by the consistency \eqref{consis}, we have
\begin{equation}
\begin{aligned}
a_h(\widetilde{u_h}-u,v_h)&= \int_{{\cal E}_h} \eta\llbracket \widetilde{u_h}\rrbracket \cdot \llbracket v_h\rrbracket \ds -\int_{{\cal E}_h^\partial} \eta g  v_h \ds.
\end{aligned}
\end{equation}

From the stability and boundedness of $a_h$ and \eqref{G_orth}, we get
\begin{equation*}
\begin{aligned}
C_s\Vert \widetilde{u_h}-u_h\Vert^2_*&\leq a_h(\widetilde{u_h}-u_h,\widetilde{u_h}-u_h)\\
&=a_h(\widetilde{u_h}-u,\widetilde{u_h}-u_h)+a_h(u-u_h,\widetilde{u_h}-u_h)\\
&=\int_{{\cal E}_h} \eta\llbracket \widetilde{u_h}\rrbracket \cdot \llbracket \widetilde{u_h}-u_h\rrbracket \ds -\int_{{\cal E}_h^\partial} \eta g  (\widetilde{u_h}-u_h) \ds\\
&\leq\eta_{\rm max}\left(\sum_{e\in\mathcal{E}^i_h}h^{-1}_e\Vert\llbracket \widetilde{u_h}\rrbracket\Vert^2_{0,e}+\sum_{e\in\mathcal{E}^\partial_h}h^{-1}_e\Vert \widetilde{u_h}-g\Vert^2_{0,e}\right)^{\frac{1}{2}}|\widetilde{u_h}-u_h|_{1,*}.
\end{aligned}
\end{equation*}
Therefore, by Assumption \ref{assump_approC0}, we have
$$
\Vert \widetilde{u_h}-u_h\Vert_* \leq \frac{\eta_{\rm max}}{C_s}
\left(\sum_{e\in\mathcal{E}^i_h}h^{-1}_e\Vert\llbracket \widetilde{u_h}\rrbracket\Vert^2_{0,e}+\sum_{e\in\mathcal{E}^\partial_h}h^{-1}_e\Vert \widetilde{u_h}-g\Vert^2_{0,e}\right)^{\frac{1}{2}}\leq C\epsilon.
$$

Finally, by triangle inequality and Corollary \ref{thm:DG_con}, we obtain
\begin{align*}
&\Vert u-\widetilde{u_h}\Vert_*\leq \Vert u-{u_h}\Vert_*+\Vert u_h-\widetilde{u_h}\Vert_*\leq C\epsilon.
\end{align*}
\end{proof}

\subsection{Convergence of the LRNN-$C^1$DG method}

In this subsection, let $\overline{u_h}$ denote the solution of the LRNN-$C^1$DG scheme \eqref{DisForm_C1}.
Similar to the case of the LRNN-$C^0$DG method, by enforcing $[ \nabla{\overline{u_h}}(\bx^e_j )]= 0$ for each point $\bx^e_j$ on $e$, we have $[ \nabla{\overline{u_h}}(\bx^e_j )] \approx 0$ as well. So we give the following assumption.

\begin{assumption}\label{assump_approC1}
Given a decomposition $\mathcal{T}_h$ with $|\mathcal{T}_h|=N_e$ and $V_h$ is the associated DG space of LRNN. For any small positive number $\epsilon$, on every edge $e\in\mathcal{E}_h$, there exists $N_\epsilon^e$ such that if $N^e > N_\epsilon^e$, then
$$
\Vert \llbracket\overline{u_h}\rrbracket \Vert_{0,e}\leq C h_e^{1/2}\epsilon,\quad \Vert \overline{u_h} -g \Vert_{0,e}\leq C h_e^{1/2}\epsilon
\quad {\rm and} \quad \Vert [ \nabla\overline{u_h}(\bx^e_j )] \Vert_{0,e}\leq C h_e^{-1/2}\epsilon.
$$
Here, $N^e$ is the number of the points $\bx_j^e$ on edge $e$ and $C$ denotes a constant number which is independent of $h$ and $M$.
\end{assumption}

\begin{rem}
To show that Assumption \ref{assump_approC0} and Assumption \ref{assump_approC1} are reasonable,   
in Section \ref{sec:numericalexample}, we give some numerical evidences which shows that $\|\llbracket u_h\rrbracket\|_{0,e}$, $\|[\nabla u_h]\|_{0,e}$ on interior edges and $\|u_h - g\|_{0,e}$ on boundary edges are decreasing with the increase of the number of collection points. 
\end{rem}

Finally, we show the convergence of the LRNN-$C^1$DG scheme.

\begin{theorem}
Let $u$ and $\overline{u_h}$ be solutions of the problem \eqref{weakform} and the LRNN-$C^1$DG scheme \eqref{DisForm_C1}, respectively.
If Assumption \ref{assump_appro} and Assumption \ref{assump_approC1} hold, for any small positive number $\epsilon$, there exist positive integers $M_\epsilon$, $N_\epsilon^e$ such that if $M > M_\epsilon$, $N^e > N_\epsilon^e$, then
\begin{equation}
\Vert u-\overline{u_h}\Vert_*\leq C\epsilon.
\end{equation}
\end{theorem}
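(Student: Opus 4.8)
The plan is to mimic the proof of the LRNN-$C^0$DG convergence theorem, carefully tracking the extra terms that arise from enforcing the flux continuity conditions instead of folding them into a bilinear form. First I would derive a Galerkin-type identity for $\overline{u_h}$. Summing the local relations \eqref{DisForm_C1} over $K\in\mathcal{T}_h$ and applying the identity \eqref{iden_dg} to rewrite the element-boundary terms in terms of averages and jumps, one obtains
\begin{equation*}
\int_\Omega \nabla_h\overline{u_h}\cdot\nabla_h v_h\,\dx - \int_{\mathcal{E}_h}\{\nabla_h\overline{u_h}\}\cdot\llbracket v_h\rrbracket\,\ds - \int_{\mathcal{E}_h^i}\{v_h\}[\nabla_h\overline{u_h}]\,\ds = \int_\Omega f v_h\,\dx \quad\forall v_h\in V_h.
\end{equation*}
Adding and subtracting the missing symmetrizing and penalty terms $-\int_{\mathcal{E}_h}\llbracket\overline{u_h}\rrbracket\cdot\{\nabla_h v_h\}\,\ds + \int_{\mathcal{E}_h}\eta\llbracket\overline{u_h}\rrbracket\cdot\llbracket v_h\rrbracket\,\ds$ to both sides lets me express the left-hand side as $a_h(\overline{u_h},v_h)$ plus a remainder consisting of (i) the flux-jump term $\int_{\mathcal{E}_h^i}\{v_h\}[\nabla_h\overline{u_h}]\,\ds$, (ii) the symmetrization term involving $\llbracket\overline{u_h}\rrbracket$, and (iii) the penalty term involving $\llbracket\overline{u_h}\rrbracket$. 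Combining with consistency \eqref{consis} (using $g$ on the boundary edges), I get an equation of the form $a_h(\overline{u_h}-u,v_h) = R(v_h)$ where $R(v_h)$ collects these three groups of boundary remainder terms plus the boundary-condition defect $\int_{\mathcal{E}_h^\partial}(\cdots)(\overline{u_h}-g)(\cdots)$.

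Next I would test with $v_h = \overline{u_h}-u_h \in V_h$ (with $u_h$ the LRNN-DG solution), use the Galerkin orthogonality \eqref{G_orth} and stability \eqref{stability} to write $C_s\Vert\overline{u_h}-u_h\Vert_*^2 \le a_h(\overline{u_h}-u,\overline{u_h}-u_h) = R(\overline{u_h}-u_h)$, and then estimate each piece of $R$ by Cauchy–Schwarz. The penalty and symmetrization terms are bounded exactly as in the $C^0$DG proof: $\left(\sum_{e}h_e^{-1}\Vert\llbracket\overline{u_h}\rrbracket\Vert_{0,e}^2\right)^{1/2}\Vert\overline{u_h}-u_h\Vert_*$, which is $\le C\epsilon\Vert\overline{u_h}-u_h\Vert_*$ by Assumption \ref{assump_approC1}. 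For the flux-jump term I would write $\int_{\mathcal{E}_h^i}\{\overline{u_h}-u_h\}[\nabla_h\overline{u_h}]\,\ds \le \left(\sum_e h_e\Vert[\nabla_h\overline{u_h}]\Vert_{0,e}^2\right)^{1/2}\left(\sum_e h_e^{-1}\Vert\{\overline{u_h}-u_h\}\Vert_{0,e}^2\right)^{1/2}$; the first factor is $\le C\epsilon$ by the third bound in Assumption \ref{assump_approC1}, and the second is controlled by $C\Vert\overline{u_h}-u_h\Vert_*$ via the trace inequality (here one needs that $\Vert\{w\}\Vert_{0,e}^2 \le C(\Vert w^+\Vert_{0,e}^2+\Vert w^-\Vert_{0,e}^2) \le C h_e^{-1}\sum_{K}(\Vert w\Vert_{0,K}^2 + h_K^2|w|_{1,K}^2)$ summed appropriately, which is bounded by $\Vert w\Vert_w^2\le\Vert w\Vert_*^2$). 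Dividing through by $\Vert\overline{u_h}-u_h\Vert_*$ yields $\Vert\overline{u_h}-u_h\Vert_*\le C\epsilon$, and then the triangle inequality together with Corollary \ref{thm:DG_con} gives $\Vert u-\overline{u_h}\Vert_* \le \Vert u-u_h\Vert_* + \Vert u_h-\overline{u_h}\Vert_* \le C\epsilon$.

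The main obstacle I anticipate is the bookkeeping in the first step: the local formulation \eqref{DisForm_C1} is \emph{not} symmetric and contains \emph{no} penalty or symmetrization terms, so one must manually reconstruct $a_h$ by inserting compensating terms, and each inserted term must then be tracked as part of $R(v_h)$ and bounded using the appropriate part of Assumption \ref{assump_approC1}. In particular the flux-jump term $[\nabla_h\overline{u_h}]$ has no analogue in the $C^0$DG proof, so the key new ingredient is the scaled trace/inverse estimate that lets $h_e^{-1}\Vert\{\overline{u_h}-u_h\}\Vert_{0,e}^2$ be absorbed into $\Vert\overline{u_h}-u_h\Vert_*^2$ — this is where the assumed scaling $h_e^{-1/2}$ in the flux bound of Assumption \ref{assump_approC1} is exactly calibrated so the product comes out as $\epsilon$ times the norm. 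Everything else is a routine repetition of the stability/boundedness/Galerkin-orthogonality machinery already set up for the LRNN-DG and LRNN-$C^0$DG methods.
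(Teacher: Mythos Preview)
Your proposal is correct and follows essentially the same route as the paper's proof: sum the local relations and use \eqref{iden_dg} to get a global identity, insert the missing symmetrization and penalty terms to reconstruct $a_h(\overline{u_h},v_h)$, subtract the consistency relation \eqref{consis}, test with $v_h=\overline{u_h}-u_h$, invoke stability \eqref{stability} and Galerkin orthogonality \eqref{G_orth}, bound the residual via Cauchy--Schwarz using Assumption~\ref{assump_approC1}, and finish with the triangle inequality and Corollary~\ref{thm:DG_con}. The paper writes the final residual bound as a single estimate $\le C\bigl(\sum h_e^{-1}\Vert\llbracket\overline{u_h}\rrbracket\Vert_{0,e}^2 + \sum h_e\Vert[\nabla_h\overline{u_h}]\Vert_{0,e}^2 + \sum h_e^{-1}\Vert\overline{u_h}-g\Vert_{0,e}^2\bigr)^{1/2}\Vert\overline{u_h}-u_h\Vert_*$ without spelling out the trace argument for the $\{v_h\}$ factor, whereas you make this step explicit; otherwise the arguments coincide.
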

\begin{proof}
We know that in each subdomain,
\begin{equation}
\int_{K} \nabla_h \overline{u_h} \cdot \nabla_h v_h \dx - \int_{\partial K} \nabla_h \overline{u_h} \cdot \boldsymbol{n}_K v_h \ds = \int_{K} f v_h \dx\quad \forall K\in\mathcal{T}_h.
\end{equation}
Then we add over all the elements to obtain
\begin{equation}
a^1_h(\overline{u_h},v_h)=\int_{\Omega} f v_h \dx\quad \forall v_h\in V_h,
\end{equation}
where
\begin{equation}
\begin{aligned}
a^1_h(\overline{u_h},v_h)&=\int_{\Omega} \nabla_h \overline{u_h} \cdot \nabla_h v_h \dx - \sum_{K\in\mathcal{T}_h}\int_{\partial K} \nabla_h \overline{u_h} \cdot \boldsymbol{n}_K v_h \ds\\
&=\int_{\Omega} \nabla_h \overline{u_h} \cdot \nabla_h v_h \dx -\int_{\mathcal{E}_h}\{\nabla_h\overline{u_h}\}\cdot\llbracket v_h\rrbracket\ds-\int_{\mathcal{E}^i_h}\{v_h\}\cdot[ \nabla_h\overline{u_h}]\ds.
\end{aligned}
\end{equation}
So
\begin{equation}
\begin{aligned}
&{a_h}(\overline{u_h},v_h) -\int_{\mathcal{E}^i_h}\{v_h\}\cdot[ \nabla_h\overline{u_h}]\ds +\int_{{\cal E}_h} \llbracket \overline{u_h}\rrbracket \cdot \{\nabla_h v_h\} \ds-\int_{{\cal E}_h} \eta\llbracket \overline{u_h}\rrbracket \cdot \llbracket v_h\rrbracket \ds \\
=& l(v_h)+\int_{{\cal E}_h^\partial} g \boldsymbol{n} \cdot\nabla_h v_h \ds- \int_{{\cal E}_h^\partial} \eta g  v_h \ds.
\end{aligned}
\end{equation}
And we know $a_h({u},v_h)= {l}(v_h)$, so we have
\begin{equation}
\begin{aligned}
&a_h(\overline{u_h}-u,v_h)\\= &\int_{\mathcal{E}^i_h}\{v_h\}\cdot[ \nabla_h\overline{u_h}]\ds -\int_{{\cal E}_h} \llbracket \overline{u_h}\rrbracket \cdot \{\nabla_h v_h\} \ds+\int_{{\cal E}_h} \eta\llbracket \overline{u_h}\rrbracket \cdot \llbracket v_h\rrbracket \ds+\int_{{\cal E}_h^\partial} g \boldsymbol{n} \cdot\nabla_h v_h \ds- \int_{{\cal E}_h^\partial} \eta g  v_h \ds.
\end{aligned}
\end{equation}

By the stability and boundedness of $a_h$ and \eqref{G_orth}, we get
\begin{equation*}
\begin{aligned}
&C_s\Vert \overline{u_h}-u_h\Vert^2_*\leq a_h(\overline{u_h}-u_h,\overline{u_h}-u_h)
=a_h(\overline{u_h}-u,\overline{u_h}-u_h)+a_h(u-u_h,\overline{u_h}-u_h)\\
=&\int_{\mathcal{E}^i_h}\{ \overline{u_h}-u_h\}\cdot[ \nabla_h\overline{u_h}]\ds -\int_{{\cal E}_h} \llbracket \overline{u_h}\rrbracket \cdot \{\nabla_h  (\overline{u_h}-u_h)\} \ds+\int_{{\cal E}_h} \eta\llbracket \overline{u_h}\rrbracket \cdot \llbracket  \overline{u_h}-u_h\rrbracket \ds\\
&+\int_{{\cal E}_h^\partial} g \boldsymbol{n} \cdot\nabla_h ( \overline{u_h}-u_h) \ds- \int_{{\cal E}_h^\partial} \eta g   (\overline{u_h}-u_h) \ds\\
\leq &C\left(\sum_{e\in\mathcal{E}^i_h}h^{-1}_e\Vert\llbracket \overline{u_h}\rrbracket\Vert^2_{0,e}+\sum_{e\in\mathcal{E}^i_h}h_e\Vert[\nabla_h \overline{u_h}]\Vert^2_{0,e}+\sum_{e\in\mathcal{E}^\partial_h}h^{-1}_e\Vert \overline{u_h}-g\Vert^2_{0,e}\right)^{\frac{1}{2}}\|\overline{u_h}-u_h\|_{*},
\end{aligned}
\end{equation*}
so
\begin{equation}
\begin{aligned}
\Vert \overline{u_h}-u_h\Vert_*&\leq\frac{C}{C_s}\left(\sum_{e\in\mathcal{E}^i_h}h^{-1}_e\Vert\llbracket \overline{u_h}\rrbracket\Vert^2_{0,e}+\sum_{e\in\mathcal{E}^i_h}h_e\Vert[\nabla_h \overline{u_h}]\Vert^2_{0,e}+\sum_{e\in\mathcal{E}^\partial_h}h^{-1}_e\Vert \overline{u_h}-g\Vert^2_{0,e}\right)^{\frac{1}{2}}\\
&\leq C\epsilon.
\end{aligned}
\end{equation}

Finally,
\begin{equation}
\Vert u-\overline{u_h}\Vert_*\leq \Vert u-{u_h}\Vert_*+\Vert u_h-\overline{u_h}\Vert_*\leq C\epsilon.
\end{equation}
\end{proof}

\section{Space-time LRNN with DG methods for heat equation}\label{sec:semi}

In this section, we study local randomized neural networks with DG methods for solving a typical time-dependent PDE, heat equation. Unlike the traditional method, which solves the problem by an iteration over time steps, we use the space-time approach, in which temporal and spatial variables are treated equally and jointly. Therefore, the accumulation of errors can be avoided. 

Consider
\begin{subequations}\label{heateq}
\begin{align}
u_t (t, \bx) - \Delta u (t, \bx) &= f (t, \bx) \quad{\rm in}\ \Sigma,\\
u(0, \bx) &= u_0( \bx) \quad\ {\rm in}\ \Omega,\label{timeinicondi}\\
u (t, \bx) & = g (t, \bx) \quad  {\rm on}\ I \times\partial \Omega,\label{timeboundarycondi}
\end{align}
\end{subequations}
where $\Omega\subset \mathbb{R}^d$ is a bounded space domain, $I=(0,\ T)$ is the time interval, $\Sigma = I\times \Omega$ is the space-time domain, $u$ is the unknown solution to be solved, $f$ is the given source term, $u_0$ is the initial condition and $g$ is a function defined on $I\times\partial \Omega$. 

We partition the domain into some subdomains, and approximate the solution on each subdomain by using a local neural network. First, we give the decomposition of the time interval $\mathcal{D}_\tau =\{I_i =(t_{i-1},t_i), 0=t_0< t_1<\cdots<t_{N_t}=T\}$, where $\tau = \max\limits_{I_i\in {\cal D}_\tau}\{{\rm length}(I_i)\}$ and $N_t$ denotes the number of subintervals along the temporal direction. Let ${\cal P}_\tau=\{t_i,i=0,\cdots,{N_t}\}$ be the union of the boundary points of all the intervals $I_i = (t_{i-1}, t_i) \in \mathcal{D}_\tau$, and $\mathcal{P}_\tau^{i} = \mathcal{P}_\tau \backslash \{t_0,t_{N_t}\}$ is the set of all interior points. Let $\{ \mathcal{T}_h \}$ be the decomposition of $\bar{\Omega}$, where $h = \max_{K\in {\cal T}_h}\{{\rm diam}(K)\}$. $\mathcal{E}_h$, $\mathcal{E}^i_h$ and $\mathcal{E}^\partial_h$ have the same definitions stated in Section \ref{lrnndgmethod}. Let $\{\mathcal{D}_\tau\times\mathcal{T}_h\}$ denote the decomposition of the space-time domain $\bar\Sigma$. For $\mathcal{T}_h$, $N_s$ denotes the number of elements in $\mathcal{T}_h$, that is, $N_e=|\mathcal{D}_\tau\times\mathcal{T}_h|=N_t N_s$. Let $\sigma^+$ and $\sigma^-$ be two neighboring elements sharing a common face $f$. Denote by
$\boldsymbol{n}^{\pm}=\boldsymbol{n}|_{\partial \sigma^{\pm}}$ the unit outward normal vectors
on $\partial \sigma^{\pm}$. For a scalar-valued function $v$ and a
vector-valued function $\boldsymbol{q}$, let $v^{\pm}=v|_{\partial \sigma^{\pm}}$ and $\boldsymbol{q}^\pm=\boldsymbol{q}|_{\partial \sigma^{\pm}}$. We define the averages $\{ \cdot \}$ and the jumps $\llbracket \cdot \rrbracket$, $[ \cdot ]$ on $f \in (\mathcal{P}_\tau^{i}\times \mathcal{T}_h)\cup (\mathcal{D}_\tau\times\mathcal{E}_h^{i})$ by
\begin{subequations}
\begin{align*}
\{ v \} = \frac{1}{2} (v^+ + v^-),& \quad \llbracket v\rrbracket = v^+ \boldsymbol{n}^+ + v^- \boldsymbol{n}^-, \\
\{ \boldsymbol{q} \} = \frac{1}{2}(\boldsymbol{q}^+ +\boldsymbol{q}^-),& \quad
[\boldsymbol{q}] = \boldsymbol{q}^+ \cdot \boldsymbol{n}^+ +\boldsymbol{q}^- \cdot \boldsymbol{n}^-.
\end{align*}
\end{subequations}
If $f \in (\mathcal{P}_\tau^{\partial}\times \mathcal{T}_h)\cup(\mathcal{D}_\tau\times \mathcal{E}_h^{\partial})$, we set
\begin{subequations}
\begin{align*}
\llbracket v\rrbracket = v \boldsymbol{n},\quad
\{ \boldsymbol{q} \} = \boldsymbol{q},
\end{align*}
\end{subequations}
where $\boldsymbol{n}$ is the unit outward normal vector on $\partial\Sigma$.

\subsection{Space-time LRNN-DG method}
We introduce the following DG space based on the local randomized neural network associated with the partition $\mathcal{D}_\tau \times \mathcal{T}_h$:
\begin{align*}
V^\tau_h&=\{v^\tau_h \in L^2(\Sigma): \;v^\tau_h |_{I_i\times K} \in \mathcal{M}_{RNN}(I_i\times K)\quad \forall\,I_i\in\mathcal{D}_\tau\ \forall\,K\in\mathcal{T}_h \},\\
\boldsymbol{Q}^\tau_h&=\{\boldsymbol{q}^\tau_h \in [L^2(\Sigma)]^d: \;\boldsymbol{q}^\tau_h |_{I_i\times K} \in [\mathcal{M}_{RNN}(I_i\times K)]^d\quad \forall\,I_i\in\mathcal{D}_\tau\ \forall\,K\in\mathcal{T}_h \}.
\end{align*}

We rewrite the heat equation as the first-order system, 
\begin{subequations}
\begin{align*}
{\boldsymbol{p}} = \nabla u \quad {\rm in} \ \Sigma, \\
\frac{\partial u}{\partial t} - \nabla \cdot {\boldsymbol{p}} = f \quad {\rm in} \ \Sigma.
\end{align*}
\end{subequations}
In the above equations, multiply the test functions ${\boldsymbol{q}}$ and $v$ respectively on subdomain $\sigma = I_i \times K$, then we get by integration by parts,
\begin{equation*}
\begin{aligned}
\int_{\sigma} {\boldsymbol{p}} \cdot {\boldsymbol{q}} \dx \dt = - \int_{\sigma} u \nabla \cdot {\boldsymbol{q}} \dx \dt &+ \int_{I_i\times\partial K} u \boldsymbol{q}\cdot\boldsymbol{n} \ds \dt,\\
- \int_{\sigma} u\frac{\partial v}{\partial t} \dx \dt + \int_{\sigma} {\boldsymbol{p}} \cdot \nabla v \dx\dt + \int_{K}(uv)|^{t_i}_{t_{i-1}}\dx&= \int_{\sigma} f v \dx \dt +\int_{I_i \times \partial K} {\boldsymbol{p}} \cdot\boldsymbol{n} v\ds \dt .
\end{aligned}
\end{equation*}
We append subscript $h$ on $\nabla$, append subcript $\tau$ on $\partial$ and append subscript $h$ and $\tau$ on $u$, $v$, $\boldsymbol{p}$ and $\boldsymbol{q}$. Besides, we use numerical traces $\widehat{u_h^\tau}$ and $\widehat{\boldsymbol{p}^\tau_h}$ to approximate $u$ and $\boldsymbol{p}$ in spatial cross-section $f\in \mathcal{D}_\tau\times\mathcal{E}^{}_h$ and use numerical traces $\widetilde{u_h^\tau}$ to approximate $u$ in temporal cross-section $f\in \mathcal{P}^{}_\tau\times\mathcal{T}_h$, 
\begin{equation*}
\begin{aligned}
\int_{\sigma} {\boldsymbol{p}^\tau_h} \cdot {\boldsymbol{q}^\tau_h} dx dt = - \int_{\sigma} u^\tau_h \nabla_h \cdot {\boldsymbol{q}^\tau_h} \dx\dt &+ \int_{I_i \times \partial K} \widehat{u^\tau_h} \boldsymbol{q}^\tau_h\cdot\boldsymbol{n} \ds\dt,\\
 -\int_{\sigma} u^\tau_h\frac{\partial_\tau v^\tau_h}{\partial_\tau t} \dx \dt + \int_{\sigma} {\boldsymbol{p}^\tau_h} \cdot \nabla_h v^\tau_h \dx\dt + \int_{K}(\widetilde{u^\tau_h} v^\tau_h)|^{t_i}_{t_{i-1}}\dx&= 
 \int_{\sigma} f v^\tau_h \dx \dt + \int_{I_i \times\partial K} \widehat{\boldsymbol{p}^\tau_h} \cdot  \boldsymbol{n} v^\tau_h\ds\dt.
\end{aligned}
\end{equation*}

Then we add over all the elements, use integration by parts and \eqref{iden_dg},
\begin{align*}
&\int_{\Sigma} {\boldsymbol{p}^\tau_h} \cdot {\boldsymbol{q}^\tau_h} dx dt = \int_{\Sigma} \nabla_h u^\tau_h \cdot {\boldsymbol{q}^\tau_h} \dx\dt + \int_{\mathcal{D}_\tau\times\mathcal{E}^{}_h} \llbracket\widehat{u^\tau_h}-u^\tau_h\rrbracket \cdot\{ \boldsymbol{q}^\tau_h\} \ds\dt+ \int_{\mathcal{D}_\tau\times\mathcal{E}^{i}_h} [\boldsymbol{q}^\tau_h] \cdot\{\widehat{u^\tau_h}-u^\tau_h \} \ds\dt,\\
& \int_{\Sigma} \frac{\partial_\tau u^\tau_h}{\partial_\tau t}v^\tau_h \dx \dt + \int_{\Sigma} {\boldsymbol{p}^\tau_h} \cdot \nabla_h v^\tau_h \dx\dt + \sum^{N_t}_{i=0}\int_{\mathcal{T}_h} \llbracket\widetilde{u^\tau_h}(t_i,\bx)-u^\tau_h(t_i,\bx)\rrbracket \cdot\{ v^\tau_h(t_i,\bx)\} \dx -  \int_{\Sigma} f v^\tau_h \dx \dt \\
&+ \sum^{N_t -1}_{i=1}\int_{\mathcal{T}_h} \llbracket v^\tau_h(t_i,\bx)\rrbracket \cdot\{\widetilde{u^\tau_h}(t_i,\bx)-u^\tau_h(t_i,\bx) \} \dx=  \int_{\mathcal{D}_\tau\times\mathcal{E}^{}_h} \llbracket v^\tau_h\rrbracket \cdot\{\widehat{\boldsymbol{p}^\tau_h}\} \ds\dt+ \int_{\mathcal{D}_\tau\times\mathcal{E}^{i}_h} [\widehat{\boldsymbol{p}^\tau_h}] \cdot\{v^\tau_h \} \ds\dt .
\end{align*}
Here, we take 
\begin{subequations}
\begin{align*}
\widehat{u^\tau_h} = \{ u^\tau_h \} \quad &on \ f\in\mathcal{D}_\tau\times\mathcal{E}^i_h,\\
\widehat{u^\tau_h} =g \quad &on \ f\in\mathcal{D}_\tau\times\mathcal{E}^\partial_h,\\
\widetilde{u^\tau_h} = \{ u^\tau_h \} - \eta \llbracket u^\tau_h\rrbracket \quad &on \ f\in\mathcal{P}^i_\tau\times\mathcal{T}_h,\\
\widetilde{u^\tau_h} = u_0 \quad &on \ f\in\{t_0\}\times\mathcal{T}_h,\\
\widetilde{u^\tau_h} = u^\tau_h \quad &on \ f\in\{t_{N_t}\}\times\mathcal{T}_h,\\
\widehat{\boldsymbol{p}^\tau_h} = \{ \nabla_h u^\tau_h \} - \eta \llbracket u^\tau_h\rrbracket \quad &on \ f\in \mathcal{D}_\tau\times\mathcal{E}^i_h,\\
\widehat{\boldsymbol{p}^\tau_h} = \nabla_h u^\tau_h - \eta g\cdot\boldsymbol{n} \quad &on \ f\in \mathcal{D}_\tau\times\mathcal{E}^\partial_h,
\end{align*}
\end{subequations}
where $\eta = {\eta_f}{(h_f)}^{-1}$, and $\eta_f$ can be different by the choice of the face $f$. And we choose $\boldsymbol{q}^\tau_h = \nabla_h v^\tau_h$, then the sapce-time LRNN-DG scheme for solving the heat problem is: Find $u^\tau_h \in V_h^\tau$ such that
\begin{equation}\label{timec0for}
\begin{aligned}
B_{h\tau}(u^\tau_h,v^\tau_h) = l(v^\tau_h) \quad \forall v^\tau_h\in V^\tau_h,
\end{aligned}
\end{equation}
where
\begin{equation}
\begin{aligned}
B_{h\tau}(u^\tau_h,v^\tau_h)=
&\int_{\Sigma} \frac{\partial_\tau u^\tau_h}{\partial_\tau t}v^\tau_h \dx \dt + \int_{\Sigma} \nabla_h u^\tau_h \cdot\nabla_h v^\tau_h \dx \dt\\ 
&-\sum^{N_t -1}_{i=0}\int_{\mathcal{T}_h} \llbracket u^\tau_h(t_i,\bx)\rrbracket\cdot \{ v^\tau_h(t_i,\bx) \} \dx-\sum^{N_t -1}_{i=1} \int_{\mathcal{T}_h} \eta \llbracket u^\tau_h(t_i,\bx)\rrbracket\cdot\llbracket v^\tau_h(t_i,\bx)\rrbracket \dx\\
&- \int_{\mathcal{D}_\tau \times\mathcal{E}^{}_h} \left(\llbracket u^\tau_h\rrbracket\cdot \{ \nabla_{h}v^\tau_h \} +\llbracket v^\tau_h\rrbracket\cdot \{ \nabla_{h}u^\tau_h \} -\eta \llbracket u^\tau_h\rrbracket\cdot \llbracket v^\tau_h\rrbracket\right) \ds\dt,
\end{aligned}
\end{equation}
\begin{equation}
\begin{aligned}
l(v^\tau_h)
= \int_{\Sigma} f v^\tau_h dx dt
- \int_{ \mathcal{D}_\tau\times{\cal E}_h^{\partial}} g \boldsymbol{n}\cdot \nabla_{h} v^\tau_{h} \dt\ds + \int_{ \mathcal{D}_\tau\times{\cal E}_h^{\partial}} \eta g v^\tau_h \dt\ds
+ \int_{\mathcal{T}_h} u_0(\bx) v^\tau_{h}(t_0,\bx) \dx.
\end{aligned}
\end{equation}

From the above scheme, we can get a linear system of equations
\begin{align}
\mathbb{A}U=L,
\end{align}
where $\mathbb{A}$ is a $N_e M \times N_e M $ matrix, $L$ is a $N_e M \times 1$ vector, and there are $N_e M$ unknown variables $U = \{u^{I_i\times K}_j: I_i \in \mathcal{D}_\tau, K \in \mathcal{T}_h, j=1,2,\cdots,M\}$. Here, the width of the last hidden layer is $M$.
We look for the least-squares solution for this system. 
Therefore, the parameters ${U}$ in the neural networks' output layers are obtained by the linear least-squares computation.

\begin{rem}
Because the variables $x$ and $t$ are inputs of randomized neural networks, so the LRNN-DG scheme \eqref{timec0for} is based on a space-time approach, and we can solve this time-dependent problem by one least-squares computation, which is more efficient than traditional approaches.
\end{rem}

We give the following lemma to show the consistency of the space-time LRNN-DG method.
\begin{lemma}\label{lem:spacetimeconsis}
The space-time LRNN-DG scheme is consistent, i.e., for the solution $u\in C^0(I; H^2(\Omega))$ of the heat equation \eqref{heateq}, we have
\begin{equation}\label{consis_st}
B_{h\tau}(u,v^\tau_h)= l(v^\tau_h)\quad\forall v^\tau_h\in V^\tau_h.
\end{equation}
\end{lemma}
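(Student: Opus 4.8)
The plan is to mirror the consistency argument of Lemma~\ref{lem:consis}, now carried out simultaneously in the spatial and temporal directions. First I would record the consequences of the hypothesis $u\in C^0(I;H^2(\Omega))$: on every interior temporal node $t_i\in\mathcal{P}_\tau^{i}$ the trace $u(t_i,\cdot)$ is single-valued, hence $\llbracket u(t_i,\bx)\rrbracket=0$; for a.e.\ $t$ the slice $u(t,\cdot)\in H^2(\Omega)$ has no spatial jumps, so $\llbracket u\rrbracket=0$ and $[\nabla_h u]=0$ on $\mathcal{D}_\tau\times\mathcal{E}_h^{i}$; while $u=g$ on $\mathcal{D}_\tau\times\mathcal{E}_h^{\partial}$ and $u(t_0,\cdot)=u_0$. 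Substituting $u$ into $B_{h\tau}(u,v^\tau_h)$, every jump/penalty contribution supported on interior faces vanishes: the temporal sums over $1\le i\le N_t-1$ disappear, and in the face integral over $\mathcal{D}_\tau\times\mathcal{E}_h$ the interior part of $\llbracket u\rrbracket\cdot\{\nabla_h v^\tau_h\}$ and of $\eta\llbracket u\rrbracket\cdot\llbracket v^\tau_h\rrbracket$ vanish. Their boundary parts collapse (using $\{\nabla_h v^\tau_h\}=\nabla_h v^\tau_h$, $\llbracket v^\tau_h\rrbracket=v^\tau_h\bn$, $\bn\cdot\bn=1$ on $\mathcal{E}_h^{\partial}$) to $-\int_{\mathcal{D}_\tau\times\mathcal{E}_h^{\partial}}g\,\bn\cdot\nabla_h v^\tau_h\,\ds\dt+\int_{\mathcal{D}_\tau\times\mathcal{E}_h^{\partial}}\eta g v^\tau_h\,\ds\dt$, and the $i=0$ summand of $-\sum_{i}\int_{\mathcal{T}_h}\llbracket u(t_i,\bx)\rrbracket\cdot\{v^\tau_h(t_i,\bx)\}\dx$ reduces, via the outward normal at $t_0$, to $\int_{\mathcal{T}_h}u_0\,v^\tau_h(t_0,\bx)\dx$.

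Next I would treat the volume term $\int_\Sigma \nabla_h u\cdot\nabla_h v^\tau_h\,\dx\dt$. Applying \eqref{ibps} in the spatial variable on each element $\sigma=I_i\times K$ for fixed $t$ and then integrating in $t$ gives $\int_\sigma \nabla_h u\cdot\nabla_h v^\tau_h\,\dx\dt=-\int_\sigma \Delta u\,v^\tau_h\,\dx\dt+\int_{I_i\times\partial K}(\nabla_h u\cdot\bn_K)\,v^\tau_h\,\ds\dt$; summing over all $\sigma$ and invoking \eqref{iden_dg} slice-by-slice in time, the lateral edge contributions become $\int_{\mathcal{D}_\tau\times\mathcal{E}_h}\llbracket v^\tau_h\rrbracket\cdot\{\nabla_h u\}\,\ds\dt+\int_{\mathcal{D}_\tau\times\mathcal{E}_h^{i}}\{v^\tau_h\}[\nabla_h u]\,\ds\dt$, and the last integral drops because $[\nabla_h u]=0$. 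The surviving edge term $\int_{\mathcal{D}_\tau\times\mathcal{E}_h}\llbracket v^\tau_h\rrbracket\cdot\{\nabla_h u\}\,\ds\dt$ then cancels exactly the term $-\int_{\mathcal{D}_\tau\times\mathcal{E}_h}\llbracket v^\tau_h\rrbracket\cdot\{\nabla_h u\}\,\ds\dt$ carried by $B_{h\tau}$. What remains is $\int_\Sigma(u_t-\Delta u)\,v^\tau_h\,\dx\dt$ together with the three boundary/initial terms identified above; since $u$ is continuous across temporal interfaces one has $\partial_\tau u/\partial_\tau t=u_t$ in $\Sigma$, and using $u_t-\Delta u=f$ in $\Sigma$ yields $B_{h\tau}(u,v^\tau_h)=\int_\Sigma f v^\tau_h\,\dx\dt-\int_{\mathcal{D}_\tau\times\mathcal{E}_h^{\partial}}g\,\bn\cdot\nabla_h v^\tau_h\,\ds\dt+\int_{\mathcal{D}_\tau\times\mathcal{E}_h^{\partial}}\eta g v^\tau_h\,\ds\dt+\int_{\mathcal{T}_h}u_0\,v^\tau_h(t_0,\bx)\dx=l(v^\tau_h)$.

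The integration-by-parts and cancellations are routine; the part that needs care is the bookkeeping of the temporal-interface terms, in particular getting the sign of the normal at $t=0$ right so that the $i=0$ summand reproduces precisely the initial-data functional $\int_{\mathcal{T}_h}u_0\,v^\tau_h(t_0,\bx)\dx$ in $l(\cdot)$, and verifying that the numerical traces $\widehat{u^\tau_h}$, $\widehat{\boldsymbol p^\tau_h}$, $\widetilde{u^\tau_h}$ used to derive $B_{h\tau}$ collapse to the corresponding exact quantities when evaluated at the smooth solution $u$ (so that no spurious terms are introduced). For completeness I would also note that these manipulations implicitly use $u_t\in L^2(\Sigma)$, which holds for the exact solution of \eqref{heateq} with the stated data.
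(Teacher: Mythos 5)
Your proposal is correct and follows essentially the same route as the paper's proof: use $u\in C^0(I;H^2(\Omega))$ to annihilate all interior temporal and spatial jump terms, reduce the boundary and $i=0$ contributions to the $g$- and $u_0$-terms of $l(\cdot)$, then integrate by parts in space via \eqref{ibps} and \eqref{iden_dg} so that the surviving edge term cancels $-\int_{\mathcal{D}_\tau\times\mathcal{E}_h}\llbracket v^\tau_h\rrbracket\cdot\{\nabla_h u\}\,\ds\dt$ and the PDE $u_t-\Delta u=f$ finishes the computation. Your extra bookkeeping of the temporal normal at $t_0$ and of the numerical traces is consistent with, and slightly more explicit than, what the paper records.
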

\begin{proof}
We know that $u\in C^0(I; H^2(\Omega))$ implies $\llbracket u\rrbracket=0$, $[ \nabla u]=0$ on $\mathcal{E}^i_h$, $u=g$ on $\mathcal{E}^\partial_h$, $\llbracket u(t_i,\bx)\rrbracket = 0$ for $i = 1, 2,\cdots,N_t-1$ and $u(t_0,\bx)=u_0(\bx)$. Then, by the identities \eqref{ibps} and \eqref{iden_dg}, we have
\begin{equation*}
\begin{aligned}
B_{h\tau}(u,v^\tau_h)=
&\int_{\Sigma} \frac{\partial u}{\partial t}v^\tau_h \dx \dt + \int_{\Sigma} \nabla u \cdot\nabla_h v^\tau_h \dx \dt - \int_{\mathcal{D}_\tau \times\mathcal{E}^{}_h} \llbracket v^\tau_h\rrbracket\cdot \{ \nabla u \} \ds\dt\\
&- \int_{ \mathcal{D}_\tau\times{\cal E}_h^{\partial}} g \boldsymbol{n}\cdot \nabla_{h} v^\tau_{h} \dt\ds + \int_{ \mathcal{D}_\tau\times{\cal E}_h^{\partial}} \eta g v^\tau_h \dt\ds + \int_{\mathcal{T}_h} u_0(\bx) v^\tau_{h}(t_0,\bx) \dx\\
=& \int_{\Sigma} (\frac{\partial u}{\partial t} - \Delta u) v^\tau_h\dx\dt +\int_{\mathcal{D}_\tau\times{\cal E}_h} \nabla u \cdot \boldsymbol{n}_K v^\tau_h\dt\ds - \int_{\mathcal{D}_\tau \times\mathcal{E}^{}_h} \llbracket v^\tau_h\rrbracket\cdot \{ \nabla u \} \ds\dt\\
&- \int_{ \mathcal{D}_\tau\times{\cal E}_h^{\partial}} g \boldsymbol{n}\cdot \nabla_{h} v^\tau_{h} \dt\ds + \int_{ \mathcal{D}_\tau\times{\cal E}_h^{\partial}} \eta g v^\tau_h \dt\ds + \int_{\mathcal{T}_h} u_0(\bx) v^\tau_{h}(t_0,\bx) \dx\\
=& l(v_h).
\end{aligned}
\end{equation*}
\end{proof} 

\subsection{Space-time LRNN-$C^0$DG method}

In the space-time LRNN-DG method, a proper value of the penalty parameter is hard to choose. To avoid this difficulty, in this subsection, we introduce the space-time LRNN-$C^0$DG method. 
 
The setting of local randomized neural networks and the partition are the same as ones stated above. Now we enforce the $C^0$-continuous condition on $f \in \left(\mathcal{D}_\tau\times\mathcal{E}^{i}_h\right)\cup\left(\mathcal{P}^{i}_\tau\times\mathcal{T}_h\right)$, the initial condition on $f\in\{t_0\}\times\mathcal{T}_h$ and Dirichlet boundary condition on $f\in\mathcal{D}_\tau\times\mathcal{E}^{\partial}_h$ to solve this problem. We add the additional equations to enforce the solution to satisfy the boundary condition \eqref{timeboundarycondi}, that is, we choose some points on boundary faces, $P^{S\partial}_h = \{(t^f_j,\bx^f_j) \in f: f\in \mathcal{D}_\tau\times\mathcal{E}^{\partial}_h,j=1,2,\cdots,N_\partial^f\}$ and $|P^{S\partial}_h| = N^S_\partial$, such that
\begin{equation}\label{timeboundarypoint}
u^\tau_h(t^f_j,\bx^f_j) = g(t^f_j,\bx^f_j)\quad \forall(t^f_j,\bx^f_j)\in P^{S\partial}_h.
\end{equation}

We add the additional equations to enforce the solution to satisfy the initial condition \eqref{timeinicondi}, that is, we choose some points at the initial time, $P^{t_0}_h=\{(t_0,\bx^f_j) \in f:f\in\{t_0\}\times\mathcal{T}_h,j=1,2,\cdots,N^f_{t_0}\}$ and $|P^{t_0}_h| = N^T_{t_0}$, such that
\begin{equation}\label{timeinipoint}
u^\tau_h(t_0,\bx^f_j) = u_0(\bx^f_j)\quad \forall(t_0,\bx^f_j)\in P^{t_0}_h.
\end{equation}

Moreover, we impose that the numerical solution $u_h$ satisfies certain $C^0$-concontinuity conditions across the interior faces $f\in \left(\mathcal{D}_\tau\times\mathcal{E}^{i}_h\right)\cup\left(\mathcal{P}^{i}_\tau\times\mathcal{T}_h\right)$ along the spatial and temporal directions. We choose some points on the boundary faces, $P^i_h=\{(t^f_j,\bx^f_j) \in f: f\in \left(\mathcal{D}_\tau\times\mathcal{E}^{i}_h\right)\cup\left(\mathcal{P}^{i}_\tau\times\mathcal{T}_h\right),j=1,2,\cdots,N^f_{in}\}$ and $|P^{i}_h| = N_{in}$, such that
\begin{equation}\label{timeinterpoint}
\llbracket u^\tau_h(t^f_j,\bx^f_j)\rrbracket = 0\quad\forall(t^f_j,\bx^f_j)\in P^{i}_h.
\end{equation}
The system of equations with respect to \eqref{timeboundarypoint}-\eqref{timeinterpoint} confirms the boundary conditions,
initial conditions and continuity conditions of interior edges, so we have
\begin{equation*}
\mathbb{A}_2 U =L_2,
\end{equation*}
where $\mathbb{A}_2$ is a $(N^S_\partial + N^T_{t_0} + N_{in}) \times N_e M$ matrix, $U$ is a $N_e M \times 1$ unknown vector, $L_2$ is a $(N^S_\partial + N^T_{t_0} + N_{in}) \times 1$ vector. This system of equations makes the jump of solution $\llbracket u^\tau_h \rrbracket \approx 0$, so the LRNN-$C^0$DG scheme is to find $u^\tau_h\in V^\tau_h$ such that
\begin{equation} \label{DisForm_ST_C0}
\begin{aligned}
    B^0_{h\tau}(u^\tau_h,v^\tau_h) &= l^0(v^\tau_h)\quad &\forall\, v^\tau_h \in V^\tau_h,\\
    \llbracket u^\tau_h(t^{f}_j,\bx^{f}_j )\rrbracket &= 0\quad &\forall (t^{f}_j,\bx^f_j)\in P^i_h,\\
    u^\tau_h(t^{f}_j,\bx^f_j) &= g(t^{f}_j,\bx^f_j)\quad &\forall (t^{f}_j,\bx^f_j)\in P^{S\partial}_h,\\
    u^\tau_h(t_0,\bx^f_j) &= u_0(\bx^f_j)\quad &\forall (t_0,\bx^f_j)\in P^{t_0}_h,\\
    \end{aligned}
\end{equation}
  where
\begin{align}
{B^0_{h\tau}}(u^\tau_h,v^\tau_h) =&\int_{\Sigma} \frac{\partial_\tau u^\tau_h}{\partial_\tau t} v^\tau_h \dx \dt + \int_{\Sigma} \nabla_h u^\tau_h \cdot\nabla_h v^\tau_h \dx \dt\nonumber \\
&- \int_{\mathcal{D}_\tau \times\mathcal{E}_h} ( \llbracket u^\tau_h\rrbracket\cdot \{ \nabla_{h}v^\tau_h \} +\llbracket v^\tau_h\rrbracket\cdot \{ \nabla_{h}u^\tau_h \}) \ds\dt,\label{timea0h}\\
{l^0}(v^\tau_h)= &\int_{\Sigma} f v^\tau_h dx dt
- \int_{ \mathcal{D}_\tau\times{\cal E}_h^{\partial}} g \boldsymbol{n}\cdot \nabla_{h} v^\tau_{h} \dt\ds.\nonumber
\end{align}
   
Then we can get global stiffness matrix $\mathbb{A}_1$ and the right-hand side $L_1$ of \eqref{timec0for}, where $\mathbb{A}_1$ is a $N_e M \times N_e M$ matrix, $L_1$ is a $N_e M \times 1$ vector. Combine $\mathbb{A}_1$, $\mathbb{A}_2$, $L_1$ and $L_2$, we get a linear system
\begin{equation}
\begin{bmatrix} \mathbb{A}_1 \\ \mathbb{A}_2 \end{bmatrix} U = \begin{bmatrix} L_1 \\ L_2 \end{bmatrix}.
\end{equation}
We look for the least-squares solution to this system. 
After the weights of the output layer in each local neural network are obtained by the linear least-squares computation, we can get all the values of the problem \eqref{heateq} in the domain $\Sigma$.
Note that this scheme is free of the penalty parameter.

\begin{rem}
From the mumerical examples, we see that the scheme \eqref{DisForm_ST_C0} has a good performance. But if we get rid of the term $- \int_{\mathcal{D}_\tau \times\mathcal{E}_h} \llbracket v^\tau_h\rrbracket\cdot \{ \nabla_{h}u^\tau_h \} \ds\dt$ in \eqref{timea0h}, the new scheme still work well in numerical experiments, that is
\begin{equation} \label{DisForm_ST_C02}
    \widetilde{B^0_{h\tau}}(u^\tau_h,v^\tau_h) = \int_{\Sigma} f v^\tau_h\dx\ds\quad \forall\, v^\tau_h \in V^\tau_h,
\end{equation}
  where
 \begin{equation}
\begin{aligned}\label{biform_ST_C02}
\widetilde{B^0_{h\tau}}(u^\tau_h,v^\tau_h)=
\int_{\Sigma} \frac{\partial_\tau u^\tau_h}{\partial_\tau t} v^\tau_h \dx \dt + \int_{\Sigma} \nabla_h u^\tau_h \cdot\nabla_h v^\tau_h \dx \dt - \int_{\mathcal{D}_\tau \times\mathcal{E}_h} \llbracket v^\tau_h\rrbracket\cdot \{ \nabla_{h}u^\tau_h \} \ds\dt.
\end{aligned}
\end{equation}
\end{rem}
\subsection{Space-time LRNN-$C^1$DG method}
We state the basic idea of LRNN-$C^1$DG in Section \ref{lrnnc1dgmethod}. Similarly, we introduce the space-time LRNN-$C^1$DG method for solving the heat equation.

We add the system of equations to enforce the solution $u^\tau_h$ to satisfy the boundary condition, the initial condition, and $C^0$-continuity conditions along with spatial and temporal directions just like \eqref{timeboundarypoint}-\eqref{timeinterpoint}. Moreover, we impose that the numerical solution $u^\tau_h$ satisfies certain $C^1$-continuity conditions across the interior faces $f\in\mathcal{D}_\tau\times\mathcal{E}^{i}_h$ along the spatial direction, choose points $P^{Si}_h=\{(t^f_j, \bx^f_j)\in f: f \in \mathcal{D}_\tau\times\mathcal{E}^{i}_h,j=1,2,\cdots,N^f_{in}\}$ and $|P^{Si}_h|=N^S_{in}$,
\begin{equation}\label{timeinterpoint_c1}
[ \nabla_h u^\tau_h(\bx^{f}_j )] = 0\quad \forall \bx^f_j\in P^{Si}_h.
\end{equation}

So we have the new problem that finding $u_h\in V^\tau_h$ such that 
\begin{equation} \label{DisForm_ST_C1}
    B_{h\tau}^\sigma(u^\tau_h,v^\tau_h) = \int_{\sigma} f v^\tau_h \dt\dx\quad \forall\, \sigma = \left(I_i\times K\right) \in \left(\mathcal{D}_\tau\times\mathcal{T}_h\right),
\end{equation}
  where
\begin{align}\label{biform_ST_C1}
    B^\sigma_{h\tau}(u^\tau_h,v^\tau_h) =\int_{\sigma} \frac{\partial u^\tau_h}{\partial t} v^\tau_h \dt\dx + \int_{\sigma} \nabla u^\tau_h \cdot \nabla v^\tau_h \dt\dx - \int_{I_i\times \partial K} \nabla u^\tau_h \cdot \boldsymbol{n} v^\tau_h \ds\dt.
\end{align}
Then we can get the global stiffness matrix $ \mathbb{A}_1$ and the right-hand side $L_1$, where $ \mathbb{A}_1$ is a $N_e M \times N_e M$ matrix, $L_1$ is a $N_e M \times 1$ vector, such that $ \mathbb{A}_1 U = L_1$.

Let $u^\tau_h$ satisfy the conditions \eqref{timeboundarypoint}-\eqref{timeinterpoint}, $C^1$-continuity condition \eqref{timeinterpoint_c1}. Then we can obtain a system of equations
\begin{equation}
 \mathbb{A}_2 U = L_2,
\end{equation}
where $ \mathbb{A}_2$ is a $(N^T_\partial +N^S_\partial + N_{in} + N^S_{in}) \times N_e M$ matrix, $U$ is a $N_e M \times 1$ matrix of unknown variables, $L_2$ is a $(N^T_\partial +N^S_\partial + N_{in} + N^S_{in}) \times 1$ vector.
Combine $ \mathbb{A}_1$, $ \mathbb{A}_2$, $L_1$ and $L_2$, we obtain
\begin{equation}
\begin{bmatrix} \mathbb{A}_1 \\ \mathbb{A}_2 \end{bmatrix} U = \begin{bmatrix} L_1 \\ L_2 \end{bmatrix}.
\end{equation}

We look for the least-squares solution to this system. After the weights of the output layer in each local neural network are obtained by the linear least-squares computation, we can get all the values of the problem \eqref{heateq} in the domain $\Sigma$.

\section{Numerical Examples}\label{sec:numericalexample}

In this section, we present several test problems to demonstrate the performance of
the methods developed herein.
In these examples, the neural network is implemented using the Tensorflow and Keras libraries in Python,  as stated in Section \ref{rnn}. Each local neural network (for each sub-domain) consists of a single hidden layer, whose parameters are pre-assigned and fixed to uniform random values generated from $[-w_0, w_0]$, where $w_0$ is a constant. The overall neural network is composed of all the local neural networks, which are coupled with one another through the DG formulation or the $C^0/C^1$ conditions. The integrals in the weak formulations are computed by the Gaussian quadrature.
We employ 70 quadrature points for the integrals in 1-d, and $70\times70$ quadrature points for the integrals in 2-d. For solving the rectangular system of equations about the output-layer coefficients, we employ the linear least-squares routine from LAPACK, available through wrapper functions in the scipy package in Python. The ${\rm DoF}_K$ or ${\rm DoF}_\sigma$ in all tables below denote the degrees of freedom on each subdomain, and the DoF in the figures refers to the total degrees of freedom.

\begin{example}[One-Dimensional Helmholtz Equation]
\label{1dpoisson}
The first test problem is a one-dimensional Helmholtz equation on the domain $\Omega = [0,1]$,
\begin{equation}
\begin{aligned}
u_{xx}-\lambda u &= f(x),\\
u(0)&= \sin(0.4\pi)\cos(0.4\pi),\\
u(1)&= \sin(4.4\pi)\cos(4.4\pi), \nonumber
\end{aligned}
\end{equation}
where the $\lambda$ is a constant and $f(x)$ is a prescribed source term,
with the manufactured exact solution
\begin{equation}
u(x)= \sin(4\pi(x+0.1))\cos(4\pi(x+0.1)).\nonumber
\end{equation}
\end{example}

\begin{table}[h]
	\centering
\begin{tabular}{|l|l|ll|ll|ll|}
\hline
\multirow{2}{*}{parameter}    & $h$    & \multicolumn{2}{c|}{$2^{-2}$}                                & \multicolumn{2}{c|}{$2^{-3}$}                                & \multicolumn{2}{c|}{$2^{-4}$}                                \\ \cline{2-8} 
                              & \diagbox[width=5em,trim=l] {${\rm DoF}_K$}{Norm}  & \multicolumn{1}{c|}{$L^{2}$}  & \multicolumn{1}{c|}{$H^{1}$} & \multicolumn{1}{c|}{$L^{2}$}  & \multicolumn{1}{c|}{$H^{1}$} & \multicolumn{1}{c|}{$L^{2}$}  & \multicolumn{1}{c|}{$H^{1}$} \\ \hline
\multirow{8}{*}{$\lambda=10$} & 10   & \multicolumn{1}{l|}{1.63E-02} & 1.23E-00                     & \multicolumn{1}{l|}{4.31E-02} & 4.72E-00                     & \multicolumn{1}{l|}{3.33E-03} & 5.15E-01                     \\ \cline{2-8} 
                              & 20   & \multicolumn{1}{l|}{3.99E-05} & 6.81E-03                     & \multicolumn{1}{l|}{3.22E-05} & 9.27E-03                     & \multicolumn{1}{l|}{1.81E-06} & 1.23E-03                     \\ \cline{2-8} 
                              & 40   & \multicolumn{1}{l|}{1.65E-07} & 5.83E-05                     & \multicolumn{1}{l|}{3.28E-08} & 2.03E-05                     & \multicolumn{1}{l|}{1.55E-08} & 1.64E-05                     \\ \cline{2-8} 
                              & 80   & \multicolumn{1}{l|}{1.33E-08} & 4.48E-06                     & \multicolumn{1}{l|}{2.67E-09} & 1.80E-06                     & \multicolumn{1}{l|}{2.84E-10} & 4.29E-07                     \\ \cline{2-8} 
                              & 160  & \multicolumn{1}{l|}{2.55E-09} & 1.16E-06                     & \multicolumn{1}{l|}{5.34E-10} & 4.99E-07                     & \multicolumn{1}{l|}{7.33E-11} & 1.37E-07                     \\ \cline{2-8} 
                              & 320  & \multicolumn{1}{l|}{1.96E-09} & 9.54E-07                     & \multicolumn{1}{l|}{2.69E-10} & 2.69E-07                     & \multicolumn{1}{l|}{5.20E-11} & 1.01E-07                     \\ \cline{2-8} 
                              & 640  & \multicolumn{1}{l|}{1.37E-09} & 6.68E-07                     & \multicolumn{1}{l|}{1.94E-10} & 1.96E-07                     & \multicolumn{1}{l|}{4.07E-11} & 8.34E-08                     \\ \cline{2-8} 
                              & 1280 & \multicolumn{1}{l|}{9.61E-10} & 4.79E-07                     & \multicolumn{1}{l|}{1.94E-10} & 2.02E-07                     & \multicolumn{1}{l|}{3.30E-11} & 7.10E-08                     \\ \hline
\multirow{8}{*}{$\lambda=1$}  & 10   & \multicolumn{1}{l|}{1.45E-02} & 1.24E-00                     & \multicolumn{1}{l|}{2.26E-02} & 1.98E-00                     & \multicolumn{1}{l|}{1.57E-03} & 3.08E-01                     \\ \cline{2-8} 
                              & 20   & \multicolumn{1}{l|}{4.00E-05} & 8.60E-03                     & \multicolumn{1}{l|}{3.93E-05} & 1.22E-02                     & \multicolumn{1}{l|}{1.26E-06} & 8.23E-04                     \\ \cline{2-8} 
                              & 40   & \multicolumn{1}{l|}{1.89E-07} & 6.41E-05                     & \multicolumn{1}{l|}{2.20E-07} & 1.11E-04                     & \multicolumn{1}{l|}{5.92E-09} & 6.52E-06                     \\ \cline{2-8} 
                              & 80   & \multicolumn{1}{l|}{1.03E-08} & 3.68E-06                     & \multicolumn{1}{l|}{1.14E-09} & 8.20E-07                     & \multicolumn{1}{l|}{2.22E-10} & 3.13E-07                     \\ \cline{2-8} 
                              & 160  & \multicolumn{1}{l|}{1.98E-09} & 8.13E-07                     & \multicolumn{1}{l|}{4.55E-10} & 3.79E-07                     & \multicolumn{1}{l|}{1.16E-10} & 1.95E-07                     \\ \cline{2-8} 
                              & 320  & \multicolumn{1}{l|}{1.72E-09} & 7.89E-07                     & \multicolumn{1}{l|}{3.27E-10} & 2.89E-07                     & \multicolumn{1}{l|}{5.09E-11} & 8.83E-08                     \\ \cline{2-8} 
                              & 640  & \multicolumn{1}{l|}{1.10E-09} & 4.94E-07                     & \multicolumn{1}{l|}{2.66E-10} & 2.35E-07                     & \multicolumn{1}{l|}{6.58E-11} & 1.16E-07                     \\ \cline{2-8} 
                              & 1280 & \multicolumn{1}{l|}{1.26E-09} & 5.49E-07                     & \multicolumn{1}{l|}{2.32E-10} & 2.11E-07                     & \multicolumn{1}{l|}{4.92E-11} & 9.27E-08                     \\ \hline
\end{tabular}
\caption{Errors of the LRNN-DG method for 1-d Helmholtz equation in Example \ref{1dpoisson}.}
\label{tablelrnndg}
\end{table}

\begin{table}[h]
	\centering
\begin{tabular}{|l|l|ll|ll|ll|}
\hline
\multirow{2}{*}{parameter}    & $h$    & \multicolumn{2}{c|}{$2^{-2}$}                                & \multicolumn{2}{c|}{$2^{-3}$}                                & \multicolumn{2}{c|}{$2^{-4}$}                                \\ \cline{2-8} 
                              & \diagbox[width=5em,trim=l] {${\rm DoF}_K$}{Norm}  & \multicolumn{1}{c|}{$L^{2}$}  & \multicolumn{1}{c|}{$H^{1}$} & \multicolumn{1}{c|}{$L^{2}$}  & \multicolumn{1}{c|}{$H^{1}$} & \multicolumn{1}{c|}{$L^{2}$}  & \multicolumn{1}{c|}{$H^{1}$} \\ \hline
\multirow{8}{*}{$\lambda=10$} & 10   & \multicolumn{1}{l|}{1.91E-02} & 1.47E-00                     & \multicolumn{1}{l|}{2.24E-02} & 2.45E-00                     & \multicolumn{1}{l|}{3.83E-03} & 5.42E-01                     \\ \cline{2-8} 
                              & 20   & \multicolumn{1}{l|}{6.60E-05} & 1.37E-02                     & \multicolumn{1}{l|}{2.15E-05} & 5.39E-03                     & \multicolumn{1}{l|}{9.20E-06} & 6.43E-03                     \\ \cline{2-8} 
                              & 40   & \multicolumn{1}{l|}{1.30E-07} & 3.48E-05                     & \multicolumn{1}{l|}{4.56E-08} & 2.82E-05                     & \multicolumn{1}{l|}{1.46E-08} & 1.65E-05                     \\ \cline{2-8} 
                              & 80   & \multicolumn{1}{l|}{5.22E-09} & 2.04E-06                     & \multicolumn{1}{l|}{7.38E-10} & 5.49E-07                     & \multicolumn{1}{l|}{1.48E-10} & 2.16E-07                     \\ \cline{2-8} 
                              & 160  & \multicolumn{1}{l|}{5.13E-10} & 2.35E-07                     & \multicolumn{1}{l|}{1.21E-10} & 1.15E-07                     & \multicolumn{1}{l|}{5.32E-11} & 9.81E-08                     \\ \cline{2-8} 
                              & 320  & \multicolumn{1}{l|}{4.25E-10} & 2.11E-07                     & \multicolumn{1}{l|}{8.85E-11} & 8.93E-08                     & \multicolumn{1}{l|}{3.12E-11} & 6.35E-08                     \\ \cline{2-8} 
                              & 640  & \multicolumn{1}{l|}{2.65E-10} & 1.40E-07                     & \multicolumn{1}{l|}{7.32E-11} & 7.86E-08                     & \multicolumn{1}{l|}{2.60E-11} & 5.44E-08                     \\ \cline{2-8} 
                              & 1280 & \multicolumn{1}{l|}{3.12E-10} & 1.63E-07                     & \multicolumn{1}{l|}{6.48E-11} & 6.90E-08                     & \multicolumn{1}{l|}{2.24E-11} & 5.03E-08                     \\ \hline
\multirow{8}{*}{$\lambda=1$}  & 10   & \multicolumn{1}{l|}{1.97E-02} & 1.47E-00                     & \multicolumn{1}{l|}{3.81E-02} & 3.58E+00                     & \multicolumn{1}{l|}{4.21E-03} & 5.42E-01                     \\ \cline{2-8} 
                              & 20   & \multicolumn{1}{l|}{6.54E-05} & 1.36E-02                     & \multicolumn{1}{l|}{2.18E-05} & 5.45E-03                     & \multicolumn{1}{l|}{9.24E-06} & 6.45E-03                     \\ \cline{2-8} 
                              & 40   & \multicolumn{1}{l|}{1.31E-07} & 3.51E-05                     & \multicolumn{1}{l|}{4.60E-08} & 2.83E-05                     & \multicolumn{1}{l|}{1.46E-08} & 1.65E-05                     \\ \cline{2-8} 
                              & 80   & \multicolumn{1}{l|}{5.32E-09} & 2.07E-06                     & \multicolumn{1}{l|}{7.82E-10} & 5.94E-07                     & \multicolumn{1}{l|}{1.46E-10} & 2.14E-07                     \\ \cline{2-8} 
                              & 160  & \multicolumn{1}{l|}{4.94E-10} & 2.23E-07                     & \multicolumn{1}{l|}{9.95E-11} & 9.28E-08                     & \multicolumn{1}{l|}{5.28E-11} & 9.77E-08                     \\ \cline{2-8} 
                              & 320  & \multicolumn{1}{l|}{4.16E-10} & 2.08E-07                     & \multicolumn{1}{l|}{8.70E-11} & 8.82E-08                     & \multicolumn{1}{l|}{2.95E-11} & 6.05E-08                     \\ \cline{2-8} 
                              & 640  & \multicolumn{1}{l|}{2.67E-10} & 1.38E-07                     & \multicolumn{1}{l|}{6.47E-11} & 7.01E-08                     & \multicolumn{1}{l|}{2.68E-11} & 5.63E-08                     \\ \cline{2-8} 
                              & 1280 & \multicolumn{1}{l|}{3.16E-10} & 1.65E-07                     & \multicolumn{1}{l|}{7.07E-11} & 7.62E-08                     & \multicolumn{1}{l|}{1.90E-11} & 4.24E-08                     \\ \hline
\end{tabular}
\caption{Errors of the LRNN-$C^0$DG method for 1-d Helmholtz equation in Example \ref{1dpoisson}.}
\label{tablelc0rnndg}
\end{table}

\begin{table}[h]
	\centering
\begin{tabular}{|l|l|ll|ll|ll|}
\hline
\multirow{2}{*}{parameter}    & $h$    & \multicolumn{2}{c|}{$2^{-2}$}                                & \multicolumn{2}{c|}{$2^{-3}$}                                & \multicolumn{2}{c|}{$2^{-4}$}                                \\ \cline{2-8} 
                              & \diagbox[width=5em,trim=l] {${\rm DoF}_K$}{Norm}  & \multicolumn{1}{c|}{$L^{2}$}  & \multicolumn{1}{c|}{$H^{1}$} & \multicolumn{1}{c|}{$L^{2}$}  & \multicolumn{1}{c|}{$H^{1}$} & \multicolumn{1}{c|}{$L^{2}$}  & \multicolumn{1}{c|}{$H^{1}$} \\ \hline
\multirow{7}{*}{$\lambda=10$} & 20   & \multicolumn{1}{l|}{7.48E-05} & 1.23E-02                     & \multicolumn{1}{l|}{3.79E-06} & 1.39E-03                     & \multicolumn{1}{l|}{1.10E-06} & 7.26E-04                     \\ \cline{2-8} 
                              & 40   & \multicolumn{1}{l|}{1.34E-07} & 3.30E-05                     & \multicolumn{1}{l|}{2.94E-08} & 1.64E-05                     & \multicolumn{1}{l|}{2.37E-08} & 2.39E-05                     \\ \cline{2-8} 
                              & 80   & \multicolumn{1}{l|}{7.79E-09} & 2.91E-06                     & \multicolumn{1}{l|}{4.50E-10} & 3.39E-07                     & \multicolumn{1}{l|}{1.80E-10} & 2.42E-07                     \\ \cline{2-8} 
                              & 160  & \multicolumn{1}{l|}{6.59E-10} & 3.07E-07                     & \multicolumn{1}{l|}{9.30E-11} & 8.81E-08                     & \multicolumn{1}{l|}{4.51E-11} & 8.16E-08                     \\ \cline{2-8} 
                              & 320  & \multicolumn{1}{l|}{2.45E-10} & 1.20E-07                     & \multicolumn{1}{l|}{8.31E-11} & 8.07E-08                     & \multicolumn{1}{l|}{2.04E-11} & 4.14E-08                     \\ \cline{2-8} 
                              & 640  & \multicolumn{1}{l|}{1.99E-10} & 1.01E-07                     & \multicolumn{1}{l|}{6.52E-11} & 6.88E-08                     & \multicolumn{1}{l|}{1.30E-11} & 2.84E-08                     \\ \cline{2-8} 
                              & 1280 & \multicolumn{1}{l|}{1.02E-10} & 5.33E-08                     & \multicolumn{1}{l|}{5.82E-11} & 6.07E-08                     & \multicolumn{1}{l|}{1.67E-11} & 3.49E-08                     \\ \hline
\multirow{7}{*}{$\lambda=1$}  & 20   & \multicolumn{1}{l|}{1.10E-05} & 1.65E-03                     & \multicolumn{1}{l|}{1.71E-06} & 6.00E-04                     & \multicolumn{1}{l|}{1.84E-06} & 9.91E-04                     \\ \cline{2-8} 
                              & 40   & \multicolumn{1}{l|}{3.37E-08} & 8.38E-06                     & \multicolumn{1}{l|}{3.06E-08} & 1.58E-05                     & \multicolumn{1}{l|}{1.39E-09} & 1.55E-06                     \\ \cline{2-8} 
                              & 80   & \multicolumn{1}{l|}{6.75E-09} & 2.20E-06                     & \multicolumn{1}{l|}{3.84E-10} & 2.61E-07                     & \multicolumn{1}{l|}{1.44E-10} & 1.94E-07                     \\ \cline{2-8} 
                              & 160  & \multicolumn{1}{l|}{7.99E-10} & 2.98E-07                     & \multicolumn{1}{l|}{2.03E-10} & 1.60E-07                     & \multicolumn{1}{l|}{4.44E-11} & 6.99E-08                     \\ \cline{2-8} 
                              & 320  & \multicolumn{1}{l|}{6.20E-10} & 2.59E-07                     & \multicolumn{1}{l|}{1.10E-10} & 8.98E-08                     & \multicolumn{1}{l|}{2.63E-11} & 4.35E-08                     \\ \cline{2-8} 
                              & 640  & \multicolumn{1}{l|}{2.62E-10} & 1.11E-07                     & \multicolumn{1}{l|}{9.73E-11} & 8.11E-08                     & \multicolumn{1}{l|}{2.19E-11} & 3.83E-08                     \\ \cline{2-8} 
                              & 1280 & \multicolumn{1}{l|}{1.80E-10} & 7.86E-08                     & \multicolumn{1}{l|}{8.20E-11} & 7.23E-08                     & \multicolumn{1}{l|}{1.64E-11} & 2.90E-08                     \\ \hline
\end{tabular}
\caption{Errors of the LRNN-$C^1$DG method for 1-d Helmholtz equation in Example \ref{1dpoisson}.}
\label{tablelrnnc1dg}
\end{table}

\begin{figure}[!ht]  
    \centering   
    
    \begin{subfigure}{0.34\textwidth}
            \centering          
            \includegraphics[width=\textwidth]{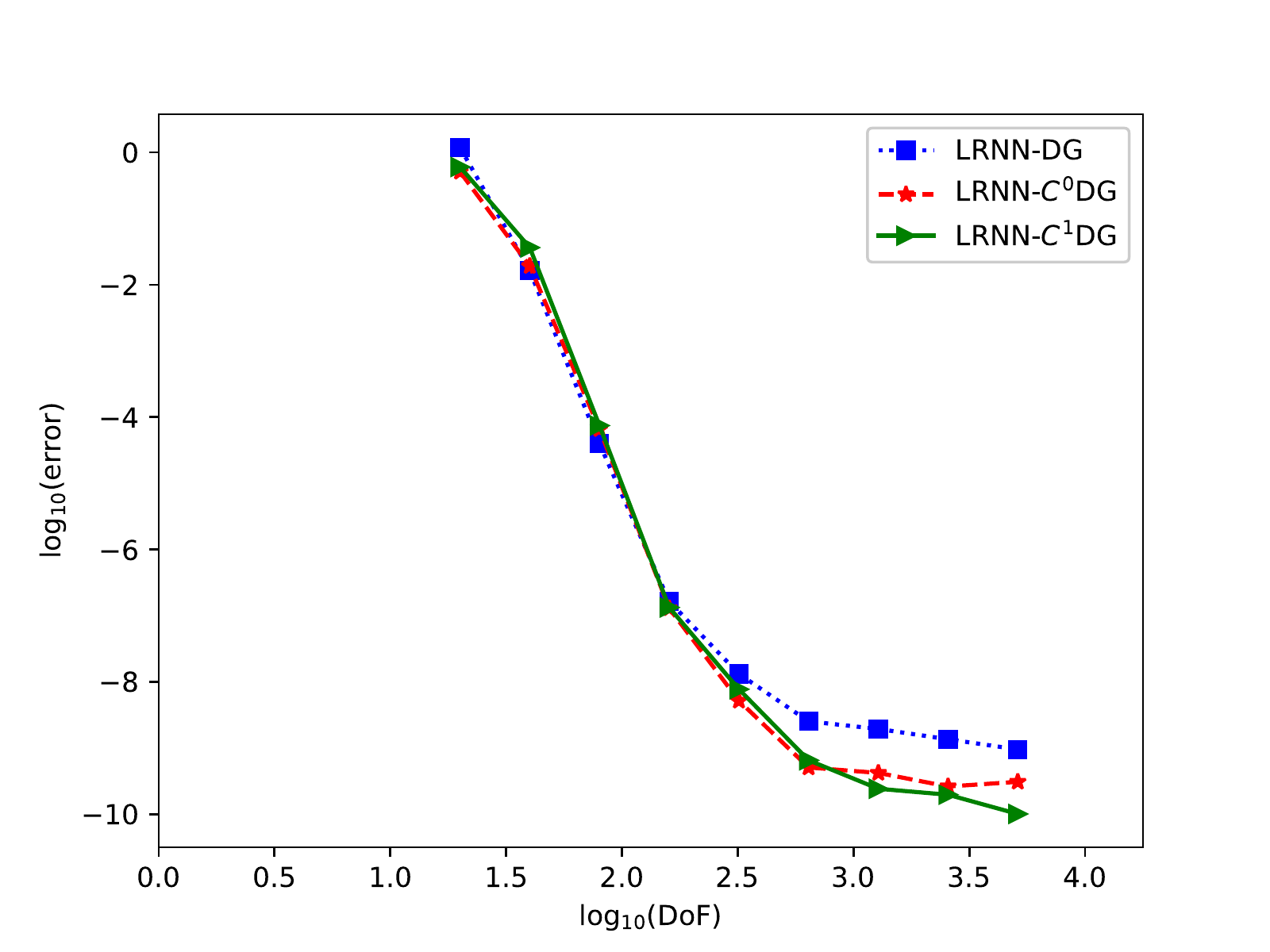}   
            \caption{$L^2$ errors with $h = 2^{-2}$}
            \label{fig:a}
    \end{subfigure}
    \begin{subfigure}{0.34\textwidth}
            \centering          
            \includegraphics[width=\textwidth]{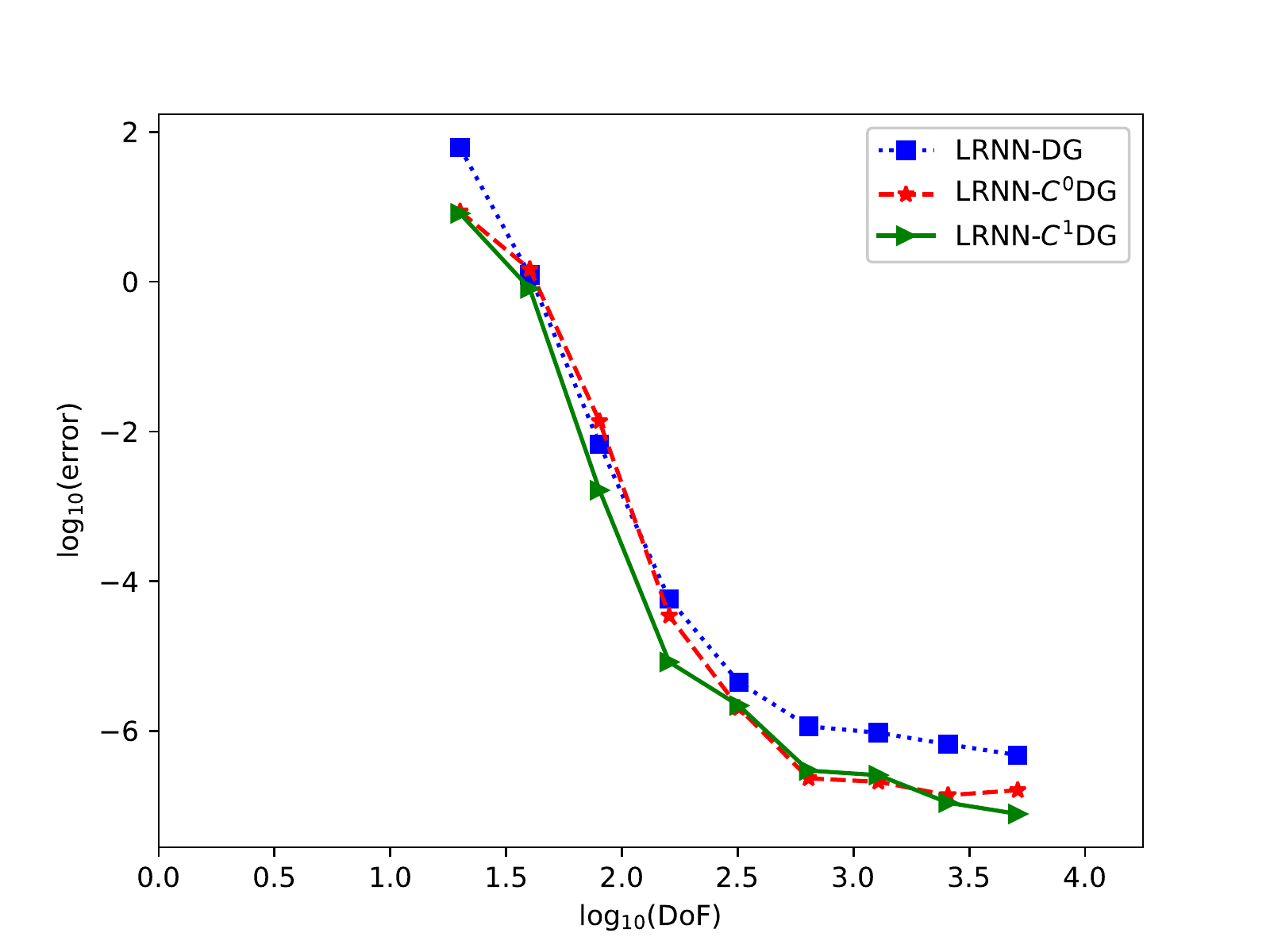}   
            \caption{$H^1$ errors with $h = 2^{-2}$}
            \label{fig:a}
    \end{subfigure}\\
    \begin{subfigure}{0.34\textwidth}
            \centering       
            \includegraphics[width=\textwidth]{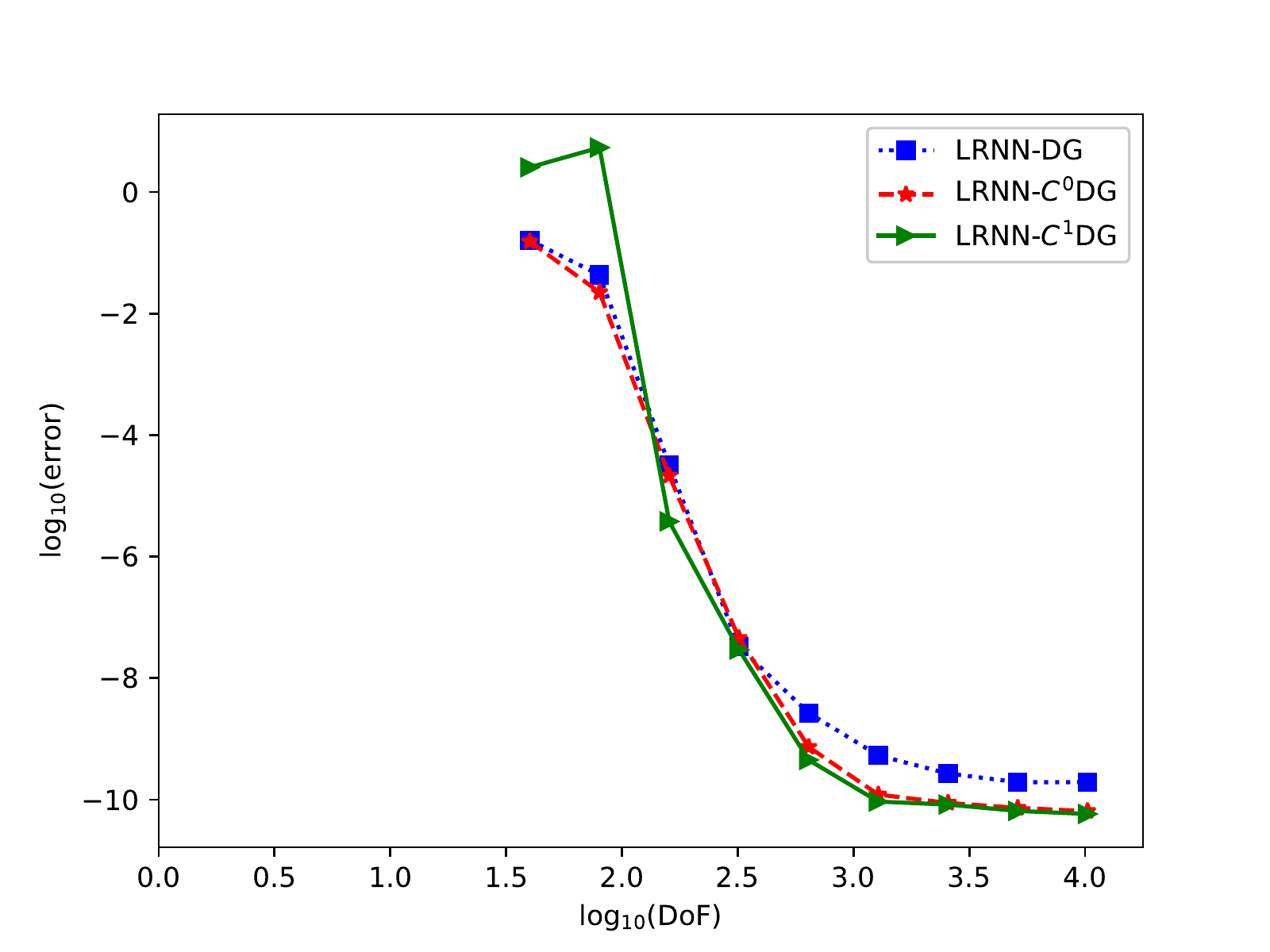}   
		 \caption{$L^2$ errors with $h = 2^{-3}$}
            \label{fig:b}
        \end{subfigure} 
        \begin{subfigure}{0.34\textwidth}
            \centering           
            \includegraphics[width=\textwidth]{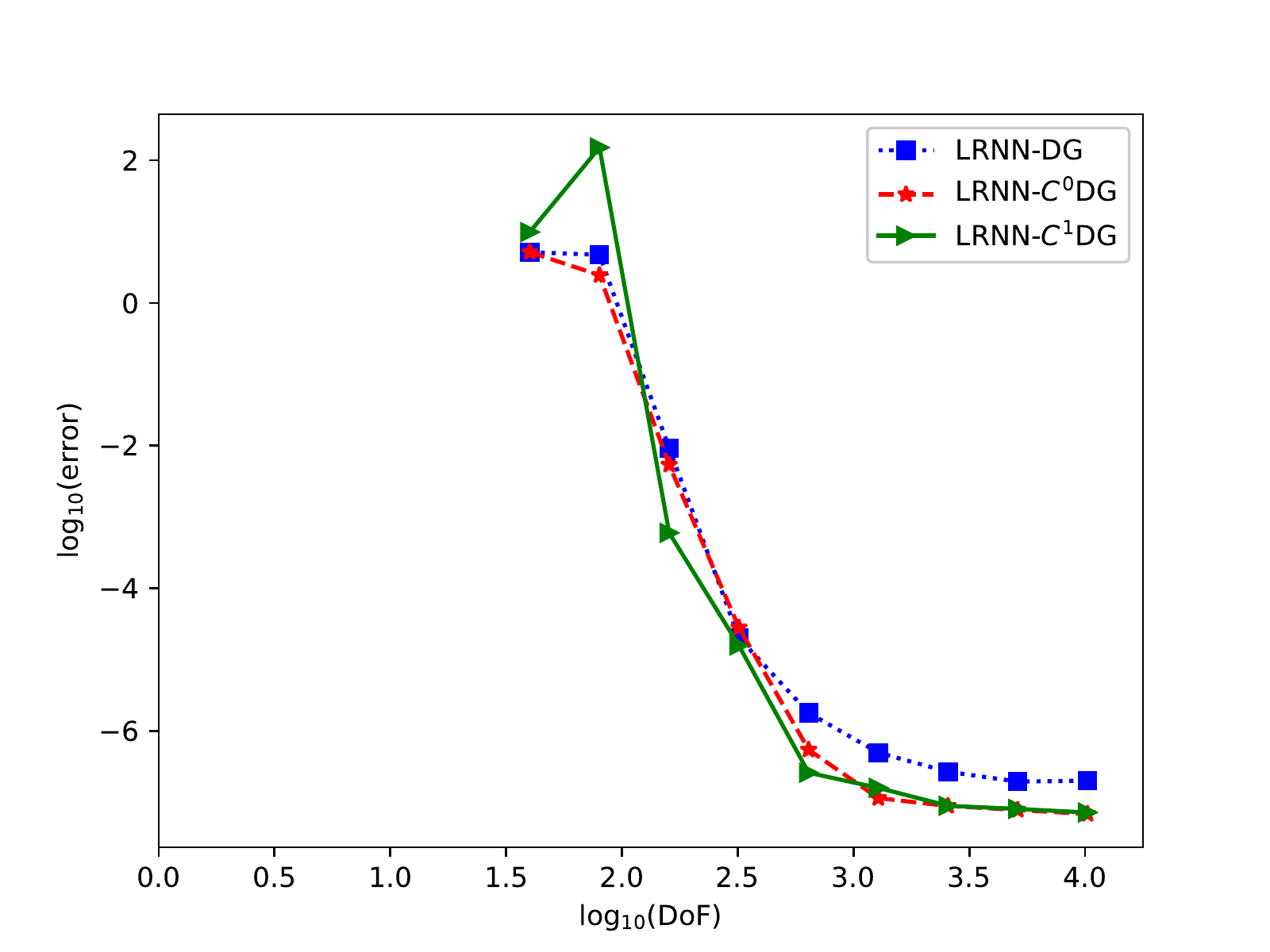}   
            \caption{$H^1$ errors with $h = 2^{-3}$}
            \label{fig:a}
    \end{subfigure}  
    \caption{Comparison of the errors obtained by the three methods in Example \ref{1dpoisson} for $\lambda$=10.} 
    \label{1dpoissonlambda10}
\end{figure}

\begin{figure}[htb]  
    \centering    
    
    \begin{subfigure}{0.34\textwidth}
            \centering           
            \includegraphics[width=\textwidth]{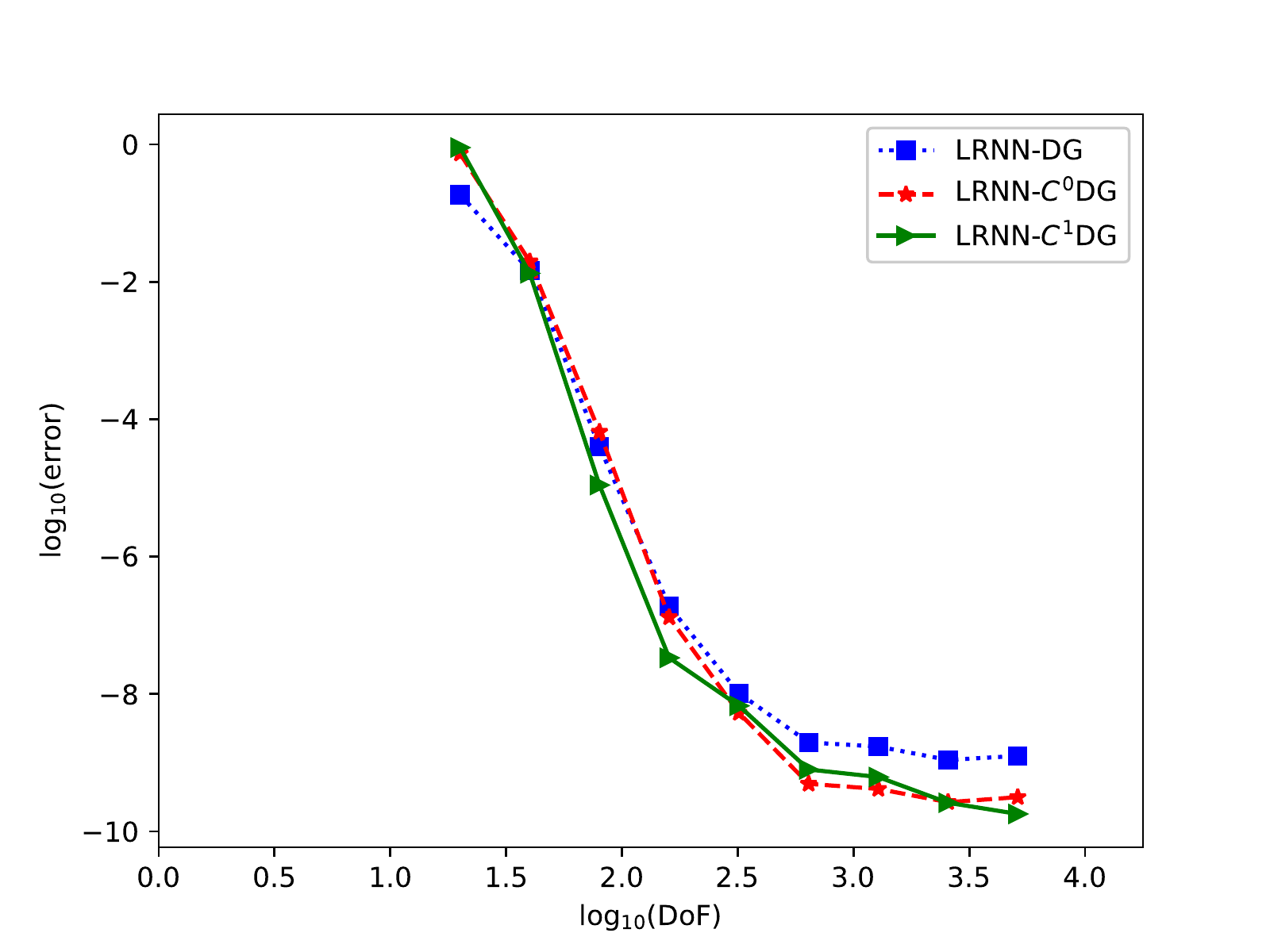}   
            \caption{$L^2$ errors with $h = 2^{-2}$}
            \label{fig:a}
    \end{subfigure}
    \begin{subfigure}{0.34\textwidth}
            \centering           
            \includegraphics[width=\textwidth]{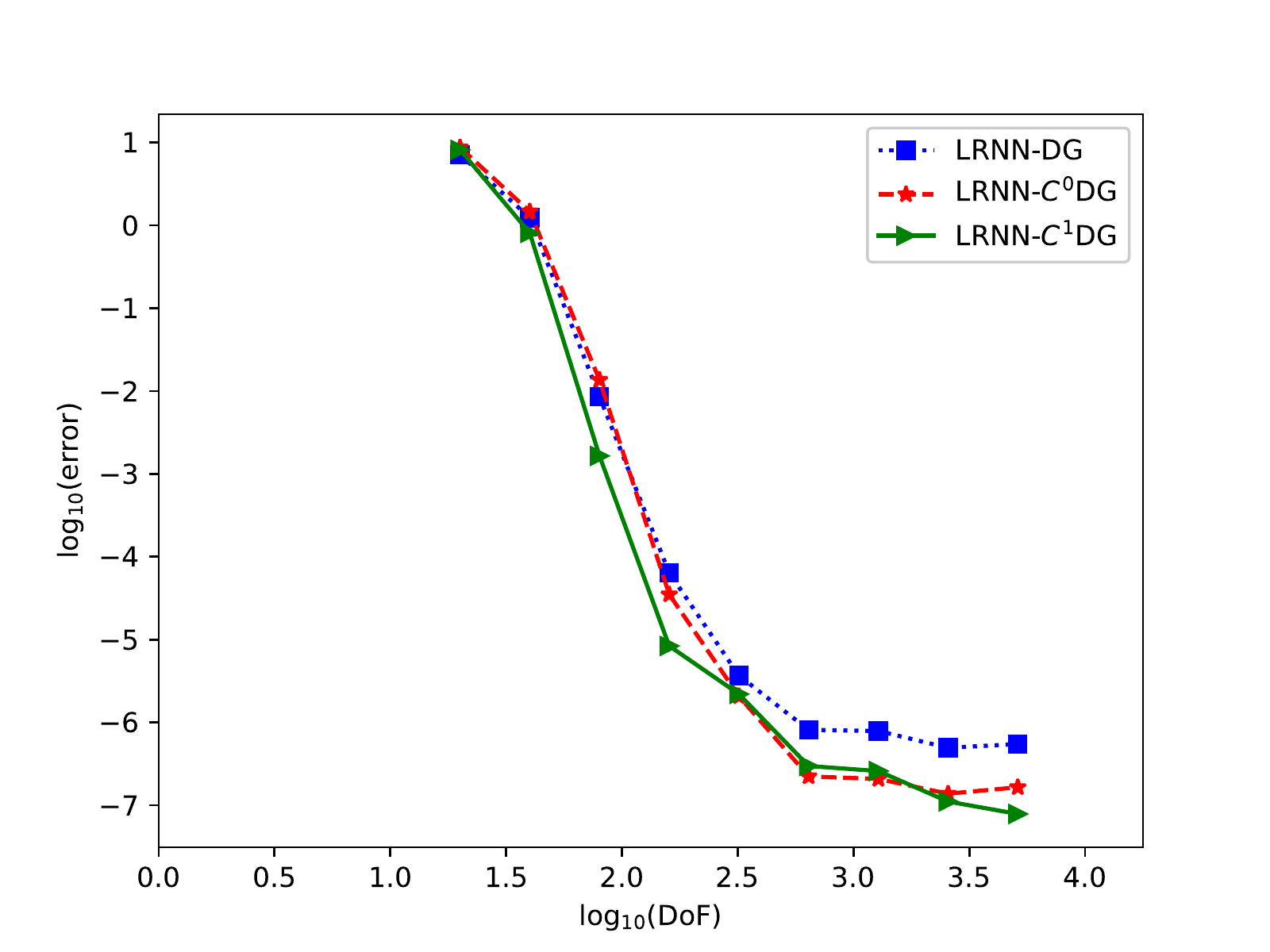}   
            \caption{$H^1$ errors with $h = 2^{-2}$}
            \label{fig:a}
    \end{subfigure}\\
    \begin{subfigure}{0.34\textwidth}
            \centering       
            \includegraphics[width=\textwidth]{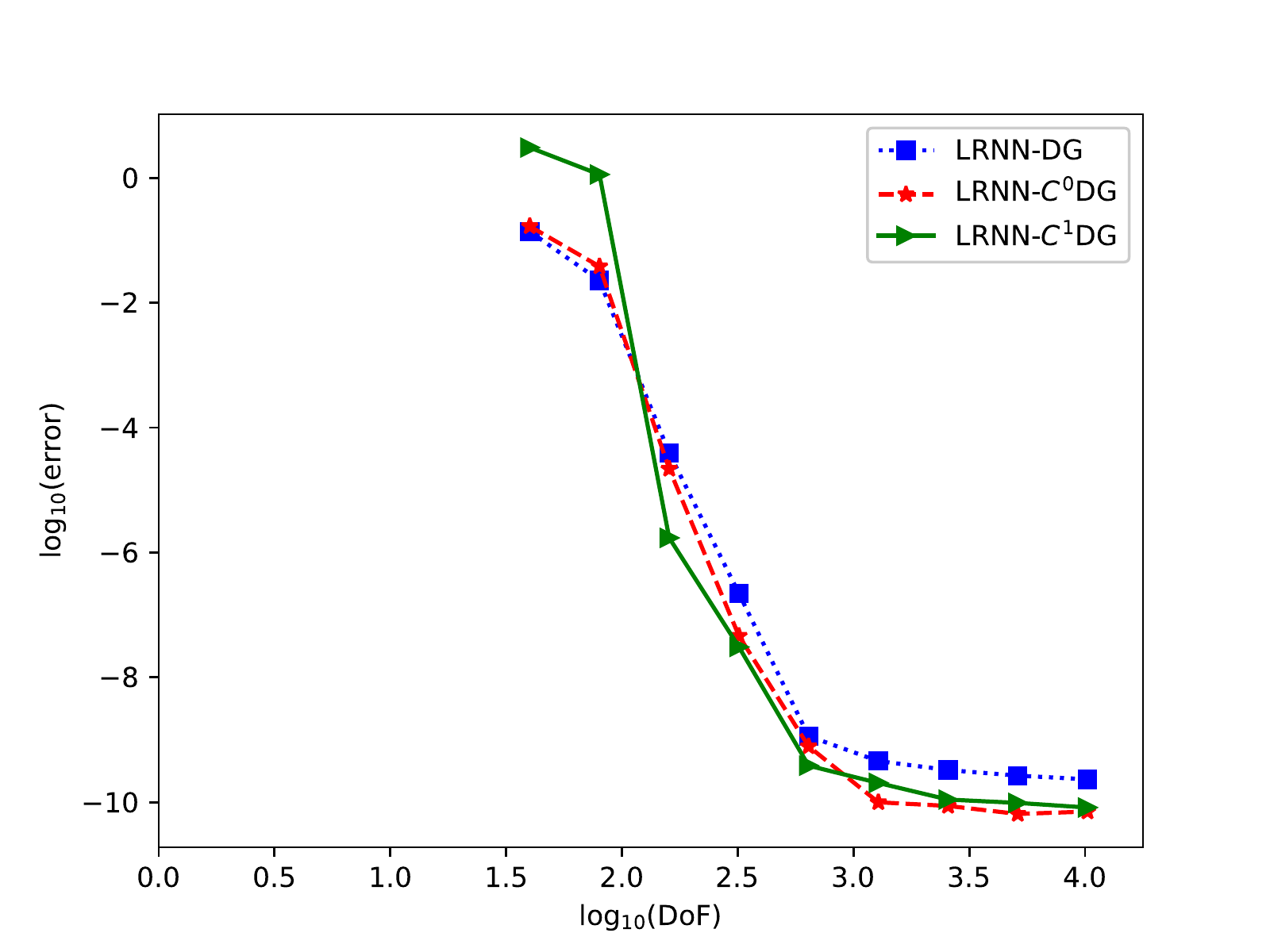}   
		 \caption{$L^2$ errors with $h = 2^{-3}$}
            \label{fig:b}
        \end{subfigure} 
        \begin{subfigure}{0.34\textwidth}
            \centering           
            \includegraphics[width=\textwidth]{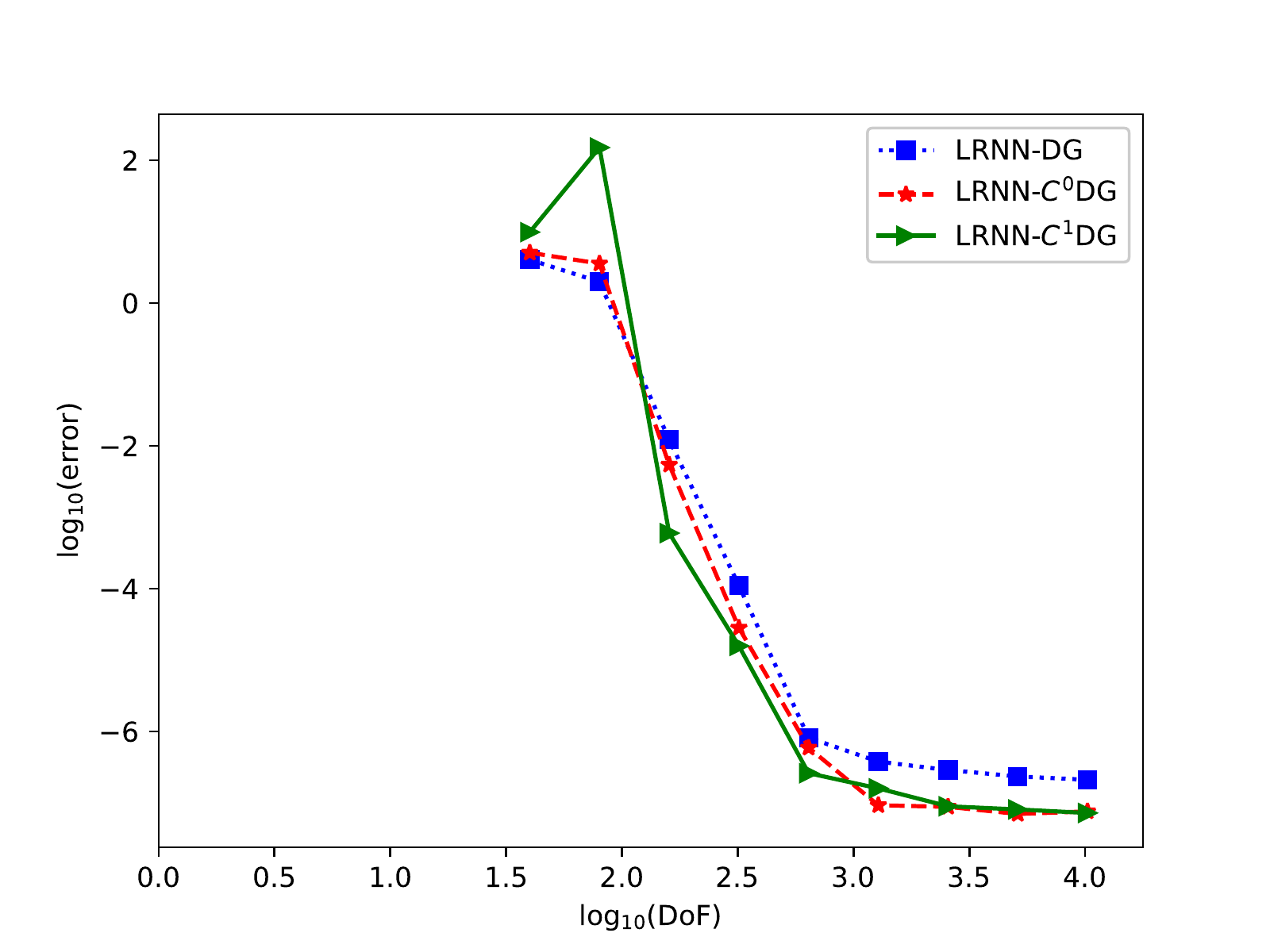}   
            \caption{$H^1$ errors with $h = 2^{-3}$}
            \label{fig:a}
    \end{subfigure}  
    \caption{Comparison of the errors obtained by the three methods in Example \ref{1dpoisson} for $\lambda$=1.} 
    \label{1dpoissonlambda1}
\end{figure}

We partition the interval $\Omega$ into non-overlapping uniform subintervals of size $h$ and choose the source term $f$ such that the solution form as given above satisfies this boundary value problem. The numerical errors in $L^2$ norm and $H^1$ seminorm for different numbers of degrees of freedom are shown in Table \ref{tablelrnndg}, Table \ref{tablelc0rnndg} and Table \ref{tablelrnnc1dg}.

Table \ref{tablelrnndg} lists the LRNN-DG errors with respect to the number of degrees of freedom on each element and the size of the subinterval. In this group of tests, the penalty parameter $\eta$ is set to be 0.0625 and 70 for $\lambda$ = 10, 1, respectively. And the the parameter $w_0$ of the uniform distribution is chosen as 5.5 and 4.8 for $\lambda$ = 10, 1, respectively. One can observe that for fixed $h$ the errors decrease rapidly with increasing numbers of degrees of freedom initially, and then the reduction slows down.
For fixed degrees of freedom per subinterval,
one can observe a general decrease in the errors as the subinterval size $h$ decreases.

Table \ref{tablelc0rnndg} lists the corresponding errors of LRNN-$C^0$DG in terms of the number of DoF per interval and the size of the element. In this group of tests, the parameter $w_0$ of the uniform distribution is chosen to equal 5.5 for $\lambda$ = 10, 1. The trend of errors with the size $h$ and the ${\rm DoF}_K$ is similar to that of LRNN-DG.

Table \ref{tablelrnnc1dg} shows the errors of LRNN-$C^1$DG in terms of the DoF per element and the size of the element. In this group of tests, the parameter $w_0$ of the uniform distribution is chosen to equal 5.5 and 4.5 for $\lambda$ = 10, 1, respectively. The trend of errors with the size $h$ and the ${\rm DoF}_K$ is similar to that of LRNN-DG. 

Figure \ref{1dpoissonlambda10} and Figure \ref{1dpoissonlambda1} are comparisons of the errors of the three methods for different sizes $h$ and different norms for $\lambda=10,1$ respectively. In general, we can see the performance of the three schemes are similar, LRNN-$C^1$DG is better than LRNN-$C^0$DG and LRNN-$C^0$DG is better than LRNN-DG. But we need to note that there are more equations in the last two methods, so they should offer better performance.

\begin{example}[Two-Dimensional Poisson Equation]
\label{2dpoisson}
Consider a two-dimensional Poisson equation with Dirichlet boundary condition,
\begin{equation*}
\begin{aligned}
- \Delta u&= f\quad &{\rm in}\ \Omega,\\
u&=g\quad &{\rm on}\ \partial\Omega. 
\end{aligned}
\end{equation*}
The exact solution $u = e^{x+y}sin(3\pi x+0.5\pi)cos(\pi y+0.2\pi) $ with $\Omega=[0,1]^2$.
\end{example}

The numerical errors in the $L^2$ norm and the $H^1$ seminorm for the different number of degrees of freedom are shown in Table \ref{table2dpoissonlrnndg}, Table \ref{table2dpoissonlrnnc0dg} and Table \ref{table2dpoissonlrnnc1dg}. For this problem, we also compare the proposed methods with the finite element method and the discontinuous Galerkin method. In this example, we partition the domain $\Omega$ into non-overlapping uniform square elements with the edge length $h$. The number of collection points on each edge used in LRNN-$C^0$DG and LRNN-$C^1$DG is 70.

%
\begin{table}[h]
\centering
\begin{tabular}{|l|ll|ll|ll|}
\hline
h   & \multicolumn{2}{c|}{$2^{-1}$}                                & \multicolumn{2}{c|}{$2^{-2}$}                                & \multicolumn{2}{c|}{$2^{-3}$}                                \\ \hline
\diagbox[width=5em,trim=l] {${\rm DoF}_K$}{Norm} & \multicolumn{1}{c|}{$L^{2}$}  & \multicolumn{1}{c|}{$H^{1}$} & \multicolumn{1}{c|}{$L^{2}$}  & \multicolumn{1}{c|}{$H^{1}$} & \multicolumn{1}{c|}{$L^{2}$}  & \multicolumn{1}{c|}{$H^{1}$} \\ \hline
10  & \multicolumn{1}{l|}{6.24E-01}  & 9.22E-00                    & \multicolumn{1}{l|}{1.16E-01} & 3.13E-00                    & \multicolumn{1}{l|}{5.77E-02} & 2.24E-00                     \\ \hline
20  & \multicolumn{1}{l|}{7.25E-02}  & 1.70E-00                    & \multicolumn{1}{l|}{1.72E-02} & 8.16E-01                     & \multicolumn{1}{l|}{2.35E-03} & 2.46E-01                    \\ \hline
40  & \multicolumn{1}{l|}{2.42E-03}  & 1.07E-01                     & \multicolumn{1}{l|}{1.77E-04} & 1.57E-02                     & \multicolumn{1}{l|}{4.41E-05} & 8.77E-03                     \\ \hline
80  & \multicolumn{1}{l|}{6.00E-05}  & 3.72E-03                     & \multicolumn{1}{l|}{2.71E-06} & 4.23E-04                     & \multicolumn{1}{l|}{7.35E-07} & 2.65E-04                     \\ \hline
160 & \multicolumn{1}{l|}{1.41E-05} & 1.05E-03                     & \multicolumn{1}{l|}{5.54E-07} & 1.07E-04                     & \multicolumn{1}{l|}{1.49E-07} & 6.56E-05                     \\ \hline
320 & \multicolumn{1}{l|}{6.83E-06} & 4.31E-04                     & \multicolumn{1}{l|}{3.94E-07} & 9.14E-05                     & \multicolumn{1}{l|}{1.09E-07} & 5.69E-05                     \\ \hline
640 & \multicolumn{1}{l|}{5.26E-06} & 4.18E-04                    & \multicolumn{1}{l|}{5.18E-07} & 1.45E-04                     & \multicolumn{1}{l|}{1.31E-07} & 6.65E-05                     \\ \hline
\end{tabular}
\caption{Errors of the LRNN-DG method for 2-d Poisson equation in Example \ref{2dpoisson}.}
\label{table2dpoissonlrnndg}
\end{table}

\begin{table}[h]
\centering
\begin{tabular}{|l|ll|ll|ll|}
\hline
h   & \multicolumn{2}{c|}{$2^{-1}$}                                & \multicolumn{2}{c|}{$2^{-2}$}                                & \multicolumn{2}{c|}{$2^{-3}$}                                \\ \hline
\diagbox[width=5em,trim=l] {${\rm DoF}_K$}{Norm} & \multicolumn{1}{c|}{$L^{2}$}  & \multicolumn{1}{c|}{$H^{1}$} & \multicolumn{1}{c|}{$L^{2}$}  & \multicolumn{1}{c|}{$H^{1}$} & \multicolumn{1}{c|}{$L^{2}$}  & \multicolumn{1}{c|}{$H^{1}$} \\ \hline
10  & \multicolumn{1}{l|}{7.83E-01} & 8.57E-00                     & \multicolumn{1}{l|}{4.66E-01} & 4.85E-00                     & \multicolumn{1}{l|}{8.31E-01} & 8.02E-00                     \\ \hline
20  & \multicolumn{1}{l|}{7.57E-01} & 7.02E-00                     & \multicolumn{1}{l|}{6.35E-02} & 1.28E-00                     & \multicolumn{1}{l|}{1.27E-02} & 4.48E-01                     \\ \hline
40  & \multicolumn{1}{l|}{3.81E-02} & 7.24E-01                     & \multicolumn{1}{l|}{6.41E-04} & 2.45E-02                     & \multicolumn{1}{l|}{1.90E-04} & 1.40E-02                     \\ \hline
80  & \multicolumn{1}{l|}{1.48E-04} & 4.49E-03                     & \multicolumn{1}{l|}{3.89E-06} & 2.47E-04                     & \multicolumn{1}{l|}{3.70E-07} & 4.60E-05                     \\ \hline
160 & \multicolumn{1}{l|}{5.35E-06} & 2.11E-04                     & \multicolumn{1}{l|}{9.38E-08} & 7.29E-06                     & \multicolumn{1}{l|}{1.12E-08} & 1.71E-06                     \\ \hline
320 & \multicolumn{1}{l|}{1.24E-06} & 5.25E-05                     & \multicolumn{1}{l|}{5.57E-08} & 4.52E-06                     & \multicolumn{1}{l|}{6.59E-09} & 1.07E-06                     \\ \hline
640 & \multicolumn{1}{l|}{1.27E-06} & 5.28E-05                     & \multicolumn{1}{l|}{4.29E-08} & 3.60E-06                     & \multicolumn{1}{l|}{7.52E-09} & 1.26E-06                     \\ \hline
\end{tabular}
\caption{Errors of the LRNN-$C^0$DG method for 2-d Poisson equation in Example \ref{2dpoisson}.}
\label{table2dpoissonlrnnc0dg}
\end{table}

\begin{table}[h]
\centering
\begin{tabular}{|l|ll|ll|ll|}
\hline
h   & \multicolumn{2}{c|}{$2^{-2}$}                                & \multicolumn{2}{c|}{$2^{-3}$}                                & \multicolumn{2}{c|}{$2^{-3}$}                                \\ \hline
\diagbox[width=5em,trim=l] {${\rm DoF}_K$}{Norm}  & \multicolumn{1}{c|}{$L^{2}$}  & \multicolumn{1}{c|}{$H^{1}$} & \multicolumn{1}{c|}{$L^{2}$}  & \multicolumn{1}{c|}{$H^{1}$} & \multicolumn{1}{c|}{$L^{2}$}  & \multicolumn{1}{c|}{$H^{1}$} \\ \hline
10  & \multicolumn{1}{l|}{1.02E-00} & 1.11E+01                     & \multicolumn{1}{l|}{6.09E-01} & 7.28E-00                     & \multicolumn{1}{l|}{1.16E-00} & 1.12E+01                     \\ \hline
20  & \multicolumn{1}{l|}{2.97E-01} & 3.80E-00                     & \multicolumn{1}{l|}{1.17E-01} & 2.83E-00                     & \multicolumn{1}{l|}{4.39E-02} & 1.49E-00                     \\ \hline
40  & \multicolumn{1}{l|}{3.61E-02} & 8.29E-01                     & \multicolumn{1}{l|}{5.63E-03} & 2.30E-01                     & \multicolumn{1}{l|}{1.57E-03} & 1.26E-01                     \\ \hline
80  & \multicolumn{1}{l|}{6.70E-04} & 2.39E-02                     & \multicolumn{1}{l|}{1.03E-04} & 7.09E-03                     & \multicolumn{1}{l|}{2.00E-05} & 2.72E-03                     \\ \hline
160 & \multicolumn{1}{l|}{2.17E-06} & 1.20E-04                     & \multicolumn{1}{l|}{6.86E-08} & 7.38E-06                     & \multicolumn{1}{l|}{1.19E-08} & 2.54E-06                     \\ \hline
320 & \multicolumn{1}{l|}{1.45E-07} & 9.24E-06                     & \multicolumn{1}{l|}{1.05E-08} & 1.35E-06                     & \multicolumn{1}{l|}{3.28E-09} & 8.36E-07                     \\ \hline
640 & \multicolumn{1}{l|}{8.30E-08} & 5.83E-06                     & \multicolumn{1}{l|}{6.95E-09} & 9.54E-07                     & \multicolumn{1}{l|}{3.02E-09} & 8.39E-07                     \\ \hline
\end{tabular}
\caption{Errors of the LRNN-$C^1$DG method for 2-d Poisson equation in Example \ref{2dpoisson}.}
\label{table2dpoissonlrnnc1dg}
\end{table}

\begin{figure}[!ht] 
  \centering  
   
  \begin{subfigure}{0.34\textwidth}
      \centering      
      \includegraphics[width=\textwidth]{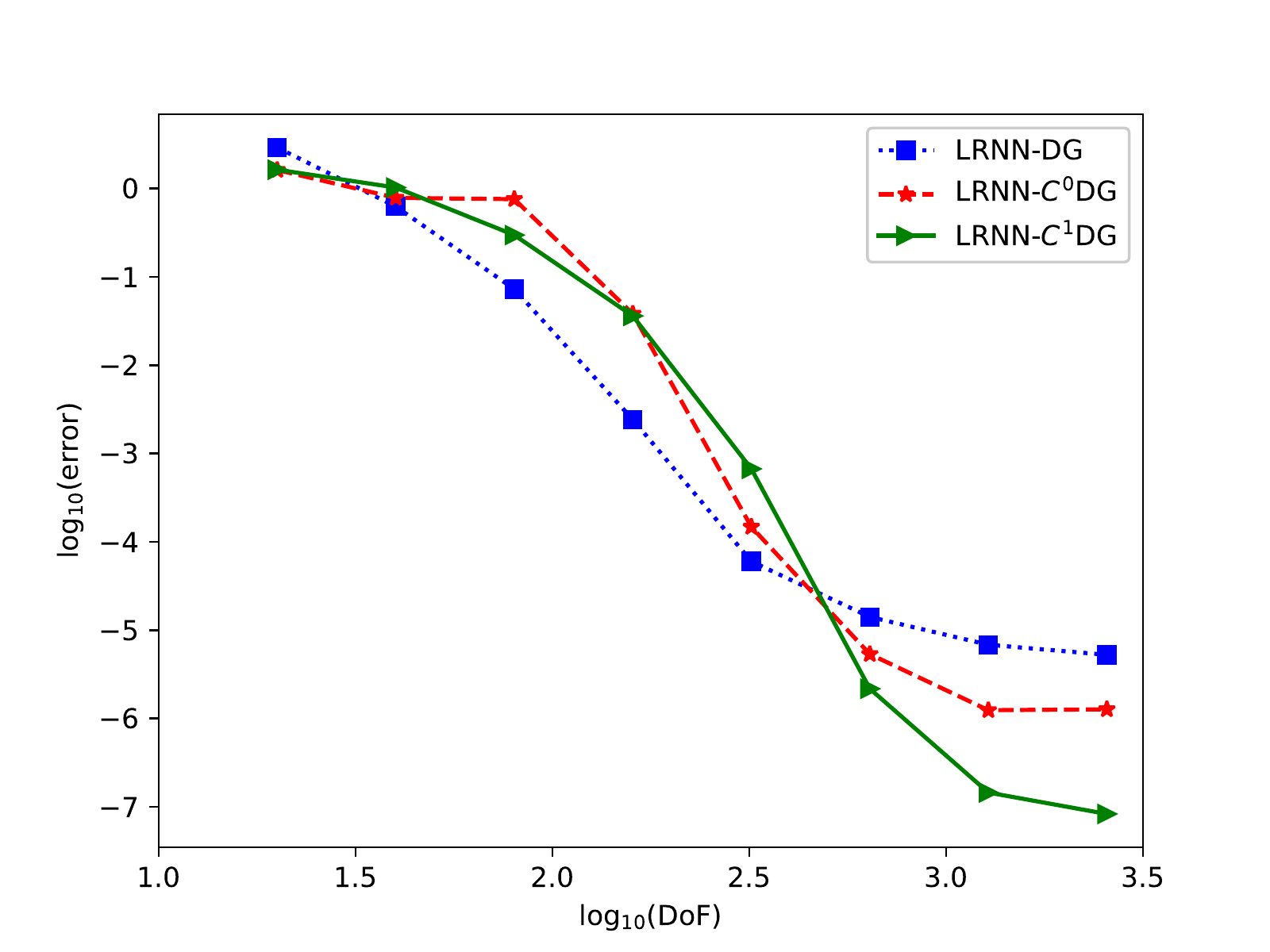}  
      \caption{$L^2$ errors with h=$2^{-1}$}
      \label{fig:a}
  \end{subfigure}
   \begin{subfigure}{0.34\textwidth}
      \centering      
      \includegraphics[width=\textwidth]{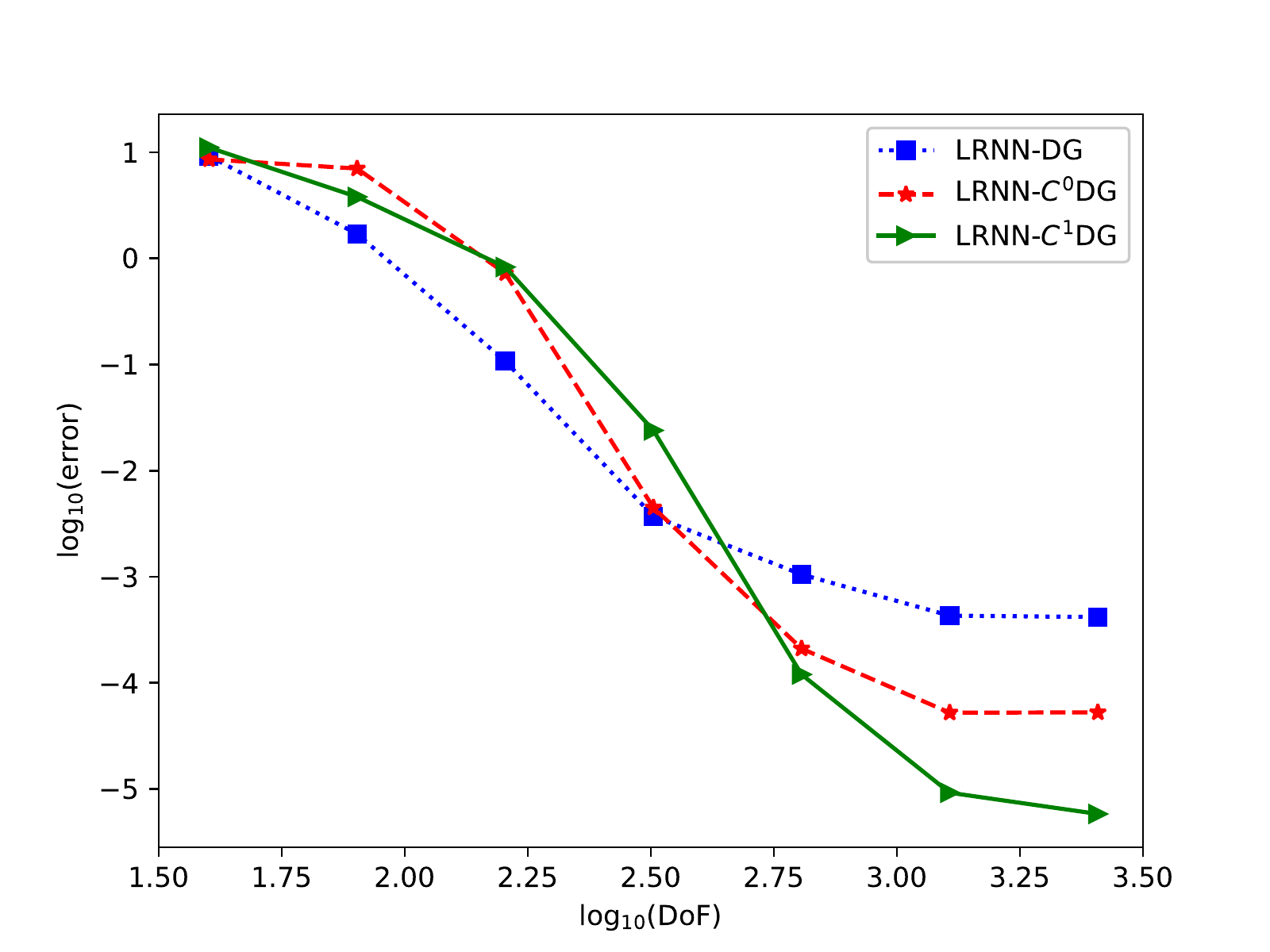}  
      \caption{$H^1$ errors with h=$2^{-1}$}
      \label{fig:b}
  \end{subfigure}\\
  \begin{subfigure}{0.34\textwidth}
      \centering    
      \includegraphics[width=\textwidth]{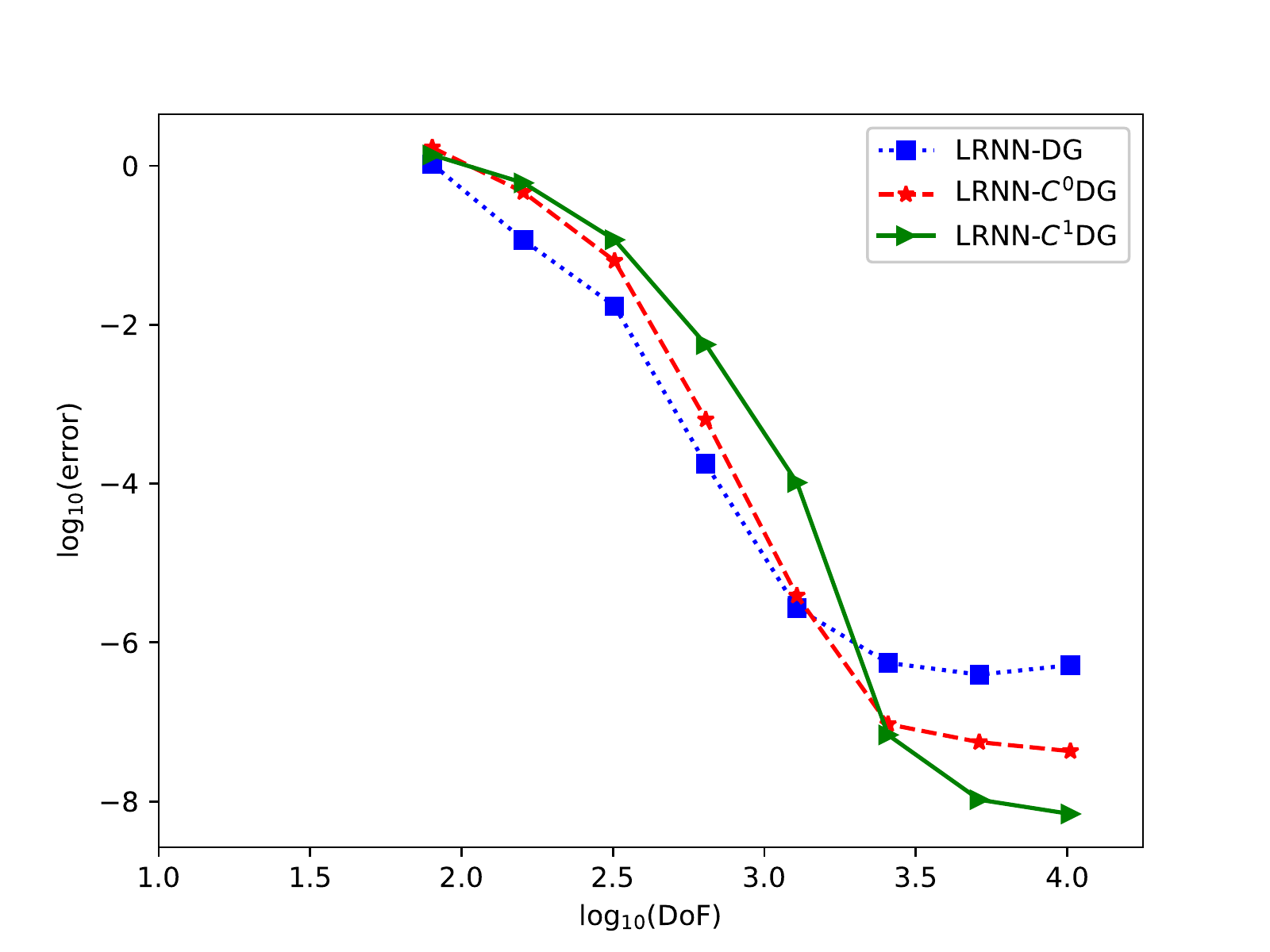}  
		 \caption{$L^2$ errors with h=$2^{-2}$}
      \label{fig:c}
    \end{subfigure}  
    \begin{subfigure}{0.34\textwidth}
      \centering    
      \includegraphics[width=\textwidth]{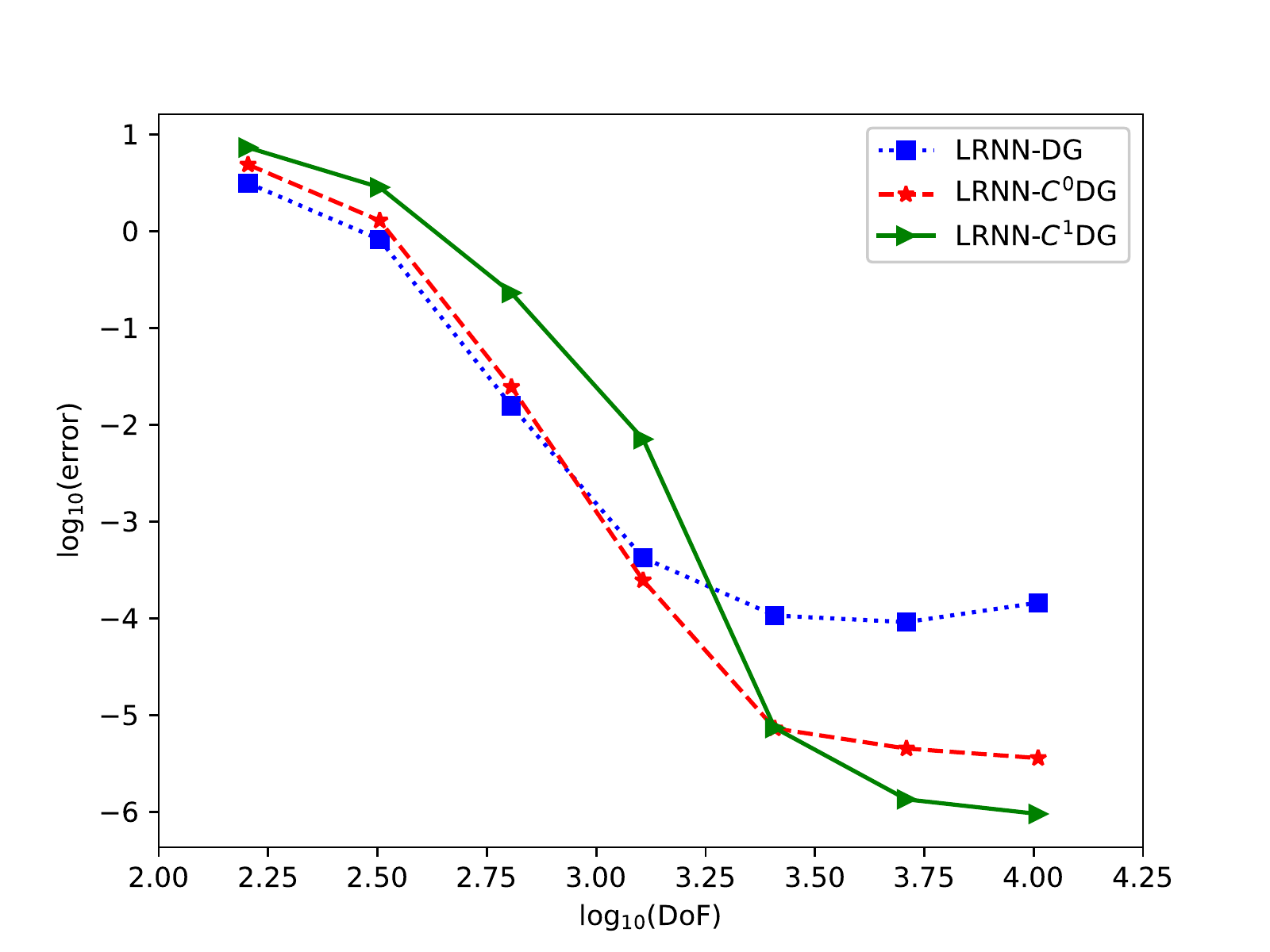}  
		 \caption{$H^1$ errors with h=$2^{-2}$}
      \label{fig:d}
    \end{subfigure} 
  \caption{Errors obtained from three different methods in Example \ref{2dpoisson}.} 
    \label{figure2dpoissonthree}
\end{figure}

\begin{figure}[htb] 
  \centering  
   
  \begin{subfigure}{0.34\textwidth}
      \centering      
      \includegraphics[width=\textwidth]{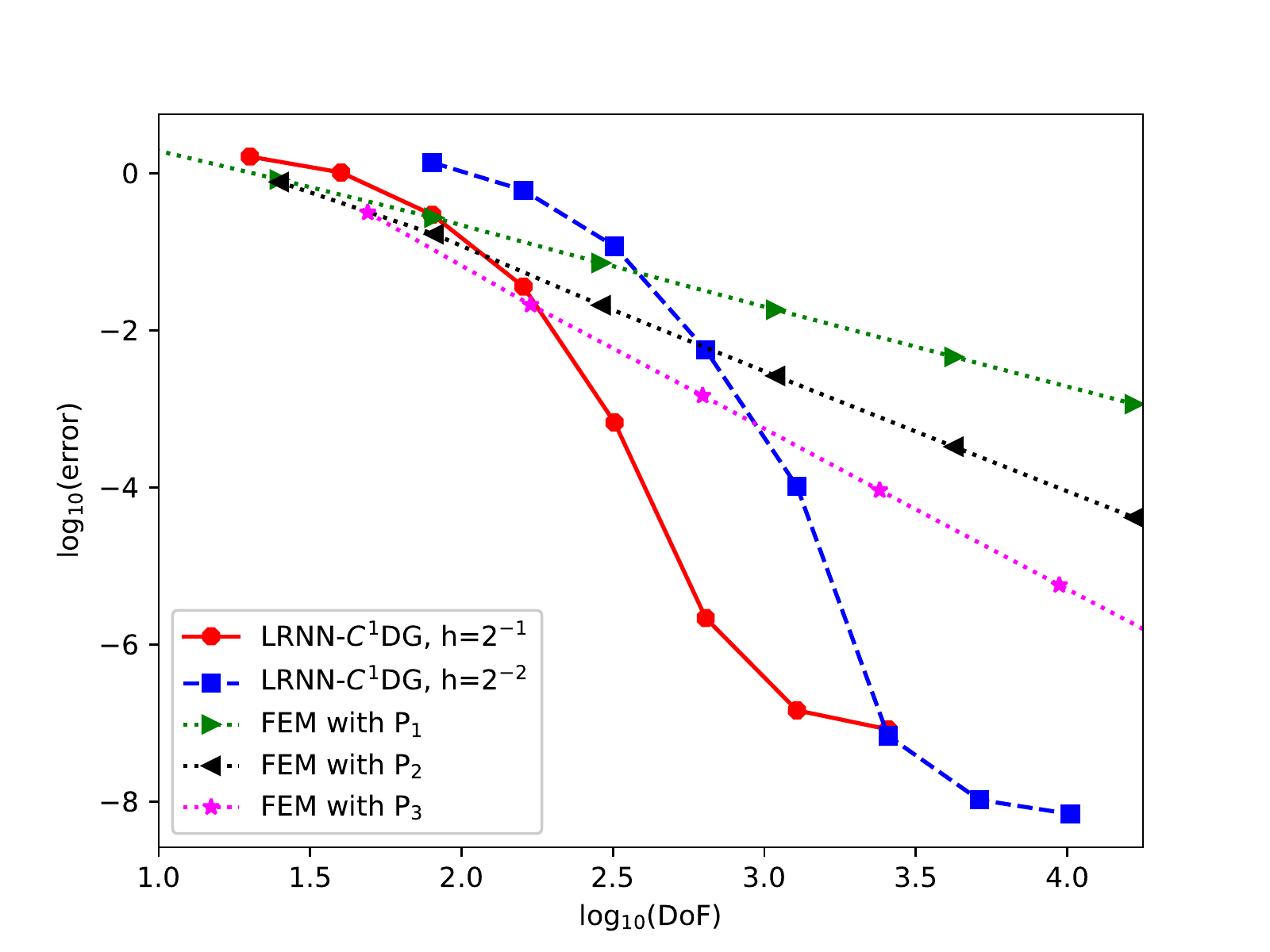}  
      \caption{$L^2$ errors by FEM and LRNN-$C^1$DG}
      \label{fig:a}
  \end{subfigure}
  \begin{subfigure}{0.34\textwidth}
      \centering      
      \includegraphics[width=\textwidth]{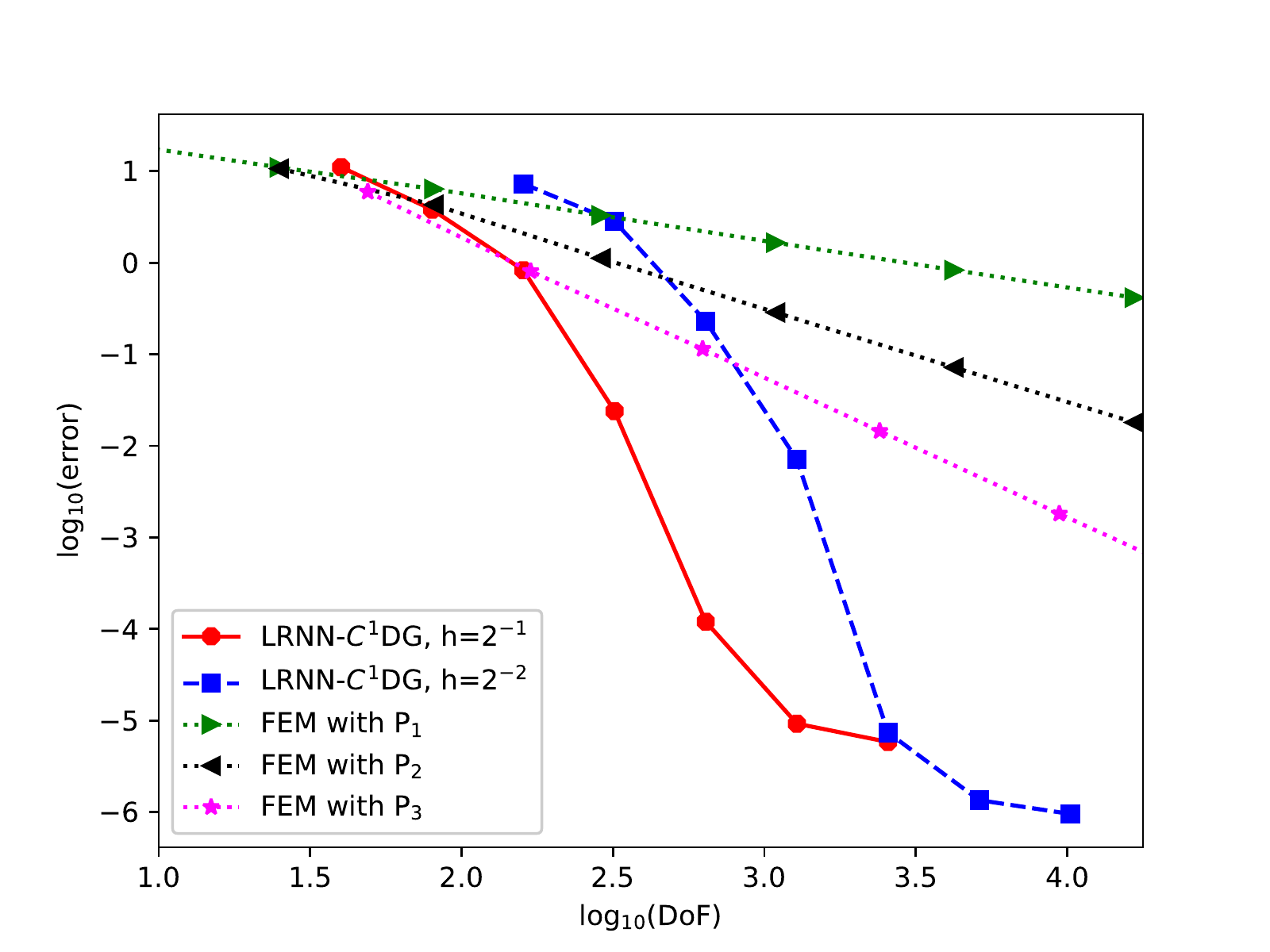}  
      \caption{$H^1$ errors by FEM and LRNN-$C^1$DG}
      \label{fig:b}
  \end{subfigure}\\
  \begin{subfigure}{0.34\textwidth}
      \centering    
      \includegraphics[width=\textwidth]{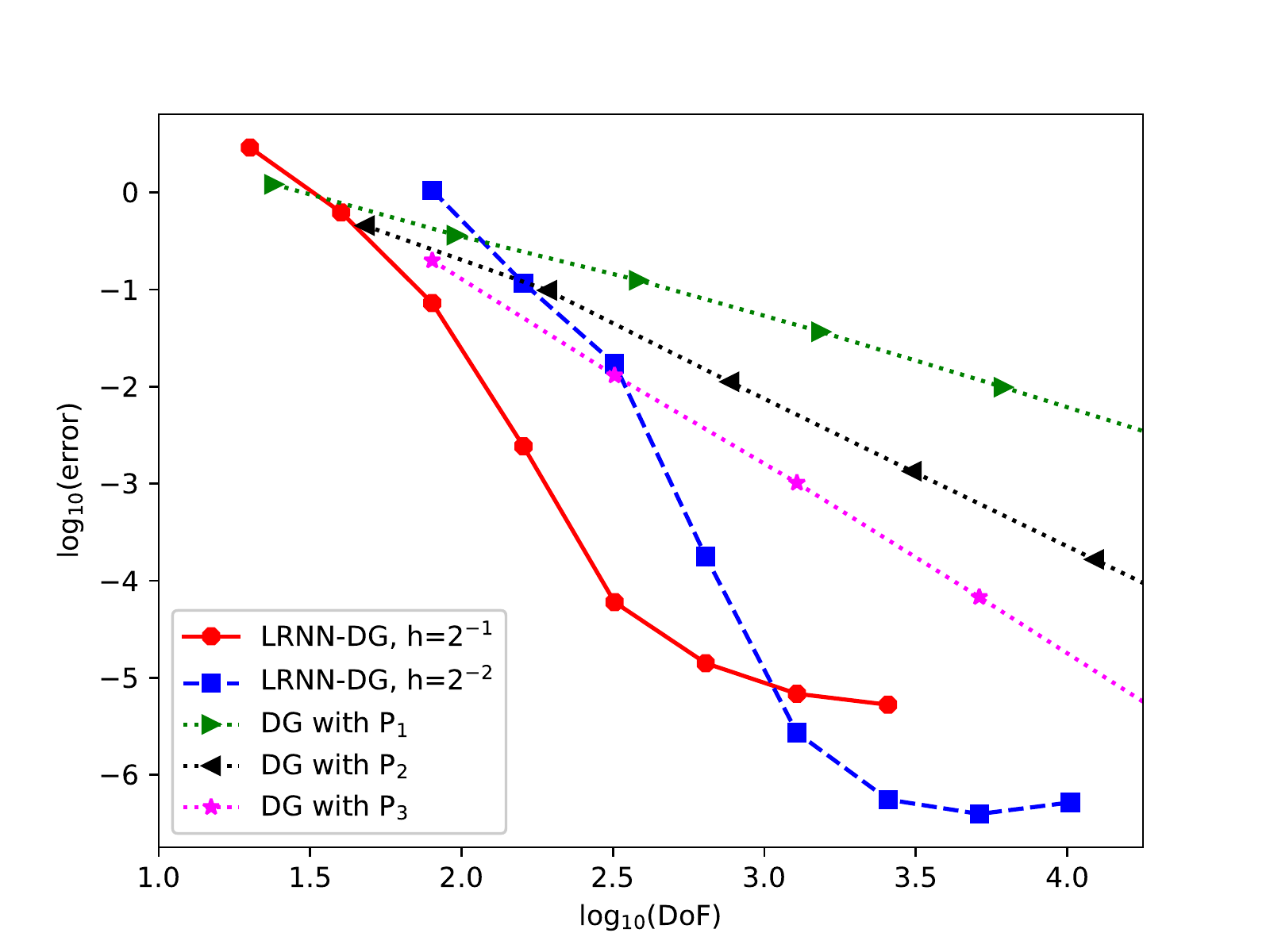}  
		 \caption{$L^2$ errors by DG and LRNN-DG}
      \label{fig:c}
    \end{subfigure}  
   \begin{subfigure}{0.34\textwidth}
      \centering    
      \includegraphics[width=\textwidth]{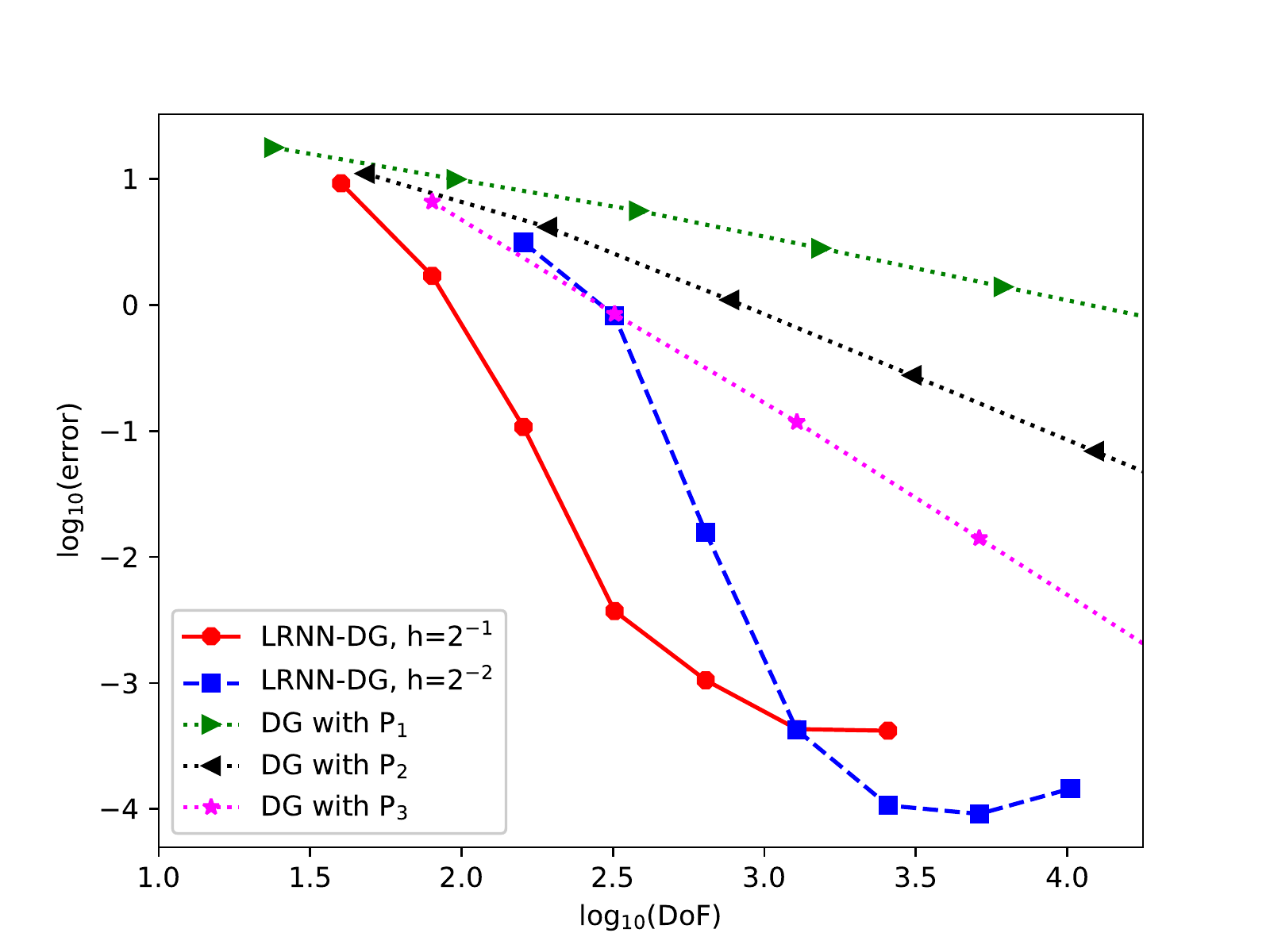}  
		 \caption{$H^1$ errors by DG and LRNN-DG}
      \label{fig:d}
    \end{subfigure}  
  \caption{Comparison of the errors obtained by FEM, DG and LRNN with DG methods in Example \ref{2dpoisson}.}  
  \label{figure2dpoissonoldnew}
\end{figure}

\begin{figure}[htb]  
  \centering   
   
  \begin{subfigure}{0.4\textwidth}
      \centering      
      \includegraphics[width=0.8\textwidth,height=0.24\textheight]{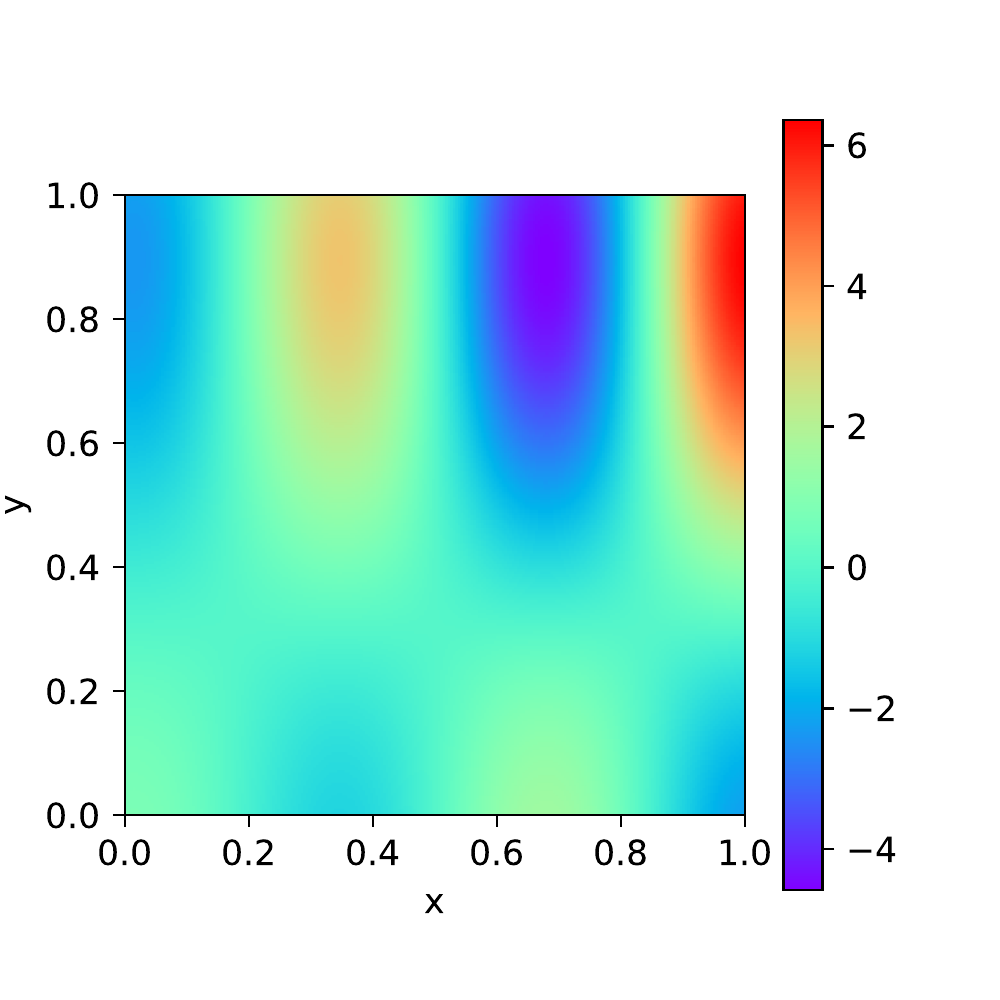}  
      \caption{The exact solution.}
      \label{fig:a}
  \end{subfigure}
\begin{subfigure}{0.4\textwidth}
      \centering      
      \includegraphics[width=0.8\textwidth,height=0.24\textheight]{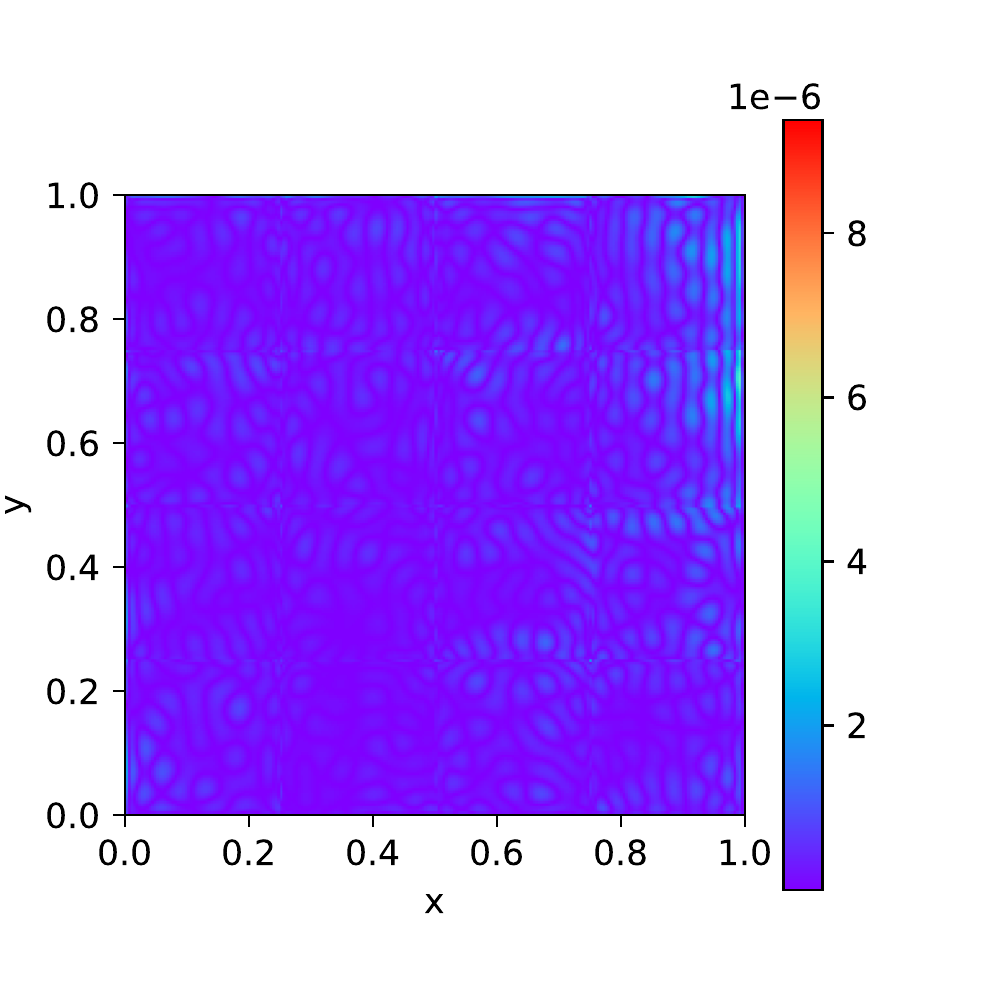}  
      \caption{The absolute error for LRNN-DG method.}
      \label{fig:b}
  \end{subfigure}\\

  \begin{subfigure}{0.4\textwidth}
      \centering    
      \includegraphics[width=0.8\textwidth,height=0.24\textheight]{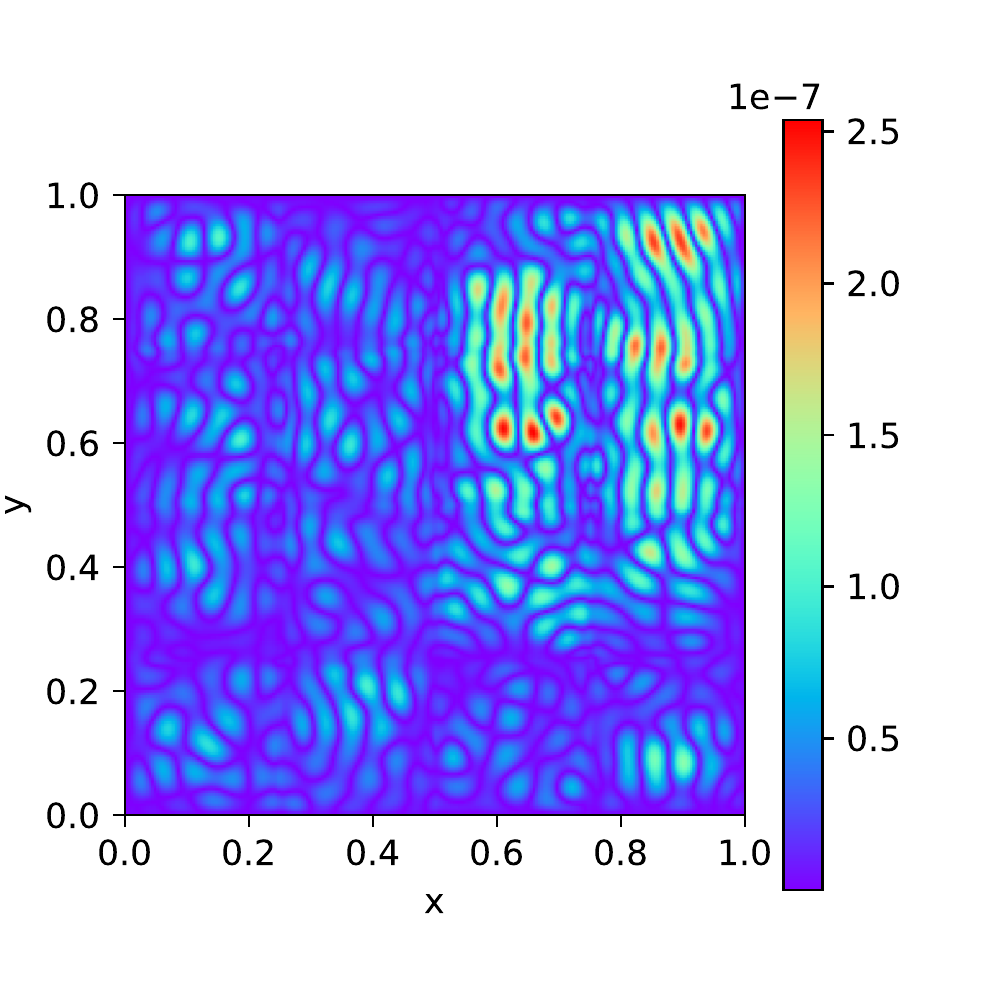}  
		 \caption{The absolute error for LRNN-$C^0$DG method.}
      \label{fig:c}
    \end{subfigure}  
\begin{subfigure}{0.4\textwidth}
      \centering      
      \includegraphics[width=0.8\textwidth,height=0.24\textheight]{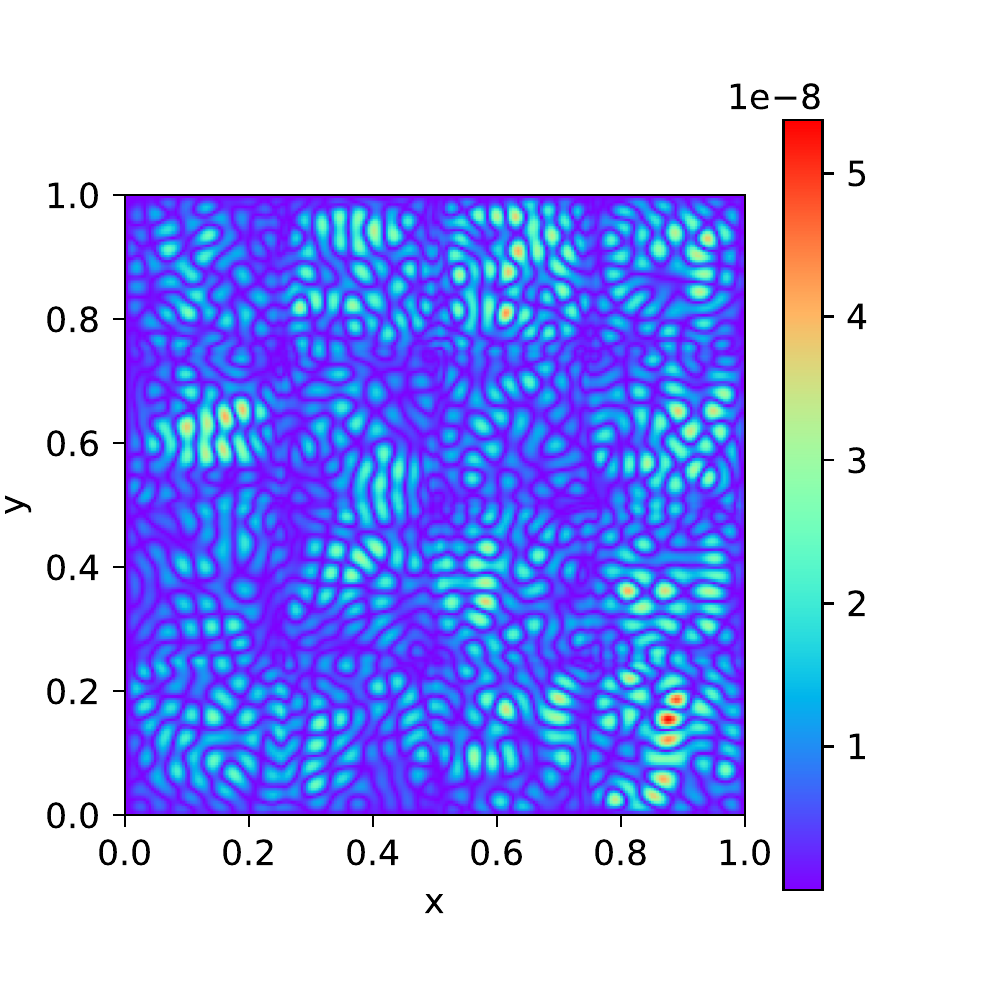}  
      \caption{The absolute error for LRNN-$C^1$DG method.}
      \label{fig:d}
  \end{subfigure} 
  \caption{Exact solution and absolute errors computed by three methods with $h=2^{-2}$ in Example \ref{2dpoisson}.}  
  \label{figure2dpoissonerror}
\end{figure}

Table \ref{table2dpoissonlrnndg} records the errors of LRNN-DG in terms of degrees of freedom on each element and the size of the element. In this group of tests, the weight/bias coefficients in the hidden layer of each local network are set to uniform random values generated in [-1, 1]. We can see that errors in both the $L^2$ norm and the $H^1$ seminorm decrease along with the increase of ${\rm Dof}_K$ fast firstly and then slowly for the fixed $h$. We also observe that if we reduce the size $h$, the errors decrease for the fixed ${\rm Dof}_K$.

Table \ref{table2dpoissonlrnnc0dg} and Table \ref{table2dpoissonlrnnc1dg} show the errors of LRNN-$C^0$DG and LRNN-$C^1$DG in terms of the number of degrees of freedom on each element and the size of the element, respectively. The parameters $w_0$ of the uniform distribution are chosen to equal 0.63 and 1.29 for the two methods, respectively. The trends observed here for LRNN-$C^0$DG and LRNN-$C^0$DG are similar to that of the LRNN-DG method.

Figure \ref{figure2dpoissonthree} compares the errors of the three methods for different sizes $h$ and different norms. We can see that LRNN-$C^1$DG is better than LRNN-$C^0$DG and LRNN-$C^0$DG is better than LRNN-DG.

Figure \ref{figure2dpoissonoldnew} shows a comparison between the finite element method and the current LRNN-$C^1$DG method, and between the usual discontinuous Galerkin method (based on polynomials) and the current LRNN-DG method.
Here $P_k$ denotes the $k$th-order piecewise continuous or discontinuous polynomial functions. We observe that the errors of the proposed methods are much smaller than those of the DG method and FEM with the same DoF. However, when the DoF increases to a certain value, the errors of the current methods appear to stagnate. This phenomenon may be due to the possibility that the basis functions $\{\phi^{K}_j(\bx): j = 1,2, \cdots, M\}$ can tend to be linearly dependent as $M$ becomes larger, which leads to a rank deficient linear system.
 One way to reduce the error is to increase the number of the subdomains, i.e., using elements with a smaller size $h$. Another approach is to design better neural networks that can provide improved basis functions. This is one aspect we will further explore in future work.

Figure \ref{figure2dpoissonerror} shows distributions of the point-wise absolute error computed using the three methods with the element size $h = 2^{-2}$ and the number of ${\rm Dof}_K$ in each element 320. We can see the absolute error of LRNN-DG is larger but smoother. The absolute error of LRNN-$C^0$DG and LRNN-$C^1$DG are smaller but have a larger amplitude.

 Table~\ref{test_assump} provides some numerical evidence about the reasonableness of the Assumption \ref{assump_approC0} and Assumption \ref{assump_approC1}. Here, we have performed a test to examine how the jump of the numerical solution $u_h$ and its gradient $\nabla u_h$ vary as the number of collocation points increases with the LRNN-$C^1$DG method.
 We consider a vertical interior edge and a horizontal interior edge and compute the $L^2(e)$-norm of $\llbracket u_h \rrbracket$ and $[\nabla u_h]$ on these edges. We also consider a vertical boundary edge and a horizontal boundary edge and compute the $L^2(e)$-norm of $u_h -g$ on these boundary edges. In these tests, the width of each local neural network is fixed at 320 and the size of each element is $h=2^{-2}$. Table \ref{test_assump} lists these errors corresponding to a set of collocation points.
 It is evident that $\|\llbracket u_h \rrbracket\|_{0,e}$, $\|[\nabla u_h]\|_{0,e}$ and $\|u_h - g\|_{0,e}$ decrease rapidly with increasing number of collection points $N_e$, reaching a level close to the machine zero as $N_e$ becomes large.

\begin{table}[h]
\centering
\begin{tabular}{|l|lll|lll|}
\hline
\multicolumn{1}{|c|}{\multirow{2}{*}{$N_e$}} & \multicolumn{3}{c|}{Vertical Edge}                                                                                        & \multicolumn{3}{c|}{Horizontal Edge}                                                                                      \\ \cline{2-7} 
\multicolumn{1}{|c|}{}                                   & \multicolumn{1}{c|}{$\|\llbracket u_h \rrbracket\|_{0,e}$} & \multicolumn{1}{c|}{$\|[\nabla u_h]\|_{0,e}$} & \multicolumn{1}{c|}{$\|u_h - g\|_{0,e}$} & \multicolumn{1}{c|}{$\|\llbracket u_h \rrbracket\|_{0,e}$} & \multicolumn{1}{c|}{$\|[\nabla u_h]\|_{0,e}$} & \multicolumn{1}{c|}{$\|u_h - g\|_{0,e}$} \\ \hline
5                                                        & \multicolumn{1}{l|}{1.73E-05}                    & \multicolumn{1}{l|}{4.52E-04}       & 1.53E-05                         & \multicolumn{1}{l|}{6.18E-05}                    & \multicolumn{1}{l|}{1.47E-03}       & 3.93E-05                         \\ \hline
10                                                       & \multicolumn{1}{l|}{3.31E-08}                    & \multicolumn{1}{l|}{3.01E-06}       & 2.08E-08                         & \multicolumn{1}{l|}{5.95E-08}                    & \multicolumn{1}{l|}{2.34E-06}       & 3.49E-08                         \\ \hline
20                                                       & \multicolumn{1}{l|}{3.00E-11}                    & \multicolumn{1}{l|}{1.89E-09}       & 5.61E-11                         & \multicolumn{1}{l|}{1.62E-11}                    & \multicolumn{1}{l|}{1.42E-09}       & 1.25E-10                         \\ \hline
40                                                       & \multicolumn{1}{l|}{2.34E-13}                    & \multicolumn{1}{l|}{7.94E-13}       & 2.72E-13                         & \multicolumn{1}{l|}{2.51E-13}                    & \multicolumn{1}{l|}{8.84E-13}       & 4.90E-13                         \\ \hline
80                                                       & \multicolumn{1}{l|}{1.62E-13}                   & \multicolumn{1}{l|}{5.59E-13}       & 2.51E-13                         & \multicolumn{1}{l|}{1.76E-13}                    & \multicolumn{1}{l|}{1.18E-13}       & 1.89E-13                         \\ \hline
\end{tabular}
\caption{$L^2(e)$-norm of $\llbracket u_h \rrbracket$, $[\nabla u_h]$, and $u_h - g$ computed by LRNN-$C^1$DG in Example \ref{2dpoisson}.}
\label{test_assump}
\end{table}

\begin{example}[Heat Equation]
\label{2dheat}
The last example is a heat equation described as follows
\begin{subequations}
\begin{align}
u_t (t, \bx) - \lambda \Delta u (t, \bx) &= f (t, \bx)\quad {\rm in}\ I\times\Omega,\nonumber\\
u(0, \bx) &= u_0( \bx)\quad\ \ {\rm in}\ \Omega,\nonumber\\
u (t, \bx) & = g (t, \bx) \quad {\rm on}\ I \times\partial \Omega,\nonumber
\end{align}
\end{subequations}
where $\Omega=[0,1]$, $I=[0,1]$, the constant
$\lambda = 1$ or $0.001$, $f$ is a source term, and $u_0$ and $g$ are
the initial and boundary conditions.
We employ the manufactured exact solution
\begin{equation}
\begin{aligned}
u = -e^{cos(\pi x+3\pi)+t^2}.\nonumber
\end{aligned}
\end{equation}
\end{example}

\begin{table}[h]
	\centering
\begin{tabular}{|l|l|ll|ll|ll|}
\hline
\multirow{2}{*}{}                & $h$   & \multicolumn{2}{c|}{$2^{-1}$}                                & \multicolumn{2}{c|}{$2^{-2}$}                                & \multicolumn{2}{c|}{$2^{-3}$}                                \\ \cline{2-8} 
                                 & \diagbox[width=5em,trim=l]{${\rm DoF}_\sigma$}{Norm} & \multicolumn{1}{c|}{$L^{2}$}  & \multicolumn{1}{c|}{$H^{1}$} & \multicolumn{1}{c|}{$L^{2}$}  & \multicolumn{1}{c|}{$H^{1}$} & \multicolumn{1}{c|}{$L^{2}$}  & \multicolumn{1}{c|}{$H^{1}$} \\ \hline
\multirow{6}{*}{$\lambda=0.001$} & 20  & \multicolumn{1}{l|}{1.64E-01} & 4.96E-00                     & \multicolumn{1}{l|}{3.32E-02} & 1.88E-00                     & \multicolumn{1}{l|}{1.77E-01} & 2.06E+01                     \\ \cline{2-8} 
                                 & 40  & \multicolumn{1}{l|}{1.98E-02} & 1.10E-00                     & \multicolumn{1}{l|}{2.18E-03} & 2.00E-01                     & \multicolumn{1}{l|}{2.16E-02} & 3.94E-00                     \\ \cline{2-8} 
                                 & 80  & \multicolumn{1}{l|}{1.36E-05} & 1.17E-03                     & \multicolumn{1}{l|}{1.14E-05} & 1.78E-03                     & \multicolumn{1}{l|}{4.56E-06} & 1.42E-03                     \\ \cline{2-8} 
                                 & 160 & \multicolumn{1}{l|}{1.91E-06} & 1.99E-04                     & \multicolumn{1}{l|}{2.74E-07} & 5.80E-05                     & \multicolumn{1}{l|}{1.38E-07} & 6.15E-05                     \\ \cline{2-8} 
                                 & 320 & \multicolumn{1}{l|}{5.46E-07} & 5.98E-05                     & \multicolumn{1}{l|}{1.40E-07} & 3.24E-05                     & \multicolumn{1}{l|}{1.06E-07} & 5.36E-05                     \\ \cline{2-8} 
                                 & 640 & \multicolumn{1}{l|}{4.75E-07} & 5.51E-05                     & \multicolumn{1}{l|}{1.65E-07} & 4.83E-05                     & \multicolumn{1}{l|}{6.96E-08} & 3.86E-05                     \\ \hline
\multirow{6}{*}{$\lambda=1$}     & 20  & \multicolumn{1}{l|}{1.11E-01} & 2.98E-00                     & \multicolumn{1}{l|}{1.18E-01} & 5.67E+00                     & \multicolumn{1}{l|}{5.92E-01} & 5.44E+01                     \\ \cline{2-8} 
                                 & 40  & \multicolumn{1}{l|}{4.56E-03} & 1.77E-01                     & \multicolumn{1}{l|}{2.39E-03} & 2.43E-01                     & \multicolumn{1}{l|}{5.46E-03} & 9.47E-01                     \\ \cline{2-8} 
                                 & 80  & \multicolumn{1}{l|}{1.50E-05} & 1.43E-03                     & \multicolumn{1}{l|}{2.78E-05} & 5.37E-03                     & \multicolumn{1}{l|}{1.62E-05} & 6.27E-03                     \\ \cline{2-8} 
                                 & 160 & \multicolumn{1}{l|}{6.51E-07} & 8.70E-05                     & \multicolumn{1}{l|}{1.05E-07} & 2.90E-05                     & \multicolumn{1}{l|}{4.54E-08} & 2.59E-05                     \\ \cline{2-8} 
                                 & 320 & \multicolumn{1}{l|}{2.56E-07} & 3.78E-05                     & \multicolumn{1}{l|}{5.33E-08} & 1.49E-05                     & \multicolumn{1}{l|}{3.04E-08} & 2.38E-05                     \\ \cline{2-8} 
                                 & 640 & \multicolumn{1}{l|}{1.82E-07} & 3.22E-05                     & \multicolumn{1}{l|}{5.53E-08} & 2.30E-05                     & \multicolumn{1}{l|}{3.34E-08} & 2.77E-05                     \\ \hline

\end{tabular}
\caption{Errors of the space-time LRNN-DG method for heat equation when $t=1$ in Example \ref{2dheat}.}
\label{table2dheatlrnndg}
\end{table}

\begin{table}[h]
	\centering
\begin{tabular}{|l|l|ll|ll|ll|}
\hline
\multirow{2}{*}{}                & $h$   & \multicolumn{2}{c|}{$2^{-1}$}                                & \multicolumn{2}{c|}{$2^{-2}$}                                & \multicolumn{2}{c|}{$2^{-3}$}                                \\ \cline{2-8} 
                                 & \diagbox[width=5em,trim=l]{${\rm DoF}_\sigma$}{Norm} & \multicolumn{1}{c|}{$L^{2}$}  & \multicolumn{1}{c|}{$H^{1}$} & \multicolumn{1}{c|}{$L^{2}$}  & \multicolumn{1}{c|}{$H^{1}$} & \multicolumn{1}{c|}{$L^{2}$}  & \multicolumn{1}{c|}{$H^{1}$} \\ \hline
\multirow{6}{*}{$\lambda=0.001$} & 20  & \multicolumn{1}{l|}{1.08E-01} & 2.34E-00                     & \multicolumn{1}{l|}{3.60E-02} & 1.36E-00                     & \multicolumn{1}{l|}{4.92E-02} & 3.11E-00                     \\ \cline{2-8} 
                                 & 40  & \multicolumn{1}{l|}{2.06E-03} & 7.53E-02                     & \multicolumn{1}{l|}{6.79E-04} & 4.80E-02                     & \multicolumn{1}{l|}{1.14E-03} & 1.50E-01                     \\ \cline{2-8} 
                                 & 80  & \multicolumn{1}{l|}{9.77E-06} & 6.08E-04                     & \multicolumn{1}{l|}{1.36E-05} & 1.45E-03                     & \multicolumn{1}{l|}{9.57E-06} & 2.01E-03                     \\ \cline{2-8} 
                                 & 160 & \multicolumn{1}{l|}{6.57E-07} & 6.36E-05                     & \multicolumn{1}{l|}{6.33E-08} & 9.21E-06                     & \multicolumn{1}{l|}{3.39E-08} & 9.84E-06                     \\ \cline{2-8} 
                                 & 320 & \multicolumn{1}{l|}{1.81E-07} & 1.59E-05                     & \multicolumn{1}{l|}{5.60E-08} & 9.98E-06                     & \multicolumn{1}{l|}{2.76E-08} & 8.80E-06                     \\ \cline{2-8} 
                                 & 640 & \multicolumn{1}{l|}{1.16E-07} & 1.03E-05                     & \multicolumn{1}{l|}{3.03E-08} & 5.08E-06                     & \multicolumn{1}{l|}{2.24E-08} & 7.46E-06                     \\ \hline
\multirow{6}{*}{$\lambda=1$}     & 20  & \multicolumn{1}{l|}{3.15E-02} & 6.51E-01                     & \multicolumn{1}{l|}{1.03E-02} & 3.87E-01                     & \multicolumn{1}{l|}{1.66E-02} & 1.16E-00                     \\ \cline{2-8} 
                                 & 40  & \multicolumn{1}{l|}{1.46E-03} & 5.05E-02                     & \multicolumn{1}{l|}{2.63E-04} & 1.78E-02                     & \multicolumn{1}{l|}{5.20E-04} & 6.63E-02                     \\ \cline{2-8} 
                                 & 80  & \multicolumn{1}{l|}{1.23E-05} & 6.84E-04                     & \multicolumn{1}{l|}{4.49E-06} & 5.37E-04                     & \multicolumn{1}{l|}{2.86E-06} & 7.14E-04                     \\ \cline{2-8} 
                                 & 160 & \multicolumn{1}{l|}{2.16E-07} & 1.70E-05                     & \multicolumn{1}{l|}{2.12E-08} & 3.88E-06                     & \multicolumn{1}{l|}{1.25E-08} & 4.15E-06                     \\ \cline{2-8} 
                                 & 320 & \multicolumn{1}{l|}{1.30E-07} & 9.77E-06                     & \multicolumn{1}{l|}{1.90E-08} & 3.38E-06                     & \multicolumn{1}{l|}{9.82E-09} & 3.35E-06                     \\ \cline{2-8} 
                                 & 640 & \multicolumn{1}{l|}{1.49E-07} & 1.10E-05                     & \multicolumn{1}{l|}{2.62E-08} & 4.51E-06                     & \multicolumn{1}{l|}{1.23E-08} & 4.31E-06                     \\ \hline
\end{tabular}
\caption{Errors of the space-time LRNN-$C^0$DG method for heat equation when $t=1$ in Example \ref{2dheat}.}
\label{table2dheatlrnnc0dg}
\end{table}

\begin{table}[h]
	\centering
\begin{tabular}{|l|l|ll|ll|ll|}
\hline
\multirow{2}{*}{}                & $h$   & \multicolumn{2}{c|}{$2^{-1}$}                                & \multicolumn{2}{c|}{$2^{-2}$}                                & \multicolumn{2}{c|}{$2^{-3}$}                                \\ \cline{2-8} 
                                 & \diagbox[width=5em,trim=l]{${\rm DoF}_\sigma$}{Norm} & \multicolumn{1}{c|}{$L^{2}$}  & \multicolumn{1}{c|}{$H^{1}$} & \multicolumn{1}{c|}{$L^{2}$}  & \multicolumn{1}{c|}{$H^{1}$} & \multicolumn{1}{c|}{$L^{2}$}  & \multicolumn{1}{c|}{$H^{1}$} \\ \hline
\multirow{6}{*}{$\lambda=0.001$} & 20  & \multicolumn{1}{l|}{6.05E-02} & 9.28E-01                     & \multicolumn{1}{l|}{4.22E-02} & 1.06E-00                     & \multicolumn{1}{l|}{6.54E-02} & 2.73E-00                     \\ \cline{2-8} 
                                 & 40  & \multicolumn{1}{l|}{3.05E-03} & 8.06E-02                     & \multicolumn{1}{l|}{1.21E-03} & 5.39E-02                     & \multicolumn{1}{l|}{1.20E-03} & 1.03E-01                     \\ \cline{2-8} 
                                 & 80  & \multicolumn{1}{l|}{2.39E-05} & 9.89E-04                     & \multicolumn{1}{l|}{8.41E-06} & 6.43E-04                     & \multicolumn{1}{l|}{5.27E-06} & 7.91E-04                     \\ \cline{2-8} 
                                 & 160 & \multicolumn{1}{l|}{5.87E-07} & 3.23E-05                     & \multicolumn{1}{l|}{1.03E-07} & 1.02E-05                     & \multicolumn{1}{l|}{6.13E-08} & 1.24E-05                     \\ \cline{2-8} 
                                 & 320 & \multicolumn{1}{l|}{2.65E-07} & 1.50E-05                     & \multicolumn{1}{l|}{7.48E-08} & 8.36E-06                     & \multicolumn{1}{l|}{3.96E-08} & 8.99E-06                     \\ \cline{2-8} 
                                 & 640 & \multicolumn{1}{l|}{1.54E-07} & 9.48E-06                     & \multicolumn{1}{l|}{7.24E-08} & 8.51E-06                     & \multicolumn{1}{l|}{5.21E-08} & 1.22E-05                     \\ \hline
\multirow{6}{*}{$\lambda=1$}     & 20  & \multicolumn{1}{l|}{4.36E-02} & 8.14E-01                     & \multicolumn{1}{l|}{1.71E-02} & 5.36E-01                     & \multicolumn{1}{l|}{3.65E-02} & 1.87E-00                     \\ \cline{2-8} 
                                 & 40  & \multicolumn{1}{l|}{2.26E-03} & 6.79E-02                     & \multicolumn{1}{l|}{7.93E-04} & 4.46E-02                     & \multicolumn{1}{l|}{1.24E-03} & 1.24E-01                     \\ \cline{2-8} 
                                 & 80  & \multicolumn{1}{l|}{2.05E-05} & 1.12E-03                     & \multicolumn{1}{l|}{1.03E-05} & 1.09E-03                     & \multicolumn{1}{l|}{7.41E-06} & 1.57E-03                     \\ \cline{2-8} 
                                 & 160 & \multicolumn{1}{l|}{2.00E-07} & 1.64E-05                     & \multicolumn{1}{l|}{3.28E-08} & 5.33E-06                     & \multicolumn{1}{l|}{2.70E-08} & 7.33E-06                     \\ \cline{2-8} 
                                 & 320 & \multicolumn{1}{l|}{1.14E-07} & 8.23E-06                     & \multicolumn{1}{l|}{2.23E-08} & 4.12E-06                     & \multicolumn{1}{l|}{1.54E-08} & 5.19E-06                     \\ \cline{2-8} 
                                 & 640 & \multicolumn{1}{l|}{7.08E-08} & 6.86E-06                     & \multicolumn{1}{l|}{4.14E-08} & 7.21E-06                     & \multicolumn{1}{l|}{2.42E-08} & 8.28E-06                     \\ \hline
\end{tabular}
\caption{Errors of the space-time LRNN-$C^1$DG method for heat equation when $t=1$ in Example \ref{2dheat}.}
\label{table2dheatlrnnc1dg}
\end{table}

\begin{figure}[htb]   
    \centering

\begin{subfigure}{0.4\textwidth}
            \centering           
            \includegraphics[width=0.8\textwidth,height=0.24\textheight]{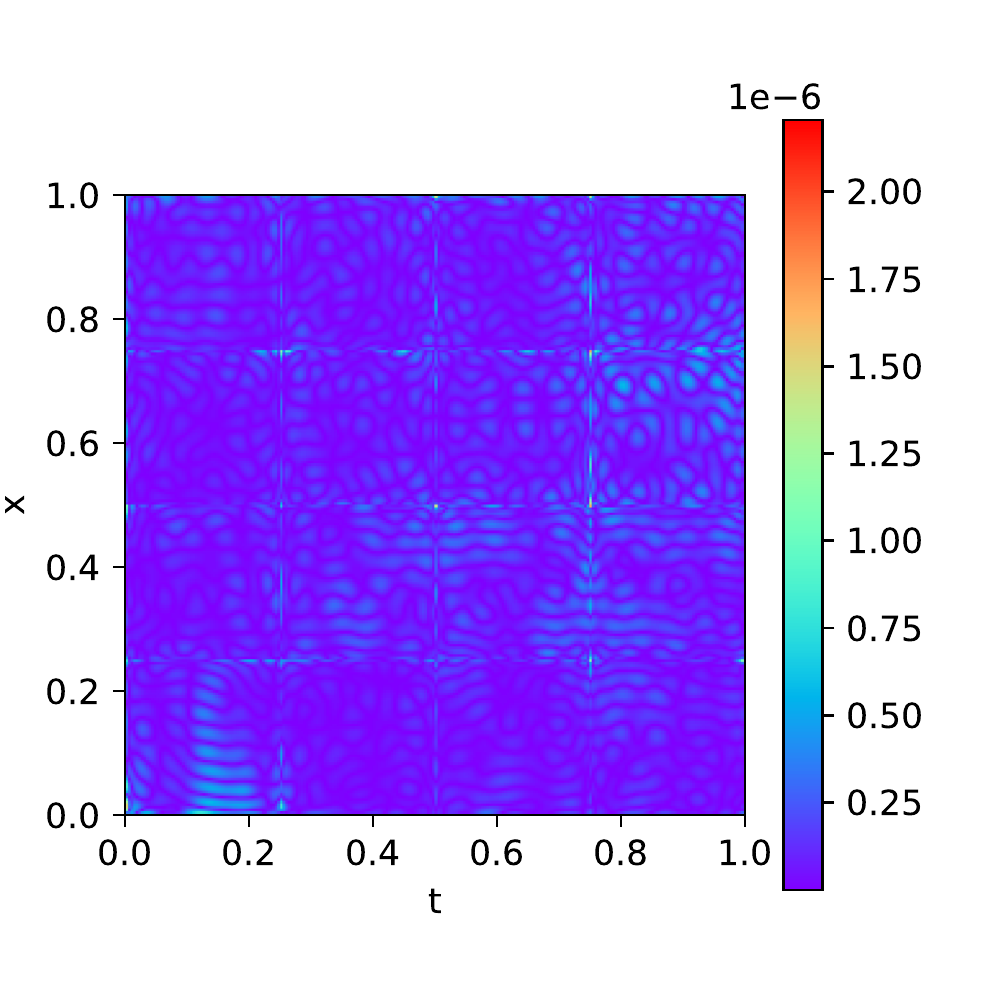}   
            \caption{The absolute error for LRNN-DG method.}
            \label{fig:a}
    \end{subfigure}
    \begin{subfigure}{0.4\textwidth}
            \centering       
            \includegraphics[width=0.8\textwidth,height=0.24\textheight]{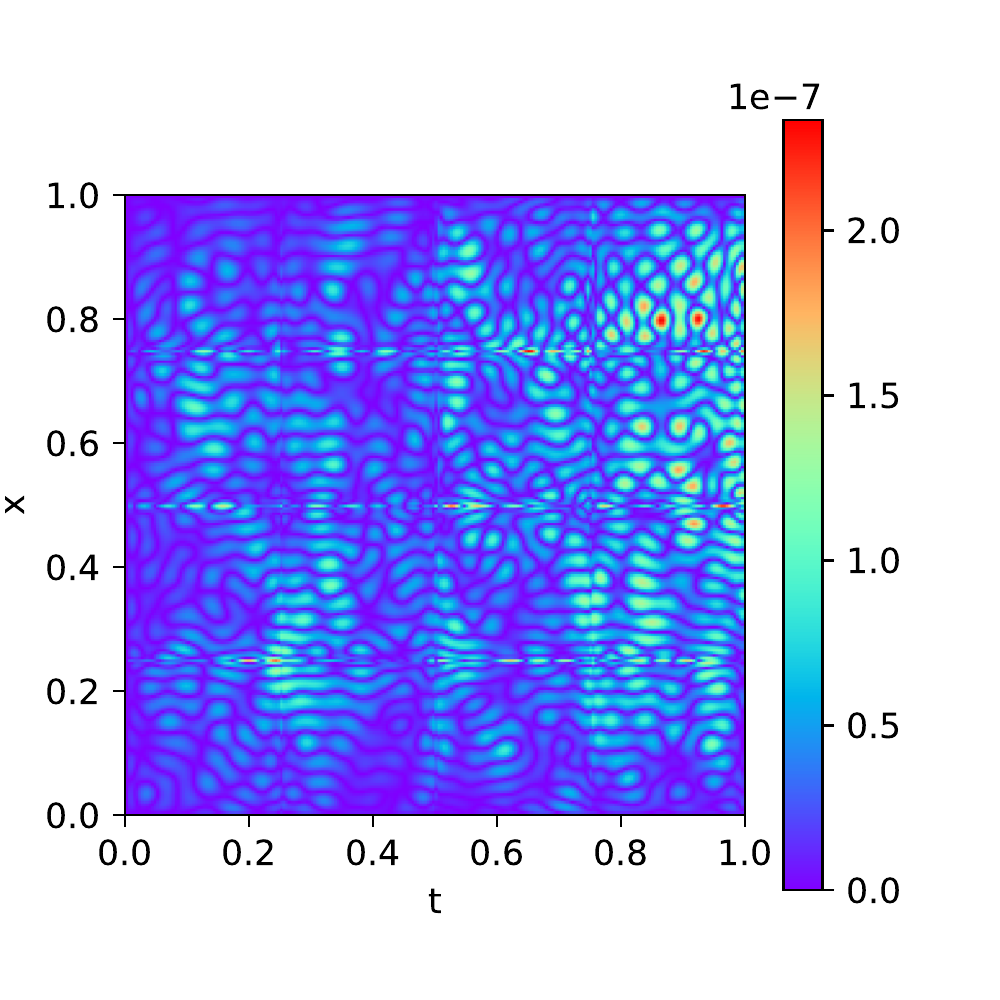}   
		 \caption{The absolute error for LRNN-$C^0$DG method.}
            \label{fig:b}
        \end{subfigure}  \\
        
\begin{subfigure}{0.4\textwidth}
            \centering           
            \includegraphics[width=0.8\textwidth,height=0.24\textheight]{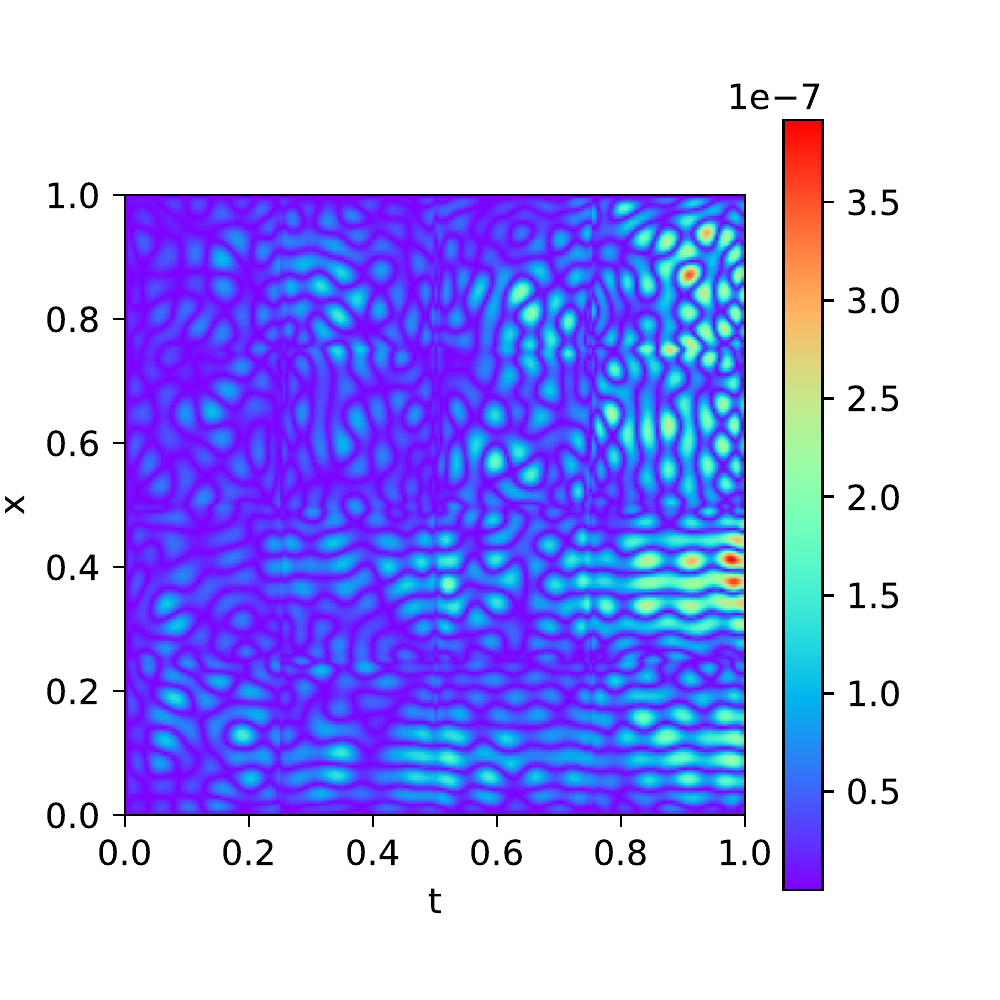}   
            \caption{The absolute error for LRNN-$C^1$DG method.}
            \label{fig:c}
    \end{subfigure} 
\begin{subfigure}{0.4\textwidth}
            \centering           
            \includegraphics[width=0.8\textwidth,height=0.24\textheight]{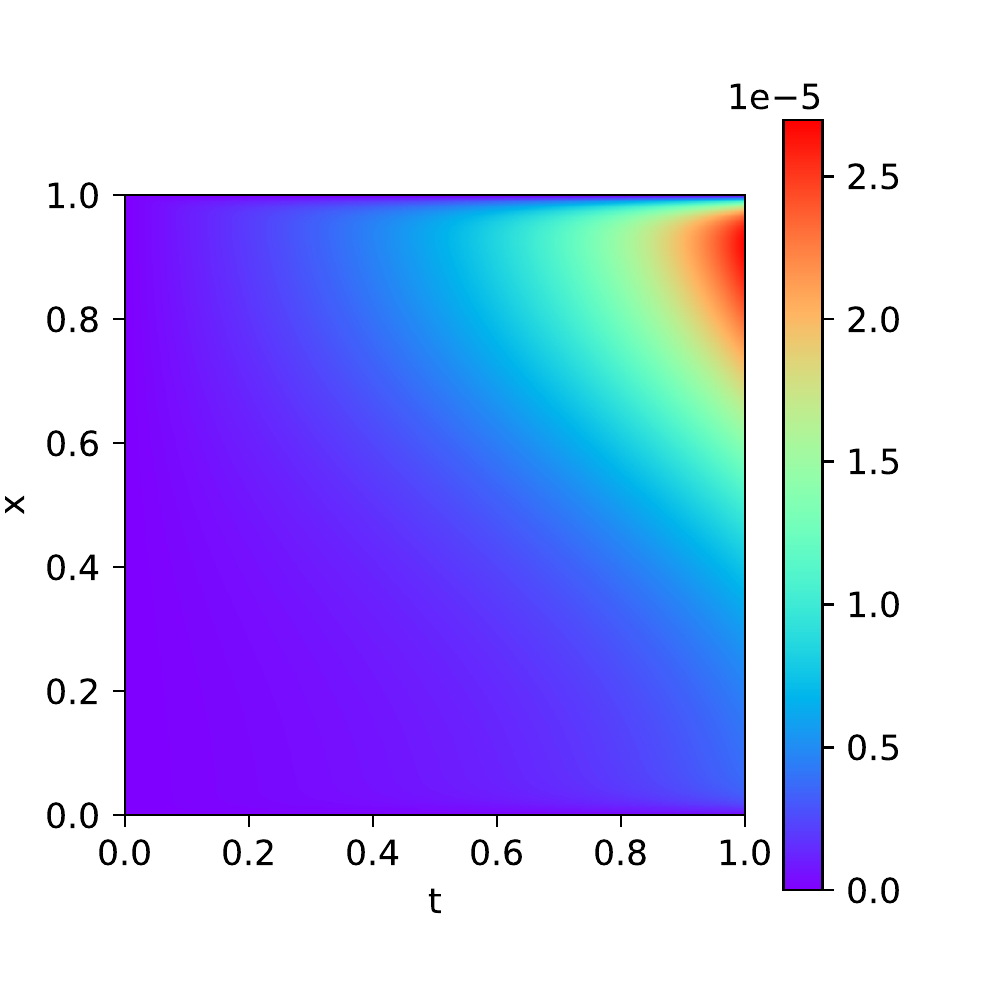}   
            \caption{The absolute error for FEM with back Euler.}
            \label{fig:d}
    \end{subfigure}
\caption{Absolute errors computed by proposed methods and FEM for $\lambda=0.001$ in Example \ref{2dheat}.} 
        \label{figure2dheaterror}
\end{figure}

Let us consider the space-time LRNN with DG methods for solving the heat equation.
The numerical errors measured in the $L^2$ norm and the $H^1$ seminorm at $t=1$ for the different numbers of degrees of freedom are shown in Tables \ref{table2dheatlrnndg}, Tables \ref{table2dheatlrnnc0dg} and Tables \ref{table2dheatlrnnc1dg}. Here, we partition the domain $I\times\Omega$ into some non-overlapping subdomains $\sigma=I_i\times K$ where $I_i$ is a time interval, $K$ is a space interval, and the time interval and space interval have the same size $h$. The number of collection points on each edge used in LRNN-$C^0$DG and LRNN-$C^1$DG is 70.

Table \ref{table2dheatlrnndg} records the $L^2$ errors and the $H^1$ errors of the space-time LRNN-DG at $t=1$ in terms of the number of degrees of freedom in each element and the size of the element. In this group of tests, the penalty parameter is chosen as $\eta$ = 10, 8 for $\lambda$ = 0.001, 1, respectively. The weight/bias coefficients in the hidden layer of each local network are set to be uniform random values generated from $[-1.5, 1.5]$ for both $\lambda= 0.001$ and $\lambda = 1$.
Table \ref{table2dheatlrnnc0dg} records errors of the space-time LRNN-$C^0$DG at $t=1$. The parameter $w_0$ is set to be 1 and 1.25 for $\lambda=0.001$ and $\lambda=1$ respectively. 
Table \ref{table2dheatlrnnc1dg} records errors of the space-time LRNN-$C^1$DG for $t=1$, and the parameter $w_0$ is chosen as 1.1 for both $\lambda$ = 0.001 and $\lambda$ = 1.

For a comparison with the current methods, we have also solved this problem
  using the traditional finite element method. With FEM, we employ the $P_2$ finite element for spatial discretization and the backward Euler scheme for time stepping.  The numerical errors are shown in Table \ref{table2dheatfem} for $t=1$ and $\lambda=0.001$. Here, $h$ is the size of the mesh and $\Delta t$ is the size of the time step. By comparing the FEM data in this table and those of the current methods, we can see that space-time LRNN-DG methods can achieve more accurate numerical solutions. 

\begin{table}[h]
\centering
\begin{tabular}{|c|c|c|c|c|c|}
\hline
\diagbox[width=5em,trim=l]{Norm}{$h$, $\Delta t$}  & {$2^{-5}$, $2^{-10}$} & {$2^{-6}$, $2^{-12}$} & {$2^{-7}$, $2^{-14}$} & {$2^{-8}$, $2^{-16}$} & {$2^{-9}$, $2^{-18}$} \\ \hline
$L^2$      & 3.82E-03            & 9.56E-04           & 2.39E-04           & 5.97E-05           & 1.49E-05           \\ \hline
$H^1$      & 3.84E-02            & 9.65E-03           & 2.41E-03           & 6.04E-04           & 1.51E-04           \\ \hline
\end{tabular}
\caption{Errors of the $P_2$ FEM with back Euler for heat equation when $\lambda=0.001$ and $t=1$ in Example \ref{2dheat}.}
\label{table2dheatfem}
\end{table}

Figure \ref{figure2dheaterror} shows distributions of the point-wise absolute errors in the spatial-temporal domain obtained using the current methods and the traditional FEM for the case $\lambda=0.001$. 
  In the space-time LRNN with DG methods, the size of each element $h = 2^{-2}$ and the number of ${\rm DoF}_\sigma$ in each element is 320. In the $P_2$ FEM with the backward Euler scheme, the size of mesh $h=2^{-9}$ and the size of the time step $\Delta t = 2^{-18}$. We can see the absolute error of the LRNN-DG is larger but smoother. The absolute error of LRNN-$C^0$DG and LRNN-$C^1$DG are smaller but have strong amplitudes. Due to the time marching, the error accumulation over time is evident from the FEM distribution. Unlike the traditional methods, there is little or essentially no error accumulation  in the proposed space-time LRNN with DG methods.

\section{Summary}\label{sec:summary}
Local randomized neural networks with discontinuous Galerkin formulations provide a new framework for solving partial differential equations. With the decomposition of the domain, we use LRNNs to approximate the solution of partial differential equations on each subdomain, and apply the IPDG scheme to couple these LRNNs together. Then we obtain the weights of output layers by the least-squares method. Under certain proper assumptions, we prove the convergence of the proposed methods. Furthermore, we propose space-time LRNN-DG methods for solving the heat equation. The proposed methods have the following advantages: (i) the accuracy of the proposed methods is better than the FEM or the usual DG method with respect to the number of degrees of freedom; (ii) compared to DG methods, the LRNN-$C^0$DG method and LRNN-$C^1$DG method are free of penalty parameters; (iii) the proposed methods can solve time-dependent problems by the space-time approach naturally and efficiently.

We believe that the proposed methods have a great potential for solving partial differential equations. However, many aspects of these methods need to be further investigated. We have considered only linear partial differential equations in this paper. The extension of these methods to nonlinear problems is one issue we would like to address in a future study. Numerical examples indicate that the errors of these methods seem stagnant when the number of degrees of freedom reaches a certain threshold. Can one design neural networks to avoid such a situation? Can we exploit parallel processing to improve their efficiencies? How can we use mesh adaptation to improve their performance for complex problems? How do we derive the error estimates for the proposed methods? These are some of the outstanding questions that remain to be explored.

\vspace{5mm}

\noindent{\bf Acknowledgement.} The authors are grateful to Professor Zongben Xu for the valuable discussions and suggestions.

\end{document}